\numberwithin{equation}{section}
\def\rd{\mathrm{d}}
\def\Lip{\operatorname{Lip}}
\theoremstyle{plain}
\newtheorem{theorem}{Theorem}[section]
\newtheorem{proposition}{Proposition}[section]
\newtheorem{lemma}{Lemma}[section]
\newtheorem{corollary}{Corollary}[section]
\newtheorem*{propositionA}{Proposition A.1}
\theoremstyle{definition}
\newtheorem{definition}{Definition}[section]
\newtheorem{assumption}{Assumption}[section]
\newtheorem{example}{Example}[section]
\theoremstyle{remark}
\newtheorem{remark}{Remark}[section]
\journal{Journal of Functional Analysis}
\begin{document}

\begin{frontmatter}

\title{Equivalence of minimax and viscosity solutions of path-dependent Hamilton--Jacobi equations}

\cortext[cor1]{Corresponding author}

\author[aff1,aff2]{M.I. Gomoyunov\corref{cor1}}
\ead{m.i.gomoyunov@gmail.com}
\author[aff1,aff2]{A.R. Plaksin}
\ead{a.r.plaksin@gmail.com}

\affiliation[aff1]{organization={N.N. Krasovskii Institute of Mathematics and Mechanics of the Ural Branch of the Russian
            Academy of Sciences},
            addressline={16 S. Kovalevskaya Str.},
            city={Ekaterinburg},
            postcode={620108},
            country={Russia}}

\affiliation[aff2]{organization={Ural Federal University},
            addressline={19 Mira Str.},
            city={Ekaterinburg},
            postcode={620002},
            country={Russia}}

\begin{abstract}
    In the paper, we consider a path-dependent Hamilton--Jacobi equation with coinvariant derivatives over the space of continuous functions.
    Such equations arise from optimal control problems and differential games for time-delay systems.
    We study generalized solutions of the considered Hamilton--Jacobi equation both in the minimax and in the viscosity sense.
    A minimax solution is defined as a functional which epigraph and subgraph satisfy certain conditions of weak invariance, while a viscosity solution is defined in terms of a pair of inequalities for coinvariant sub- and supergradients.
    We prove that these two notions are equivalent, which is the main result of the paper.
    As a corollary, we obtain comparison and uniqueness results for viscosity solutions of a Cauchy problem for the considered Hamilton--Jacobi equation and a right-end boundary condition.
    The proof is based on a certain property of the coinvariant subdifferential.
    To establish this property, we develop a technique going back to the proofs of multidirectional mean-value inequalities.
    In particular, the absence of the local compactness property of the underlying continuous function space is overcome by using Borwein--Preiss variational principle with an appropriate guage-type functional.
\end{abstract}

\begin{keyword}
path-dependent Hamilton--Jacobi equations \sep coinvariant derivatives \sep minimax solutions \sep viscosity solutions \sep variational principle

\MSC[2020] 35R15 \sep 35F21 \sep 35D40 \sep 49L25
\end{keyword}

\end{frontmatter}

\section{Introduction}

    In the theory of Hamilton--Jacobi (HJ) equations (nonlinear first-order partial differential equations), several approaches to a notion of a generalized solution are known.
    Among others, the minimax and viscosity approaches are the most developed.

    The minimax approach originates in the positional differential games theory (see, e.g., \cite{Krasovskii_Subbotin_1988,Krasovskii_Krasovskii_1995}) and can be seen as the further development of the unification constructions of differential games \cite{Krasovskii_1976}.
    According to this approach \cite{Subbotin_1984,Subbotin_1995}, a generalized (minimax) solution is defined as a function satisfying a pair of non-local conditions expressing properties of weak invariance of the epigraph and subgraph of the function with respect to so-called characteristic differential inclusions.
    In infinitesimal form, these conditions reduce to inequalities for lower and upper Dini directional derivatives of the function.
    On the other hand, according to the viscosity approach \cite{Crandall_Lions_1983,Crandall_Evans_Lions_1984}, which goes back to the vanishing viscosity method in mathematical physics, a generalized (viscosity) solution is defined as a function satisfying a pair of conditions involving smooth test functions.
    In turn, these conditions are equivalent to inequalities for sub- and supergradients of the function.
    Within both approaches, various properties of generalized (minimax and viscosity) solutions were obtained.
    In particular, these results were shown to be a useful tool in the study of optimal control problems and differential games.

    Despite the fact that the notions of minimax and viscosity solutions have different origins, it was proved that they are equivalent under quite general assumptions.
    This result was established in \cite{Lions_Souganidis_1985,Subbotin_Tarasyev_1986} for the case of Bellman--Isaacs equations and under an additional local Lipschitz continuity condition.
    In a more general setting, a direct proof of this result was given in \cite{Subbotin_1993_MSb} (see also \cite{Subbotin_1991_NA}) on the basis of the following Property of Subdifferential:
    if a lower Dini directional derivative of a function at some point is positive in a convex cone of directions, then, in an arbitrarily small neighbourhood of this point, there exists a point at which the function has a subgradient belonging to the cone dual to the original cone of directions.
    Later, it was observed that this property could be obtained as an infinitesimal version of Multidirectional Mean-Value Inequality \cite{Clarke_Ledyaev_1993_DAN} (in this connection, see also \cite[Section A6]{Subbotin_1995}).

    Note that, in \cite{Clarke_Ledyaev_1994} (see also \cite[Theorem 2.6]{Clarke_Ledyaev_Stern_Wolenski_1998}), Multidirectional Mean-Value Inequality was proved for functionals on a Hilbert space.
    Moreover, an analog of Property of Sub\-dif\-ferential was derived and used, in particular, to establish the equivalence of the minimax and viscosity solutions of HJ equations in Hilbert spaces.
    For further generalizations and more recent developments in this direction, the reader is referred to \cite{Kipka_Ledyaev_2019} and the references therein.
    One of the difficulties that arise in the case of an infinite-dimensional space in comparison with the finite-dimensional setting is related to the absence of the local compactness property.
    Indeed, a lower semi-continuous functional that is bounded from below may not attain its minimum over a closed bounded set.
    This is overcome with the help of smooth variational principles, which assert that a small smooth perturbation can be added to the functional in question so that the perturbed functional attains its minimum.
    In particular, Borwein--Preiss variational principle \cite{Borwein_Preiss_1987} (see also, e.g., \cite[Theorems 2.5.2 and 2.5.3]{Borwein_Zhu_2005}) was used in \cite{Clarke_Ledyaev_1994}, Stegall variational principle \cite{Stegall_1978} (see also, e.g., \cite[Theorem 6.3.5]{Borwein_Zhu_2005}) was used in \cite[Theorem 2.6]{Clarke_Ledyaev_Stern_Wolenski_1998}, and Deville--Godefroy--Zizler variational principle \cite{Deville_Godefroy_Zizler_1993} (see also, e.g., \cite[Theorems 2.5.4 and 2.5.7]{Borwein_Zhu_2005}) was used in \cite{Kipka_Ledyaev_2019}.
    However, a drawback of such smooth variational principles is that they require certain smoothness properties of the norm of the underlying space, which, in particular, does not allow one to directly apply them for functionals on the continuous function space equipped with the uniform norm.
    This circumstance also complicates the development of the theory of infinite-dimensional HJ equations \cite{Crandall_Lions_1985,Crandall_Lions_1986} considered over this space.

    The goal of the preset paper is to prove the equivalence of the minimax and viscosity approaches to a notion of a generalized solution of so-called path-dependent HJ equations, which arise naturally from optimal control problems and differential games for time-delay systems (see, e.g., \cite{Kim_1999,Lukoyanov_2000_JAMM,Lukoyanov_2003_2,Lukoyanov_2010_IMM_Eng_1,Pham_Zhang_2014,Tang_Zhang_2015,Kaise_2015,Bayraktar_Keller_2018,Saporito_2019} and also \cite{Lukoyanov_2011_Eng}).
    Such equations are usually considered over the space of continuous functions, and, instead of the classical Fr\'{e}chet derivatives as in, e.g., \cite{Crandall_Lions_1985,Crandall_Lions_1986}, they involve some special pathwise derivatives of functionals.
    In this paper, we follow \cite{Kim_1999,Lukoyanov_2000_JAMM} (see also \cite{Lukoyanov_2011_Eng}) and use so-called coinvariant ($ci$-) derivatives, while one can also use so-called horizontal and vertical derivatives \cite{Dupire_2009}.
    For a discussion of the relationship between these notions, the reader is referred to \cite[Section 5.2]{Gomoyunov_Lukoyanov_Plaksin_2021}.
    Minimax solutions of path-dependent HJ equations were introduced in \cite{Lukoyanov_2000_JAMM,Lukoyanov_2003_1} and comprehensively studied in, e.g., \cite{Lukoyanov_2010_IMM_Eng_1,Lukoyanov_2011_Eng,Bayraktar_Keller_2018,Gomoyunov_Lukoyanov_Plaksin_2021}.
    On the other hand, in the development of the viscosity approach, the absence of the local compactness property of the continuous function space becomes a more essential problem.
    To overcome this, based on the idea of \cite{Soner_1988}, it was proposed in \cite{Lukoyanov_2007_IMM_Eng} to consider a modified notion of a viscosity solution.
    This modification consists in introducing into the definition a special expanding sequence of compact sets which union is everywhere dense.
    Note also that the role of test functionals in this definition is played by functionals that are smooth in the sense of $ci$-differentiability (rather than Fr\'{e}chet differentiability).
    This technique and some of its variants and generalizations, which also allow one to use compactness arguments in the proofs, were further developed in, e.g., \cite{Lukoyanov_2010_IMM_Eng_1,Kaise_2015,Tang_Zhang_2015,Ekren_Touzi_Zhang_2016_1,Ekren_Touzi_Zhang_2016_2,Zhou_2020_1}.

    However, recently, several papers have appeared \cite{Zhou_2015,Zhou_2018_COCV,Plaksin_2020_JOTA,Zhou_2020_2,Cosso_Gozzi_Rosestolato_Russo_2021,Plaksin_2021_SIAM,Zhou_2021,Zhou_2021_2,Cosso_Russo_2022} in which it is proposed to consider another notions of viscosity solutions of path-dependent HJ equations that are more consistent with the classical case.
    The present paper is also motivated by the same reasons and can be assigned to this area of research.
    Let us shortly describe some of the basic ideas of the cited papers that, in particular, allowed one to overcome the problem with the local compactness property.

    \smallskip

    \noindent (i)
    In \cite{Plaksin_2020_JOTA,Plaksin_2021_SIAM}, it was suggested to move from the space of continuous functions to the wider space of piecewise continuous functions and to deal with a certain class of locally Lipschitz continuous solutions only.
    In application to optimal control problems in time-delay systems, this means the separation of the current value of the state vector and the history, while the Lipschitz condition is expressed in terms of the Euclidean norm of the current state and the integral norm of the history.
    In this case, it was shown that the proof of the uniqueness result for viscosity solutions can be carried out relying on the finite-dimensional optimization technique only.

    \smallskip

    \noindent (ii)
    Similar constructions were developed in \cite{Zhou_2015,Zhou_2018_COCV,Zhou_2021}.
    The key tool used in the proof of the uniqueness result is a so-called left maximization principle in the space of piecewise continuous functions, which is established for functionals satisfying certain growth estimate and uniform continuity property (close to that from item (i)).
    The proof of this principle also uses only finite-dimensional optimization arguments.

    \smallskip

    \noindent (iii)
    A different approach was proposed in \cite{Zhou_2020_2,Cosso_Gozzi_Rosestolato_Russo_2021,Zhou_2021_2,Cosso_Russo_2022}, where features of optimization problems for functionals over the space of continuous functions were taken into account in a direct way.
    The progress is related to the construction of some special smooth (in the sense of $ci$-differentiability or horizontal and vertical differentiability) functionals, which allows one to built smooth gauge-type functionals and apply Borwein--Preiss variational principle from \cite[Theorem 1]{Li_Shi_2000} (see also, e.g., \cite[Theorem 2.5.2]{Borwein_Zhu_2005}).
    In particular, in \cite{Zhou_2020_1}, a smooth functional was found that in some sense equivalent to the square of the uniform norm, and this functional formed the basis of the results of \cite{Zhou_2020_2,Zhou_2021_2} (see also \cite{Gomoyunov_Lukoyanov_Plaksin_2021}).

    \smallskip

    In this paper, we deal with viscosity solutions of the considered path-dependent HJ equation that are defined in terms of a pair of inequalities for so-called coinvariant ($ci$-) sub- and supergradients, which are naturally consistent with the notion of $ci$-differen\-tiability.
    This definition was given in \cite[Section 14]{Lukoyanov_2011_Eng}, but no uniqueness results were established for such a viscosity solution.
    Our main result is that, under certain assumptions, this definition of a viscosity solution is equivalent to that of a minimax solution from, e.g., \cite{Gomoyunov_Lukoyanov_Plaksin_2021}.
    In particular, as a direct corollary, we obtain comparison and uniqueness theorems for viscosity solutions of a Cauchy problem for the considered Hamilton--Jacobi equation and a right-end boundary condition.
    In general, our proof follows the scheme from \cite[Theorem 4.3]{Subbotin_1995}, which was already partially adapted to the path-dependent setting in \cite{Plaksin_2020_JOTA}.
    Namely, we establish a property of the $ci$-subdifferential, which is similar to Property of Subdifferential \cite{Subbotin_1993_MSb}.
    To this end, we apply the technique going back to the use of Multidirectional Mean-Value Inequality \cite{Clarke_Ledyaev_1994}.
    In the spirit of \cite{Cosso_Russo_2022}, the absence of the local compactness is overcome by constructing a guage-type functional, which is carried out on the basis of the functional from \cite{Zhou_2020_1}, and by applying Borwein--Preiss variational principle from \cite[Theorem 1]{Li_Shi_2000}.
    It should be emphasized that the result is proved under an additional assumption concerning continuity properties of the considered solutions, which are close to the Lipschitz condition described above in item (i).

    Note also that the comparison and uniqueness theorems for viscosity solutions of Cauchy problems for path-dependent HJ equations obtained in the present paper are not covered by the results of \cite{Zhou_2015,Plaksin_2020_JOTA,Zhou_2020_2,Cosso_Gozzi_Rosestolato_Russo_2021,Plaksin_2021_SIAM,Zhou_2021}, since, on the one hand, we do not require any growth estimates as well as any uniform continuity or boundedness properties of the solutions we are dealing with, and, on the other hand, we assume a weaker set of conditions on the Hamiltonian and the boundary functional.
    A more detailed discussion of the presented results is given in the main body of the paper.

    The paper is organized as follows.
    In Section \ref{section_Formulation_of_main_result}, we describe the path-dependent HJ equation under consideration, introduce the notions of minimax and viscosity solutions of this equation, and formulate our main result about their equivalence.
    Section \ref{section_Discussion} is devoted to a discussion of this result and some of its corollaries.
    In particular, we derive comparison and uniqueness theorems for viscosity solutions, obtain several criteria for viscosity solutions, and present an application to optimal control problems in time-delay systems, which also implies an existence result for a viscosity solution in the case of path-dependent Bellman equations.
    In Section \ref{section_proof}, we give the proof of the main result.

\section{Formulation of main result}
\label{section_Formulation_of_main_result}

    \subsection{Basic notation}

        Let $n \in \mathbb{N}$, $h > 0$, and $T > 0$ be fixed.
        Let $\mathbb{R}^n$ be the $n$-dimensional Euclidean space with the inner product $\langle \cdot, \cdot \rangle$ and the norm $\|\cdot\|$.
        For every $t \in [0, T]$, let $C([- h, t], \mathbb{R}^n)$ be the Banach space of all continuous functions $x \colon [- h, t] \to \mathbb{R}^n$ equipped with the norm
        \begin{equation} \label{norm}
            \|x(\cdot)\|_{[- h, t]}
            \triangleq \max_{\xi \in [- h, t]} \|x(\xi)\|,
            \quad \forall x(\cdot) \in C([- h, t], \mathbb{R}^n).
        \end{equation}

        Let us consider the set
        \begin{equation*}
            G
            \triangleq \bigcup_{t \in [0, T]} \bigl( \{t\} \times C([- h, t], \mathbb{R}^n) \bigr).
        \end{equation*}
        In other words, $G$ consists of all pairs $(t, x(\cdot))$ with $t \in [0, T]$ and $x(\cdot) \in C([- h, t], \mathbb{R}^n)$.
        The set $G$ is equipped with the metric $\rho_\infty \colon G \times G \to \mathbb{R}$ given by (see, e.g., \cite[Section 5.1]{Gomoyunov_Lukoyanov_Plaksin_2021} and the references therein)
        \begin{equation} \label{rho_infty}
            \rho_\infty \bigl( (t, x(\cdot)), (\tau, y(\cdot)) \bigr)
            \triangleq |t - \tau| + \|x(\cdot \wedge t) - y(\cdot \wedge \tau)\|_{[- h, T]},
            \ \ \forall (t, x(\cdot)), (\tau, y(\cdot)) \in G.
        \end{equation}
        Here and below, we use the notation $a \wedge b \triangleq \min\{a, b\}$ for all $a$, $b \in \mathbb{R}$, and, respectively, for every $(t, x(\cdot)) \in G$, we define the function $x(\cdot \wedge t) \in C([- h, T], \mathbb{R}^n)$ as follows:
        \begin{equation*}
            x(\xi \wedge t)
            \triangleq \begin{cases}
                x(\xi), & \mbox{if } \xi \in [- h, t], \\
                x(t), & \mbox{if } \xi \in (t, T].
            \end{cases}
        \end{equation*}
        The metric space $(G, \rho_\infty)$ is complete due to completeness of the space $C([- h, T], \mathbb{R}^n)$.
        We also introduce the sets
        \begin{equation*}
            G_0
            \triangleq \bigl\{ (t, x(\cdot)) \in G \bigm| t < T \bigr\}
        \end{equation*}
        and
        \begin{equation} \label{G_alpha}
            G(\alpha)
            \triangleq \bigl\{( t, x(\cdot)) \in G \bigm|
            \|x(\cdot)\|_{[- h, t]}
            \leq \alpha \bigr\},
            \quad \forall \alpha > 0.
        \end{equation}

        Along with the metric $\rho_\infty$, we consider the auxiliary metric $\rho_1 \colon G \times G \to \mathbb{R}$ given by
        \begin{equation} \label{rho_1}
            \rho_1 \bigl( (t, x(\cdot)), (\tau, y(\cdot)) \bigr)
            \triangleq |t - \tau| + \|x(t) - y(\tau)\|
            + \int_{- h}^{T} \|x(\xi \wedge t) - y(\xi \wedge \tau)\| \, \rd \xi
        \end{equation}
        for all $(t, x(\cdot))$, $(\tau, y(\cdot)) \in G$.
        Note that
        \begin{equation} \label{rho_1_and_rho_infty}
            \rho_1 \bigl( (t, x(\cdot)), (\tau, y(\cdot)) \bigr)
            \leq (1 + T + h) \rho_\infty \bigl( (t, x(\cdot)), (\tau, y(\cdot)) \bigr),
            \quad \forall (t, x(\cdot)), (\tau, y(\cdot)) \in G.
        \end{equation}

        For a function $z(\cdot) \in C([- h, T], \mathbb{R}^n)$ and a point $t \in [0, T]$, let $z_t(\cdot) \in C([- h, t], \mathbb{R}^n)$  denote the restriction of the function $z(\cdot)$ to the interval $[- h, t]$, i.e.,
        \begin{equation} \label{x_t}
            z_t(\xi)
            \triangleq z(\xi),
            \quad \forall \xi \in [- h, t].
        \end{equation}

        For every $(t, x(\cdot)) \in G$, let $\Lip(t, x(\cdot))$ be the set of all functions $z(\cdot) \in C([- h, T], \mathbb{R}^n)$ that satisfy the condition $z_t(\cdot) = x(\cdot)$ and are Lipschitz continuous on $[t, T]$.

    \subsection{Coinvariant derivatives}

        Let us recall the notion of coinvariant ($ci$-) differentiability (see, e.g., \cite{Gomoyunov_Lukoyanov_Plaksin_2021} and the references therein).

        A functional $\varphi \colon G \to \mathbb{R}$ is called {\it $ci$-differentiable} at a point $(t, x(\cdot)) \in G_0$ if there exist $\partial_t \varphi(t, x(\cdot)) \in \mathbb{R}$ and $\nabla \varphi(t, x(\cdot)) \in \mathbb{R}^n$ such that, for all $z(\cdot) \in \Lip(t, x(\cdot))$,
        \begin{equation} \label{ci_derivatives}
            \lim_{\tau \downarrow t}
            \frac{\varphi(\tau, z_\tau(\cdot)) - \varphi(t, x(\cdot))
            - \partial_t \varphi(t, x(\cdot)) (\tau - t) - \langle \nabla \varphi(t, x(\cdot)), z(\tau) - x(t) \rangle}{\tau - t}
            = 0.
        \end{equation}
        In this case, the values $\partial_t \varphi(t, x(\cdot))$ and $\nabla \varphi(t, x(\cdot))$ are called {\it $ci$-derivatives} of $\varphi$ at $(t, x(\cdot))$.
        Note that, relation \eqref{ci_derivatives} can be rewritten as follows:
        \begin{equation*}
            \varphi(\tau, z_\tau(\cdot)) - \varphi(t, x(\cdot))
            = \partial_t \varphi(t, x(\cdot)) (\tau - t) + \langle \nabla \varphi(t, x(\cdot)), z(\tau) - x(t) \rangle + o(\tau - t)
        \end{equation*}
        for all $\tau \in (t, T]$, where the function $o \colon (0, \infty) \to \mathbb{R}$, which may depend on $t$ and $z(\cdot)$, satisfies the condition $o(\delta) / \delta \to 0$ as $\delta \downarrow 0$.

        A functional $\varphi \colon G \to \mathbb{R}$ is called {\it $ci$-smooth} if it is continuous, $ci$-differ\-en\-tiable at every point $(t, x(\cdot)) \in G_0$, and the mappings $\partial_t \varphi \colon G_0 \to \mathbb{R}$ and $\nabla \varphi \colon G_0 \to \mathbb{R}^n$ are continuous.

    \subsection{Path-dependent Hamilton--Jacobi equation}

        This paper deals with the following {\it path-dependent Hamilton--Jacobi (HJ) equation} with $ci$-derivatives
        \begin{equation} \label{HJ}
            \partial_t \varphi(t, x(\cdot)) + H\bigl( t, x(\cdot), \nabla \varphi(t, x(\cdot)) \bigr)
            = 0,
            \quad \forall (t, x(\cdot)) \in G_0.
        \end{equation}
        In this equation, the {\it Hamiltonian} $G \times \mathbb{R}^n \ni ((t, x(\cdot)), s) \mapsto H(t, x(\cdot), s) \in \mathbb{R}$ is given, and the unknown is a functional $\varphi \colon G \to \mathbb{R}$.

        \begin{assumption} \label{assumption_H1_H2}
            The Hamiltonian $H$ satisfies the conditions below:

            \smallskip

            \noindent {\rm(i)}
                For every $s \in \mathbb{R}^n$, the functional $G \ni (t, x(\cdot)) \mapsto H(t, x(\cdot), s) \in \mathbb{R}$ is continuous.

            \smallskip

            \noindent {\rm (ii)}
                There exists $c_H > 0$ such that, for all $(t, x(\cdot)) \in G$ and all $s$, $r \in \mathbb{R}^n$,
                \begin{equation*}
                    | H(t, x(\cdot), s) - H(t, x(\cdot), r) |
                    \leq c_H (1 + \|x(\cdot)\|_{[- h, t]}) \|s - r\|.
                \end{equation*}
        \end{assumption}

        Note that Assumption \ref{assumption_H1_H2} implies continuity of the Hamiltonian $H$.

        In the literature, for the HJ equation \eqref{HJ}, a {\it Cauchy problem} is usually studied under the right-end {\it boundary condition}
        \begin{equation} \label{boundary_condition}
            \varphi(T, x(\cdot))
            = \sigma(x(\cdot)),
            \quad \forall x(\cdot) \in C([- h, T], \mathbb{R}^n),
        \end{equation}
        where $\sigma \colon C([- h, T], \mathbb{R}^n) \to \mathbb{R}$ is a given {\it boundary functional}, which is assumed to be at least continuous.
        However, since the presence of a specific boundary condition is not essential for the main result of the paper, we will mainly focus on minimax and viscosity solutions of the HJ equation \eqref{HJ} rather than those of the Cauchy problem \eqref{HJ}, \eqref{boundary_condition}.

    \subsection{Minimax solutions}
    \label{subsection_minimax}

        In order to give a definition of a minimax solution of the HJ equation \eqref{HJ}, we need to introduce some notation.

        Let us denote by $\Phi_+$ the set of all functionals $\varphi \colon G \to \mathbb{R}$ that satisfy the following {\it condition of lower semi-continuity}:
        for every point $(t, x(\cdot)) \in G$ and every sequence $\{(t^{(k)}, x^{(k)}(\cdot))\}_{k = 1}^\infty \subset G$, the inequality
        \begin{equation} \label{Phi_+}
            \liminf_{k \to \infty} \varphi(t^{(k)}, x^{(k)}(\cdot))
            \geq \varphi(t, x(\cdot))
        \end{equation}
        is valid provided that $\rho_1 ((t^{(k)}, x^{(k)}(\cdot)), (t, x(\cdot))) \to 0$ as $k \to \infty$ and there exists $\alpha > 0$ such that $\{(t^{(k)}, x^{(k)}(\cdot))\}_{k = 1}^\infty \subset G(\alpha)$.
        Note that, if a functional $\varphi \colon G \to \mathbb{R}$ is lower semi-continuous with respect to the auxiliary metric $\rho_1$, then $\varphi \in \Phi_+$.
        On the other hand, any functional $\varphi \in \Phi_+$ is lower semi-continuous (with respect to the basic metric $\rho_\infty$).

        Analogously, let $\Phi_-$ be the set of all functionals $\varphi \colon G \to \mathbb{R}$ that satisfy the following {\it upper semi-continuity condition}:
        for every $(t, x(\cdot)) \in G$ and every $\{(t^{(k)}, x^{(k)}(\cdot))\}_{k = 1}^\infty \subset G$, the inequality
        \begin{equation} \label{Phi_-}
            \limsup_{k \to \infty} \varphi(t^{(k)}, x^{(k)}(\cdot))
            \leq \varphi(t, x(\cdot))
        \end{equation}
        is fulfilled if $\rho_1 ((t^{(k)}, x^{(k)}(\cdot)), (t, x(\cdot))) \to 0$ as $k \to \infty$ and $\{(t^{(k)}, x^{(k)}(\cdot))\}_{k = 1}^\infty \subset G(\alpha)$ for some $\alpha > 0$.

        Further, for every $(t, x(\cdot)) \in G$, let us consider the sets
        \begin{equation} \label{B}
            B_{c_H}(t, x(\cdot))
            \triangleq \bigl\{ l \in \mathbb{R}^n \bigm|
            \|l\| \leq c_H (1 + \|x(\cdot)\|_{[- h, t]}) \bigr\},
        \end{equation}
        where $c_H$ is the constant from condition (ii) of Assumption \ref{assumption_H1_H2}, and
        \begin{equation*}
            \Lip_{c_H}(t, x(\cdot))
            \triangleq \bigl\{ z(\cdot) \in \Lip(t, x(\cdot)) \bigm|
            \dot{z}(\tau) \in B_{c_H}(\tau, z_\tau(\cdot)) \text{ for a.e. } \tau \in [t, T] \bigr\},
        \end{equation*}
        where $\dot{z}(\tau) \triangleq \rd z(\tau) / \rd \tau$.

        Finally, for $\varphi \colon G \to \mathbb{R}$ and $s \in \mathbb{R}^n$, let $\varphi_s \colon G \to \mathbb{R}$ denote the functional defined by
        \begin{equation} \label{varphi_s}
            \varphi_s (t, x(\cdot))
            \triangleq \varphi(t, x(\cdot)) - \langle s, x(t) \rangle,
            \quad \forall (t, x(\cdot)) \in G.
        \end{equation}

        Following, e.g., \cite{Gomoyunov_Lukoyanov_Plaksin_2021} (see also the references therein), we give
        \begin{definition} \label{definition_minimax}
            Let $\varphi \colon G \to \mathbb{R}$.

            \smallskip

            \noindent (i)
                We call $\varphi$ an {\it upper minimax solution} of the HJ equation \eqref{HJ} if $\varphi \in \Phi_+$ and
                \begin{equation} \label{upper_minimax}
                    \begin{array}{c}
                        \displaystyle
                        \inf_{z(\cdot) \in \Lip_{c_H}(t, x(\cdot))}
                        \biggl( \varphi_s(\tau, z_\tau(\cdot))
                        + \int_t^\tau H(\xi, z_\xi(\cdot), s) \, \rd \xi \biggr)
                        \leq \varphi_s(t, x(\cdot)), \\[1.2em]
                        \forall (t, x(\cdot)) \in G_0, \quad \forall \tau \in (t, T], \quad \forall s \in \mathbb{R}^n.
                    \end{array}
                \end{equation}

            \smallskip

            \noindent (ii)
                We call $\varphi$ a {\it lower minimax solution} of the HJ equation \eqref{HJ} if $\varphi \in \Phi_-$ and
                \begin{equation} \label{lower_minimax}
                    \begin{array}{c}
                        \displaystyle
                        \sup_{z(\cdot) \in \Lip_{c_H}(t, x(\cdot))}
                        \biggl( \varphi_s(\tau, z_\tau(\cdot))
                        + \int_t^\tau H(\xi, z_\xi(\cdot), s) \, \rd \xi \biggr)
                        \geq \varphi_s(t, x(\cdot)), \\[1.2em]
                        \forall (t, x(\cdot)) \in G_0, \quad \forall \tau \in (t, T], \quad \forall s \in \mathbb{R}^n.
                    \end{array}
                \end{equation}

            \smallskip

            \noindent (iii)
                We call $\varphi$ a {\it minimax solution} of the HJ equation \eqref{HJ} if $\varphi$ is an upper as well as a lower minimax solution of this equation.
        \end{definition}

    \subsection{Viscosity solutions}

        Let us recall the notions of coinvariant ($ci$-) sub- and superdifferentials introduced in \cite[Section 14]{Lukoyanov_2011_Eng}.

        Let $\varphi \colon G \to \mathbb{R}$ and $(t, x(\cdot)) \in G_0$.
        For every $z(\cdot) \in \Lip(t, x(\cdot))$, let us consider the {\it lower} and {\it upper right derivatives} of the functional $\varphi$ at the point $(t, x(\cdot))$ {\it along the function} $z(\cdot)$ given by
        \begin{equation} \label{derivatives_along_extenstion}
            \partial^- \varphi(t, x(\cdot); z(\cdot))
            \triangleq \liminf_{\tau \downarrow t} \frac{\varphi(\tau, z_\tau(\cdot)) - \varphi(t, x(\cdot))}{\tau - t}
        \end{equation}
        and
        \begin{equation*}
            \partial^+ \varphi(t, x(\cdot); z(\cdot))
            \triangleq \limsup_{\tau \downarrow t} \frac{\varphi(\tau, z_\tau(\cdot)) - \varphi(t, x(\cdot))}{\tau - t},
        \end{equation*}
        respectively.
        Then, the {\it $ci$-subdifferential} and {\it $ci$-superdifferential} of $\varphi$ at $(t, x(\cdot))$ are introduced as follows:
        \begin{equation} \label{subdifferential_via_directional}
            D^- \varphi(t, x(\cdot))
            \triangleq \bigl\{ (p_0, p) \in \mathbb{R} \times \mathbb{R}^n \bigm|
            p_0
            \leq \partial^- \varphi_p (t, x(\cdot); z(\cdot)),
            \, \forall z(\cdot) \in \Lip(t, x(\cdot)) \bigr\}
        \end{equation}
        and
        \begin{equation} \label{superdifferential_via_directional}
            D^+ \varphi(t, x(\cdot))
            \triangleq \bigl\{ (q_0, q) \in \mathbb{R} \times \mathbb{R}^n \bigm|
            q_0
            \geq \partial^+ \varphi_q (t, x(\cdot); z(\cdot)),
            \, \forall z(\cdot) \in \Lip(t, x(\cdot)) \bigr\},
        \end{equation}
        respectively, where the functionals $\varphi_p$, $\varphi_q$ are defined by $\varphi$ and $p$, $q$ according to \eqref{varphi_s}.
        Rewriting \eqref{subdifferential_via_directional} and \eqref{superdifferential_via_directional} in explicit form, we obtain (compare with the definition of $ci$-differentiability, see \eqref{ci_derivatives})
        \begin{multline} \label{subdifferential}
            D^- \varphi(t, x(\cdot))
            = \biggl\{ (p_0, p) \in \mathbb{R} \times \mathbb{R}^n \biggm|
            \forall z(\cdot) \in \Lip(t, x(\cdot)) \\
            \liminf_{\tau \downarrow t}
            \frac{\varphi(\tau, z_\tau(\cdot)) - \varphi(t, x(\cdot)) - p_0 (\tau - t) - \langle p, z(\tau) - x(t) \rangle}{\tau - t}
            \geq 0 \biggr\}
        \end{multline}
        and
        \begin{multline*}
            D^+ \varphi(t, x(\cdot))
            = \biggl\{ (q_0, q) \in \mathbb{R} \times \mathbb{R}^n \biggm|
            \forall z(\cdot) \in \Lip(t, x(\cdot)) \\
            \limsup_{\tau \downarrow t}
            \frac{\varphi(\tau, z_\tau(\cdot)) - \varphi(t, x(\cdot)) - q_0 (\tau - t) - \langle q, z(\tau) - x(t) \rangle}{\tau - t}
            \leq 0 \biggr\}.
        \end{multline*}
        Note also that, if $\varphi$ is $ci$-differentiable at $(t, x(\cdot))$, then
        \begin{equation*}
            D^- \varphi (t, x(\cdot))
            = \bigl\{ (p_0, p) \in \mathbb{R} \times \mathbb{R}^n \bigm|
            p_0
            \leq \partial_t \varphi(t, x(\cdot)),
            \, p
            = \nabla \varphi(t, x(\cdot)) \bigr\}
        \end{equation*}
        and
        \begin{equation*}
            D^+ \varphi(t, x(\cdot))
            = \bigl\{ (q_0, q) \in \mathbb{R} \times \mathbb{R}^n \bigm|
            q_0
            \geq \partial_t \varphi(t, x(\cdot)),
            \, q
            = \nabla \varphi(t, x(\cdot)) \bigr\}.
        \end{equation*}

        In accordance with \cite[Section 14]{Lukoyanov_2011_Eng}, we give
        \begin{definition} \label{definition_viscosity}
            Let $\varphi \colon G \to \mathbb{R}$.

            \smallskip

            \noindent (i)
                We call $\varphi$ an {\it upper viscosity solution} of the HJ equation \eqref{HJ} if $\varphi \in \Phi_+$ and
                \begin{equation} \label{upper_viscosity}
                    p_0 + H(t, x(\cdot), p)
                    \leq 0,
                    \quad \forall (p_0, p) \in D^- \varphi(t, x(\cdot)), \quad \forall (t, x(\cdot)) \in G_0.
                \end{equation}

            \smallskip

            \noindent (ii)
                We call $\varphi$ a {\it lower viscosity solution} of the HJ equation \eqref{HJ} if $\varphi \in \Phi_-$ and
                \begin{equation} \label{lower_viscosity}
                    q_0 + H(t, x(\cdot), q)
                    \geq 0,
                    \quad \forall (q_0, q) \in D^+ \varphi (t, x(\cdot)), \quad \forall (t, x(\cdot)) \in G_0.
                \end{equation}

            \smallskip

            \noindent (iii)
                We call $\varphi$ a {\it viscosity solution} of the HJ equation \eqref{HJ} if it is an upper as well as a lower viscosity solution of this equation.
        \end{definition}

        Note that Definition \ref{definition_viscosity} can be considered as a quite natural extension of the definition of viscosity solutions of HJ equations with partial derivatives in terms of inequalities for sub- and supergradients from, e.g., \cite{Crandall_Evans_Lions_1984} (see also, e.g., \cite[Sections 4.4 and 6.3]{Subbotin_1995}) to the path-dependent setting.

    \subsection{Main result}

        The main result of the paper is
        \begin{theorem} \label{theorem_main}
            Let Assumption \ref{assumption_H1_H2} hold.
            Then, a functional $\varphi \colon G \to \mathbb{R}$ is an upper minimax (resp., a lower minimax, resp. a minimax) solution of the HJ equation \eqref{HJ} if and only if $\varphi$ is an upper viscosity (resp., a lower viscosity, resp. a viscosity) solution of this equation.
        \end{theorem}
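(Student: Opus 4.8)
The plan is to reduce to the ``upper'' case and then prove the two implications separately, with the passage from viscosity to minimax carrying essentially all of the difficulty. First I would record a symmetry: the assignment $(\varphi, H) \mapsto (-\varphi, H^\ast)$ with $H^\ast(t, x(\cdot), s) \triangleq - H(t, x(\cdot), -s)$ preserves Assumption~\ref{assumption_H1_H2} (same constant $c_H$, hence the same set $B_{c_H}(t,x(\cdot))$ in \eqref{B}), interchanges $\Phi_+$ with $\Phi_-$ and, via \eqref{varphi_s}, turns $D^-\varphi$ into $D^+(-\varphi)$ and \eqref{upper_minimax}, \eqref{upper_viscosity} into \eqref{lower_minimax}, \eqref{lower_viscosity}. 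Thus it suffices to prove that $\varphi$ is an upper minimax solution if and only if it is an upper viscosity solution; the lower case and the full ``minimax'' case then follow formally.

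For the implication \emph{upper minimax $\Rightarrow$ upper viscosity} I would pass to the infinitesimal form of \eqref{upper_minimax}, available from the minimax-solutions literature \cite{Gomoyunov_Lukoyanov_Plaksin_2021}: condition \eqref{upper_minimax} is equivalent to the requirement that for every $(t, x(\cdot)) \in G_0$ and every $s \in \mathbb{R}^n$ there exist $l \in B_{c_H}(t, x(\cdot))$ and a curve $z(\cdot) \in \Lip_{c_H}(t, x(\cdot))$ with $\dot{z}(t) = l$ such that
\[ \liminf_{\tau \downarrow t} \frac{\varphi(\tau, z_\tau(\cdot)) - \varphi(t, x(\cdot))}{\tau - t} - \langle s, l \rangle + H(t, x(\cdot), s) \leq 0. \]
Given $(p_0, p) \in D^- \varphi(t, x(\cdot))$, the explicit form \eqref{subdifferential} applied to this very curve yields $p_0 \leq \liminf_{\tau \downarrow t} (\varphi(\tau, z_\tau(\cdot)) - \varphi(t, x(\cdot)))/(\tau - t) - \langle p, l \rangle$, since $(z(\tau) - x(t))/(\tau - t) \to l$. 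Taking $s = p$ and combining the two displays gives $p_0 + H(t, x(\cdot), p) \leq 0$, which is \eqref{upper_viscosity}. The only analytic input is that the velocity bound \eqref{B} together with a Gr\"onwall estimate makes $\Lip_{c_H}(t, x(\cdot))$ equi-Lipschitz near $t$, so the relevant slice of $G$ is $\rho_\infty$-precompact and $H$ is uniformly continuous there; this legitimizes replacing the integral condition by the infinitesimal one.

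The reverse implication, \emph{upper viscosity $\Rightarrow$ upper minimax}, is the hard direction and follows the scheme of \cite[Theorem~4.3]{Subbotin_1995} adapted as in \cite{Plaksin_2020_JOTA}. Arguing by contradiction, suppose \eqref{upper_minimax} fails at some $(t_\ast, x_\ast(\cdot))$, $\tau_\ast$, $s_\ast$, so that $\varphi_{s_\ast}$ increases at rate at least $\beta > 0$ along \emph{every} admissible curve. The goal is to produce one point $(t', x'(\cdot)) \in G_0$ and a $ci$-subgradient $(p_0, p) \in D^- \varphi(t', x'(\cdot))$ with $p_0 + H(t', x'(\cdot), p) > 0$, contradicting \eqref{upper_viscosity}. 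This is exactly what a \emph{Property of the $ci$-subdifferential} delivers: if the lower right derivatives of $\varphi$ are bounded below along all directions of the convex compact set $B_{c_H}$, then arbitrarily close by there is a point carrying a $ci$-subgradient in the corresponding dual set. To establish it I would mimic the proof of the multidirectional mean-value inequality \cite{Clarke_Ledyaev_1994}: minimize, over curves $z(\cdot)$ issuing from a neighborhood of $(t_\ast, x_\ast(\cdot))$, the functional obtained from $\varphi_{s_\ast}$ at the moving endpoint plus the running term $\int H(\xi, z_\xi(\cdot), s_\ast)\,\rd\xi$, and read off a $ci$-subgradient at a (near-)minimizing point; the assumed rate $\beta$ then forces the strict sign.

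\textbf{The main obstacle} is that $\varphi_{s_\ast}$ is merely lower semi-continuous and $(G, \rho_\infty)$ is not locally compact, so such a minimizer need not exist. To circumvent this I would apply the Borwein--Preiss variational principle in the form of \cite[Theorem~1]{Li_Shi_2000}, which requires a $ci$-smooth gauge-type functional; since the uniform norm is not differentiable, I would build the gauge from the smooth surrogate for the squared uniform norm constructed in \cite{Zhou_2020_1}, arranged so that the perturbation supplied by the principle is itself $ci$-smooth and therefore contributes a genuine element of $D^- \varphi$ at the approximate minimizer. The bulk of the work then consists in verifying the completeness and coercivity hypotheses of the variational principle on $(G, \rho_\infty)$ and in checking that the extracted subgradient lands in the dual set with the correct strict inequality; I expect this verification, rather than either implication of the main equivalence, to be the technical heart of the proof.
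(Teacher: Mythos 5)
Your overall architecture is essentially the paper's: reduction to the upper case (your symmetry $(\varphi,H)\mapsto(-\varphi,H^\ast)$, $H^\ast(t,x(\cdot),s)=-H(t,x(\cdot),-s)$, is correct and a legitimate substitute for the paper's ``the lower case is proved similarly''), the easy direction via an infinitesimal criterion for \eqref{upper_minimax}, and the hard direction via a Property of the $ci$-subdifferential proved with the Borwein--Preiss principle of Li--Shi and a gauge built from Zhou's smooth functional. However, there is a genuine gap in your hard direction. Failure of \eqref{upper_minimax} at $(t_\ast,x_\ast(\cdot))$, $\tau_\ast$, $s_\ast$ gives only
\begin{equation*}
    \inf_{z(\cdot)\in\Lip_{c_H}(t_\ast,x_\ast(\cdot))}
    \biggl(\varphi_{s_\ast}(\tau_\ast,z_{\tau_\ast}(\cdot))+\int_{t_\ast}^{\tau_\ast}H(\xi,z_\xi(\cdot),s_\ast)\,\rd\xi\biggr)
    \geq\varphi_{s_\ast}(t_\ast,x_\ast(\cdot))+\beta,
\end{equation*}
i.e., a lower bound on the increment at the \emph{single terminal time} $\tau_\ast$. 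It does \emph{not} follow that ``$\varphi_{s_\ast}$ increases at rate at least $\beta>0$ along every admissible curve'': along a fixed curve, $\varphi_{s_\ast}$ may first decrease and recover only near $\tau_\ast$. But the hypothesis your Property of the $ci$-subdifferential requires is exactly a uniform local-in-time rate bound, $\varphi_{s_\ast}(\tau,\omega_\tau(\cdot))-\varphi_{s_\ast}(t_\ast,x_\ast(\cdot))\geq 2\varepsilon_\ast(\tau-t_\ast)$ for all small $\tau-t_\ast$ and all admissible $\omega(\cdot)$, which is what positivity of the derivative $\rd^-_0$ from \eqref{new_lower_directional_derivative_multivalued} encodes (see \eqref{lemKL_cond}). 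Bridging the integral statement and this pointwise one is a separate, non-trivial step: in the paper it is Lemma \ref{lemma_2} (the infinitesimal condition \eqref{upper_minimax_new_derivatives_multi-valued} implies \eqref{upper_minimax}, via Lukoyanov's viability-type construction of approximating curves), so that, in contrapositive, integral failure yields \emph{some other} point at which the infinitesimal condition fails, and only there is the subdifferential property invoked (Lemma \ref{lemma_3}). Alternatively one could run the mean-value argument over the whole tube up to $\tau_\ast$ with a penalization $-\varepsilon(\tau-t_\ast)$, but then one must explicitly rule out that the perturbed minimizer lands at the terminal time (this is where $\varphi\in\Phi_+$ enters, cf.\ the proof of \eqref{overline_tau_leq}); your proposal contains neither step, so the contradiction does not close.

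A secondary inaccuracy: your claimed infinitesimal criterion asserts a curve with exact initial velocity $\dot z(t)=l$ for some $l\in B_{c_H}(t,x(\cdot))$. For merely lower semi-continuous $\varphi\in\Phi_+$ the available criterion (Proposition \ref{proposition_criteria_minimax}) is stated in terms of $\rd^-\varphi_s(t,x(\cdot);B_{c_H}(t,x(\cdot)))$, whose curves have velocities only in $\varepsilon$-neighbourhoods of $B_{c_H}(t,x(\cdot))$ and need not be differentiable at $t$; reduction to single-valued directions is established only under the extra Lipschitz condition $\varphi\in\Phi_{\Lip}$ (Section \ref{subsubsection_Lipschitz}). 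The repair is immediate and in fact simpler than your argument: by \eqref{subdifferential_via_directional}, $(p_0,p)\in D^-\varphi(t,x(\cdot))$ gives $p_0\leq\partial^-\varphi_p(t,x(\cdot);\omega(\cdot))$ for \emph{every} $\omega(\cdot)\in\Lip(t,x(\cdot))$, so no initial velocity need be extracted --- this is the paper's Lemma \ref{lemma_1}.
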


        In other words, Theorem \ref{theorem_main} states that, under Assumption \ref{assumption_H1_H2}, conditions \eqref{upper_minimax} and \eqref{upper_viscosity} are equivalent for every functional $\varphi \in \Phi_+$, while conditions \eqref{lower_minimax} and \eqref{lower_viscosity} are equivalent for every functional $\varphi \in \Phi_-$.

        The proof of Theorem \ref{theorem_main} is given in Section \ref{section_proof}.
        It is carried out by the scheme from \cite[Theorem 4.3]{Subbotin_1995}, where a similar result is obtained for HJ equations with partial derivatives (more precisely, see the proof of the equivalence $(U5) \Leftrightarrow (U6)$ in \cite[Section 4.6]{Subbotin_1995}).
        Note that this scheme was already partially adapted to the path-dependent setting in \cite{Plaksin_2020_JOTA}.

        The next section is devoted to a discussion of Theorem \ref{theorem_main} and some of its corollaries.

\section{Discussion of main result and its corollaries}
\label{section_Discussion}

    It is convenient to divide the discussion into three parts.

    \subsection{Comparison theorem for viscosity solutions}

        This section deals with the Cauchy problem for the HJ equation \eqref{HJ} under the boundary condition \eqref{boundary_condition}.
        Let us give definitions of minimax and viscosity solutions of this problem.

        \begin{definition}
            Let $\varphi \colon G \to \mathbb{R}$.

            \smallskip

            \noindent (i)
                We call $\varphi$ an {\it upper minimax} (resp., an {\it upper viscosity}) solution of the Cauchy problem \eqref{HJ}, \eqref{boundary_condition} if $\varphi$ is an upper minimax (resp., an upper viscosity) solution of the HJ equation \eqref{HJ} and
                \begin{equation*}
                    \varphi(T, x(\cdot))
                    \geq \sigma(x(\cdot)),
                    \quad \forall x(\cdot) \in C([- h, T], \mathbb{R}^n).
                \end{equation*}

            \smallskip

            \noindent (ii)
                We call $\varphi$ a {\it lower minimax} (resp., a {\it lower viscosity}) solution of the Cauchy problem \eqref{HJ}, \eqref{boundary_condition} if $\varphi$ is a lower minimax (resp., a lower viscosity) solution of the HJ equation \eqref{HJ} and
                \begin{equation*}
                    \varphi(T, x(\cdot))
                    \leq \sigma(x(\cdot)),
                    \quad \forall x(\cdot) \in C([- h, T], \mathbb{R}^n).
                \end{equation*}

            \smallskip

            \noindent (iii)
                We call $\varphi$ a {\it minimax solution} (resp., a {\it viscosity solution}) of the Cauchy problem \eqref{HJ}, \eqref{boundary_condition} if $\varphi$ is an upper as well as a lower minimax (resp., an upper as well as a lower viscosity) solution of this problem.
        \end{definition}

        In addition to Assumption \ref{assumption_H1_H2}, we make
        \begin{assumption} \label{assumption_H3_sigma}
            The Hamiltonian $H$ and the boundary functional $\sigma$ from \eqref{HJ} and \eqref{boundary_condition} satisfy the following conditions:

            \smallskip

            \noindent (i)
                For any compact set $X \subset G$, there exists $\lambda_H > 0$ such that
                \begin{equation*}
                    |H(t, x(\cdot), s) - H(t, y(\cdot), s)|
                    \leq \lambda_H (1 + \|s\|) \|x(\cdot) - y(\cdot)\|_{[- h, t]}
                \end{equation*}
                for all $(t, x(\cdot))$, $(t, y(\cdot)) \in X$ and all $s \in \mathbb{R}^n$.

            \smallskip

            \noindent (ii)
                The functional $\sigma$ is continuous.
        \end{assumption}

        The next theorem is a comparison result for upper and lower viscosity solutions.
        \begin{theorem} \label{theorem_comparison}
            Suppose that Assumptions \ref{assumption_H1_H2} and \ref{assumption_H3_sigma} hold.
            Let $\varphi_+$ be an upper and let $\varphi_-$ be a lower viscosity solution of the Cauchy problem \eqref{HJ}, \eqref{boundary_condition}.
            Then, the inequality below is valid:
            \begin{equation*}
                \varphi_+ (t, x(\cdot))
                \geq \varphi_- (t, x(\cdot)),
                \quad \forall (t, x(\cdot)) \in G.
            \end{equation*}
        \end{theorem}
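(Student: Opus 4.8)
The plan is to obtain the comparison inequality as a corollary of Theorem~\ref{theorem_main}, so that the infinite-dimensional difficulties specific to the viscosity formulation never have to be confronted directly. First I would translate the hypotheses into the minimax language. By Theorem~\ref{theorem_main}, the functional $\varphi_+$, being an upper viscosity solution of the HJ equation \eqref{HJ}, is also an upper minimax solution of \eqref{HJ}, and likewise $\varphi_-$ is a lower minimax solution of \eqref{HJ}. Since the boundary inequalities $\varphi_+(T, x(\cdot)) \geq \sigma(x(\cdot))$ and $\varphi_-(T, x(\cdot)) \leq \sigma(x(\cdot))$ appear verbatim in both definitions of a solution of the Cauchy problem, $\varphi_+$ (resp. $\varphi_-$) is in fact an upper (resp. lower) minimax solution of the Cauchy problem \eqref{HJ}, \eqref{boundary_condition}. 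Thus the statement reduces to the comparison inequality $\varphi_+ \geq \varphi_-$ for an upper and a lower minimax solution, and it remains only to verify that Assumptions~\ref{assumption_H1_H2} and \ref{assumption_H3_sigma} are exactly the hypotheses under which such a comparison principle holds.

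For the minimax comparison itself I would proceed by weak invariance. Fix $(t_0, w_0(\cdot)) \in G$; the case $t_0 = T$ is immediate from the boundary inequalities, so assume $t_0 < T$. Every characteristic $z(\cdot) \in \Lip_{c_H}(t_0, w_0(\cdot))$ satisfies $\|\dot z(\tau)\| \leq c_H(1 + \|z_\tau(\cdot)\|_{[- h, \tau]})$ for a.e. $\tau$, so a Gronwall estimate confines the whole family to some $G(\alpha)$ with $\alpha = \alpha(t_0, w_0(\cdot))$, and, the derivatives being uniformly bounded, Arzela--Ascoli shows that the associated set of points $(\tau, z_\tau(\cdot))$ is relatively compact in $(G, \rho_\infty)$; let $X$ denote its closure. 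On the compact set $X$, condition (i) of Assumption~\ref{assumption_H3_sigma} provides a single constant $\lambda_H$. One then propagates the terminal inequality backward: using \eqref{upper_minimax} for $\varphi_+$ and \eqref{lower_minimax} for $\varphi_-$ with a common parameter $s$ chosen, on each step of a partition of $[t_0, T]$, according to the current discrepancy of the two constructed trajectories, one controls the growth of $\varphi_-(\cdot) - \varphi_+(\cdot)$ by a Gronwall-type bound in which the Hamiltonian differences are estimated through $\lambda_H$ and the current states are kept close. Passing to the limit as the mesh tends to $0$ — the constructed trajectories converge in $\rho_\infty$, hence in $\rho_1$ by \eqref{rho_1_and_rho_infty}, and the memberships $\varphi_+ \in \Phi_+$, $\varphi_- \in \Phi_-$ deliver the one-sided semicontinuity needed for the passage — together with the terminal inequality $\varphi_+(T, \cdot) \geq \sigma \geq \varphi_-(T, \cdot)$ and the continuity of $\sigma$, yields $\varphi_+(t_0, w_0(\cdot)) \geq \varphi_-(t_0, w_0(\cdot))$.

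The main obstacle is not in Theorem~\ref{theorem_comparison} as such — once Theorem~\ref{theorem_main} is granted, the viscosity-specific failure of local compactness is sidestepped — but rather in making the minimax comparison step fit the \emph{local}, compact-set-wise form of Assumption~\ref{assumption_H3_sigma}(i). The essential point is that, although $G$ is not locally compact and a bounded closed subset of $G$ need not be compact, the reachable family emanating from a single point $(t_0, w_0(\cdot))$ under the derivative bound $\dot z \in B_{c_H}$ \emph{is} precompact by Arzela--Ascoli; this is precisely what lets the local Lipschitz constant $\lambda_H$ act uniformly and makes the backward Gronwall argument close. If the precise statement of the minimax comparison principle under Assumptions~\ref{assumption_H1_H2} and \ref{assumption_H3_sigma} is not directly available in the literature, this confinement-to-a-compact-set observation is the one nontrivial ingredient one must supply, the remaining trajectory construction being standard in the minimax theory.
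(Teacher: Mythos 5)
Your proposal takes essentially the same route as the paper: Theorem \ref{theorem_main} converts $\varphi_+$ and $\varphi_-$ into an upper and a lower minimax solution of the Cauchy problem \eqref{HJ}, \eqref{boundary_condition} (the boundary inequalities transfer verbatim), and the statement then reduces to the comparison principle for minimax solutions. The only divergence is that the paper settles this second step by citing the comparison result obtained within the proof of \cite[Theorem 1]{Gomoyunov_Lukoyanov_Plaksin_2021}, whereas you sketch its proof from scratch; your sketch --- Gronwall confinement of the characteristics to a set $G(\alpha)$, Arzel\`a--Ascoli precompactness making the constant $\lambda_H$ of Assumption \ref{assumption_H3_sigma}(i) act uniformly, and backward stepwise propagation --- is in substance the argument of that reference, so it is a re-derivation rather than a genuinely different method.
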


        Theorem \ref{theorem_comparison} follows directly from Theorem \ref{theorem_main} and the corresponding comparison result for upper and lower minimax solutions of the Cauchy problem \eqref{HJ}, \eqref{boundary_condition} obtained as a part of the proof of \cite[Theorem 1]{Gomoyunov_Lukoyanov_Plaksin_2021}.

        In particular, Theorem \ref{theorem_comparison} yields a uniqueness result for viscosity solutions.
        \begin{corollary} \label{corollary_uniqueness}
            Let Assumptions \ref{assumption_H1_H2} and \ref{assumption_H3_sigma} hold.
            Then, there exists at most one viscosity solution of the Cauchy problem \eqref{HJ}, \eqref{boundary_condition}.
        \end{corollary}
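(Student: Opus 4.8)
The plan is to derive the uniqueness assertion directly from the comparison result in Theorem~\ref{theorem_comparison}, exploiting the fact that a viscosity solution of the Cauchy problem is, by definition, simultaneously an upper and a lower viscosity solution of that problem. All of the substantive work is already contained in Theorem~\ref{theorem_comparison} (and, through it, in Theorem~\ref{theorem_main} together with the minimax comparison result from \cite{Gomoyunov_Lukoyanov_Plaksin_2021}); the corollary is then obtained by applying that comparison inequality twice, with the two candidate solutions interchanged.

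Concretely, suppose $\varphi_1, \varphi_2 \colon G \to \mathbb{R}$ are both viscosity solutions of the Cauchy problem \eqref{HJ}, \eqref{boundary_condition}. By part (iii) of the corresponding definition, each $\varphi_i$ is at once an upper and a lower viscosity solution of the problem. First I would apply Theorem~\ref{theorem_comparison} with $\varphi_+ = \varphi_1$ and $\varphi_- = \varphi_2$, which yields $\varphi_1(t, x(\cdot)) \geq \varphi_2(t, x(\cdot))$ for all $(t, x(\cdot)) \in G$. Then I would apply the same theorem with the roles reversed, i.e., with $\varphi_+ = \varphi_2$ and $\varphi_- = \varphi_1$, obtaining the opposite inequality $\varphi_2(t, x(\cdot)) \geq \varphi_1(t, x(\cdot))$ on all of $G$. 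Combining the two inequalities gives $\varphi_1 = \varphi_2$ identically on $G$, which is precisely the claimed uniqueness.

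Since the argument is purely a symmetric application of an already-established comparison principle, there is essentially no obstacle to overcome: the only thing that has to be verified is that each of the two solutions genuinely qualifies both as the upper solution $\varphi_+$ and as the lower solution $\varphi_-$ in the hypotheses of Theorem~\ref{theorem_comparison}, which is immediate from part (iii) of the definition. One should also note that Assumptions~\ref{assumption_H1_H2} and~\ref{assumption_H3_sigma}, which are the standing hypotheses of the corollary, are exactly the hypotheses required by Theorem~\ref{theorem_comparison}, so the comparison inequality is legitimately available for each ordered pair formed from the two candidate solutions. Hence the corollary is a short and direct consequence, with no additional analysis needed.
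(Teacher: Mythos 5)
Your proposal is correct and coincides with the paper's own argument: the paper derives Corollary \ref{corollary_uniqueness} directly from Theorem \ref{theorem_comparison}, exactly by the symmetric double application of the comparison inequality that you describe (each viscosity solution serving as both $\varphi_+$ and $\varphi_-$). Nothing further is needed.
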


        \begin{remark}
            The comparison theorems from \cite{Zhou_2015,Plaksin_2020_JOTA,Zhou_2020_2,Cosso_Gozzi_Rosestolato_Russo_2021,Plaksin_2021_SIAM,Zhou_2021} require that the correspondingly defined upper and lower viscosity solutions satisfy certain additional conditions like growth estimates, uniform continuity, or boundedness.
            In contrast to these results, Theorem \ref{theorem_comparison} assumes only that $\varphi_+ \in \Phi_+$ and $\varphi_- \in \Phi_-$, i.e., these functionals $\varphi_+$ and $\varphi_-$ are only semi-continuous (see Section \ref{subsection_minimax}).
            As a consequence, we conclude that Corollary \ref{corollary_uniqueness} ensures the uniqueness of a viscosity solution in a quite wide set of functionals $\Phi_+ \cap \Phi_-$.
            In addition, we note that Assumptions \ref{assumption_H1_H2} and \ref{assumption_H3_sigma} are weaker than those considered in the cited papers.
        \end{remark}

        Concerning an existence result for a viscosity solution of the Cauchy problem \eqref{HJ}, \eqref{boundary_condition}, the reader is referred to Section \ref{subsection_optimal_control_problem} below.

    \subsection{Some other criteria for minimax and viscosity solutions}

        In the theory of HJ equations with partial derivatives, various criteria for minimax and viscosity solutions are known (see, e.g., \cite[Sections 4.4 and 6.3]{Subbotin_1995} and \cite{Crandall_Lions_1983,Crandall_Evans_Lions_1984}).
        Theorem \ref{theorem_main} provides an analog of one of these criteria for the path-dependent HJ equation \eqref{HJ}.
        In this section we show that, in addition, Theorem \ref{theorem_main} allows us to immediately obtain analogues of some other criteria.
        For brevity, we deal with upper solutions only, while the corresponding results for lower solutions can be derived similarly.
        We first consider a general case when $\varphi \in \Phi_+$ (as in Definitions \ref{definition_minimax} and \ref{definition_viscosity}), and then we focus on a particular case when $\varphi$ satisfies an additional local Lipschitz condition.

        \subsubsection{General case}

            Let us first recall an infinitesimal criterion for an upper minimax solution of the path-dependent HJ equation \eqref{HJ}, obtained in \cite[Theorem 8.1]{Lukoyanov_2003_1} (see also \cite[Theorem 1]{Lukoyanov_2001_PMM_Eng} and \cite[Proposition 4]{Gomoyunov_Lukoyanov_Plaksin_2021}).

            Let $\varphi \colon G \to \mathbb{R}$, let $(t, x(\cdot)) \in G_0$, and let $L \subset \mathbb{R}^n$ be a non-empty convex compact set.
            The {\it lower right derivative} of the functional $\varphi$ at the point $(t, x(\cdot))$ in the {\it multi-valued direction} $L$ is defined by
            \begin{equation} \label{derivatives_multi-valued}
                \rd^- \varphi(t, x(\cdot); L)
                \triangleq \lim_{\varepsilon \downarrow 0} \inf_{\omega(\cdot) \in \Omega(t, x(\cdot), [L]^\varepsilon)}
                \partial^- \varphi(t, x(\cdot); \omega(\cdot)),
            \end{equation}
            where
            \begin{equation} \label{Omega}
                \Omega(t, x(\cdot), [L]^\varepsilon)
                \triangleq \bigl\{ \omega(\cdot) \in \Lip(t, x(\cdot)) \bigm|
                \dot{\omega}(\tau) \in [L]^\varepsilon \text{ for a.e. } \tau \in [t, T] \bigr\},
            \end{equation}
            the symbol $[L]^\varepsilon$ stands for the closed $\varepsilon$-neighborhood of $L$ in $\mathbb{R}^n$, and $\partial^- \varphi(t, x(\cdot); \omega(\cdot))$ is the lower right derivative along the function $\omega(\cdot)$ (see \eqref{derivatives_along_extenstion}).

            \begin{proposition} \label{proposition_criteria_minimax}
                Suppose that Assumption \ref{assumption_H1_H2} holds.
                Then, for every lower semi-continuous functional $\varphi \colon G \to \mathbb{R}$, condition \eqref{upper_minimax} is equivalent to the following one:
                \begin{equation} \label{upper_minimax_derivatives_multi-valued}
                    \rd^- \varphi_s \bigl( t, x(\cdot); B_{c_H}(t, x(\cdot)) \bigr) + H(t, x(\cdot), s)
                    \leq 0,
                    \quad \forall (t, x(\cdot)) \in G_0, \quad \forall s \in \mathbb{R}^n.
                \end{equation}
            \end{proposition}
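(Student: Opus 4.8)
The plan is to reduce both conditions to statements about a single scalar auxiliary functional and then to identify the resulting nonlocal and infinitesimal descent properties. For $(t, x(\cdot)) \in G_0$, $s \in \mathbb{R}^n$ and $z(\cdot) \in \Lip(t, x(\cdot))$, set
\[
    y_z(\tau) \triangleq \varphi_s(\tau, z_\tau(\cdot)) + \int_t^\tau H(\xi, z_\xi(\cdot), s) \, \rd \xi, \quad \tau \in [t, T].
\]
Since $z_t(\cdot) = x(\cdot)$, we have $y_z(t) = \varphi_s(t, x(\cdot))$ for every $z(\cdot)$, and, because $\xi \mapsto H(\xi, z_\xi(\cdot), s)$ is continuous at $\xi = t$ by Assumption \ref{assumption_H1_H2}, the integral term is differentiable at $t$ with derivative $H(t, x(\cdot), s)$; hence, writing $\partial^- y_z(t)$ for $\liminf_{\tau \downarrow t} (y_z(\tau) - y_z(t))/(\tau - t)$, one gets $\partial^- y_z(t) = \partial^- \varphi_s(t, x(\cdot); z(\cdot)) + H(t, x(\cdot), s)$, cf.\ \eqref{derivatives_along_extenstion}. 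Writing $L \triangleq B_{c_H}(t, x(\cdot))$, condition \eqref{upper_minimax} reads ``$\inf_{z(\cdot) \in \Lip_{c_H}(t, x(\cdot))} y_z(\tau) \leq y_z(t)$ for all $\tau \in (t, T]$'', while, since the inner infimum in \eqref{derivatives_multi-valued} is monotone in $\varepsilon$ and $H$ is independent of $\omega(\cdot)$, condition \eqref{upper_minimax_derivatives_multi-valued} reads ``for every $\varepsilon > 0$, $\inf_{\omega(\cdot) \in \Omega(t, x(\cdot), [L]^\varepsilon)} \partial^- y_\omega(t) \leq 0$''. Thus the proposition is the equivalence of a nonlocal weak-decrease property of $y$ with an infinitesimal descent-direction property, the only subtlety being that $\Lip_{c_H}(t, x(\cdot))$ carries the state-dependent velocity constraint $\dot z(\xi) \in B_{c_H}(\xi, z_\xi(\cdot))$ whereas $\Omega(t, x(\cdot), [L]^\varepsilon)$ uses the frozen set $[L]^\varepsilon$, see \eqref{B} and \eqref{Omega}.

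Next I would bridge these two velocity constraints near $t$. A Gronwall estimate based on condition (ii) of Assumption \ref{assumption_H1_H2} shows that on a short horizon $[t, t + \theta]$ every $z(\cdot) \in \Lip_{c_H}(t, x(\cdot))$ stays uniformly close to $x(\cdot)$, so that $B_{c_H}(\xi, z_\xi(\cdot)) \subset [L]^\varepsilon$ for a.e.\ $\xi \in [t, t + \theta]$ once $\theta = \theta(\varepsilon)$ is small; conversely, any $\omega(\cdot) \in \Omega(t, x(\cdot), [L]^\varepsilon)$ can be corrected into $\Lip_{c_H}(t, x(\cdot))$ with an error vanishing as $\varepsilon \downarrow 0$. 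Extending a trajectory past the horizon by a constant is admissible because $0 \in L$. I would also record that, on $[t, t + \theta]$, these velocity-constrained families are equi-Lipschitz and bounded, hence precompact by the Arzel\`a--Ascoli theorem, and that the velocity constraint is preserved under uniform limits.

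The implication \eqref{upper_minimax_derivatives_multi-valued} $\Rightarrow$ \eqref{upper_minimax} I would prove constructively: fixing $\tau$, subdivide $[t, \tau]$ and, using the infinitesimal descent at each node together with the bridge above, select velocities from the sets $B_{c_H}(\cdot, \cdot)$ to build a polygonal $z^{(k)}(\cdot) \in \Lip_{c_H}(t, x(\cdot))$ with $y_{z^{(k)}}(\tau) \leq \varphi_s(t, x(\cdot)) + \eta_k$, where $\eta_k \downarrow 0$ as the mesh refines; extracting a uniform limit $z^{(k)}(\cdot) \to z^*(\cdot)$ by precompactness and invoking the lower semicontinuity of $\varphi$, hence of $y$, yields $y_{z^*}(\tau) \leq \varphi_s(t, x(\cdot))$, which is \eqref{upper_minimax}. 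For the converse I would argue by contradiction: if \eqref{upper_minimax_derivatives_multi-valued} fails at some $(t, x(\cdot))$ and $s$, there are $\varepsilon > 0$ and $\delta > 0$ with $\partial^- y_\omega(t) \geq 2\delta$ for all $\omega(\cdot) \in \Omega(t, x(\cdot), [L]^\varepsilon)$, so each such $\omega(\cdot)$ satisfies $y_\omega(\tau) \geq \varphi_s(t, x(\cdot)) + \delta(\tau - t)$ on some $\omega$-dependent interval; upgrading this to a uniform interval and restricting to $\Lip_{c_H}(t, x(\cdot))$ via the bridge gives $\inf_{z(\cdot) \in \Lip_{c_H}(t, x(\cdot))} y_z(\tau) > \varphi_s(t, x(\cdot))$ for a fixed small $\tau$, contradicting \eqref{upper_minimax}.

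The main obstacle is precisely this last upgrade, namely passing from the trajectory-by-trajectory $\liminf$ defining $\partial^-$ to a descent estimate that is uniform over the whole velocity-constrained family. This is where the Arzel\`a--Ascoli precompactness of the trajectories and the uniform Gronwall estimate are indispensable, the latter also reconciling the state-dependent ball $B_{c_H}(\xi, z_\xi(\cdot))$ with the frozen neighborhood $[L]^\varepsilon$. I expect the regularity of $\varphi$ to enter only through lower semicontinuity, used to pass to the limit in the constructive implication.
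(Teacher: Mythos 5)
Your reduction to the scalar functional $y_z$, and your reading of \eqref{upper_minimax} and \eqref{upper_minimax_derivatives_multi-valued} as a nonlocal and an infinitesimal descent property, are fine; the genuine gap is in the converse implication \eqref{upper_minimax} $\Rightarrow$ \eqref{upper_minimax_derivatives_multi-valued}, precisely at the step you yourself flag as the main obstacle. The ``upgrade'' from the trajectory-wise bound $\partial^- y_\omega(t) \geq 2\delta$ for all $\omega(\cdot) \in \Omega(t, x(\cdot), [L]^\varepsilon)$ to an ascent estimate at a time common to all trajectories is not a consequence of Arzel\`a--Ascoli compactness plus lower semicontinuity: it is false, and so is the composite implication you build on it. Concretely, take $n \geq 2$, $t_\ast = 0$, $x_\ast(\cdot) \equiv 0$, $H \equiv 0$, $c_H = 1$, $s = 0$ (so $L$ is the closed unit ball); choose unit vectors $e(\tau) = (\cos\tau, \sin\tau, 0, \dots, 0)$, $\tau \in (0, T]$, $T < \pi/2$; for $\eta \in \{-1, +1\}$ let $r^{\tau,\eta}(\cdot) \in C([-h,\tau],\mathbb{R}^n)$ vanish on $[-h,0]$ and equal $\eta \sigma e(\tau)$ for $\sigma \in [0,\tau]$; and set $\varphi(\tau, w(\cdot)) \triangleq 0$ if $w(\cdot) = r^{\tau,\eta}(\cdot)$ for some $\eta$, while $\varphi(\tau, w(\cdot)) \triangleq 2\delta\tau$ for every other point of $G$. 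This $\varphi$ is lower semicontinuous. Since the restrictions $\omega_\tau(\cdot)$ of a single $\omega(\cdot)$ are nested and the pairs $\{\pm e(\tau)\}$ are pairwise distinct, each $\omega(\cdot) \in \Omega(0, x_\ast(\cdot), [L]^\varepsilon)$ satisfies $\omega_\tau(\cdot) = r^{\tau,\eta}(\cdot)$ for at most one $\tau$, hence $\partial^- \varphi(0, x_\ast(\cdot); \omega(\cdot)) = 2\delta$ for every such $\omega(\cdot)$ and every $\varepsilon > 0$, i.e., \eqref{upper_minimax_derivatives_multi-valued} fails at $(0, x_\ast(\cdot))$. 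Yet no uniform interval exists, because at each $\tau$ the infimum over the family is $0$, attained on $r^{\tau,\eta}(\cdot)$; and, worse for your scheme, the instance of \eqref{upper_minimax} at the point $(0, x_\ast(\cdot))$ --- the only instance your contradiction invokes --- is \emph{true}: $r^{\tau,\eta}(\cdot)$ extends by a constant tail to an element of $\Lip_{c_H}(0, x_\ast(\cdot))$, and choosing $\eta$ with $\eta\langle s, e(\tau)\rangle \geq 0$ gives $\varphi_s(\tau, r^{\tau,\eta}(\cdot)) = -\eta\tau\langle s, e(\tau)\rangle \leq 0 = \varphi_s(0, x_\ast(\cdot))$ for every $s \in \mathbb{R}^n$ and every $\tau$. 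So no argument can derive a contradiction with \eqref{upper_minimax} at the base point alone.

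The structural reasons are two. First, $\omega(\cdot) \mapsto \partial^- y_\omega(t)$ is not upper semicontinuous under uniform convergence (the bad times of a convergent sequence of trajectories escape to $t$, and lower semicontinuity of $\varphi$ controls limits from the wrong side for your purposes). Second, unlike in the classical finite-dimensional theory --- where your scheme could indeed be completed by splicing the bad pieces of different trajectories into one, using the slack $\varepsilon$ in $[L]^\varepsilon$ to pay the connection cost --- here a low value of $\varphi$ is attached to an entire initial segment of a path, and initial segments of a single path are nested, so bad values at different scales can be mutually unrealizable by any one trajectory; this is exactly what the example exploits. Note that the paper does not prove Proposition \ref{proposition_criteria_minimax} at all: it quotes it from \cite[Theorem 8.1]{Lukoyanov_2003_1}, and the argument there runs the other way around. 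One uses \eqref{upper_minimax} not only at $(t_\ast, x_\ast(\cdot))$ but at all points reached from it: concatenate minimizers of \eqref{upper_minimax} (the infimum is attained, since $\Lip_{c_H}(t, x(\cdot))$ is compact and $z(\cdot) \mapsto y_z(\tau)$ is lower semicontinuous) over finer and finer partitions, pass to an Arzel\`a--Ascoli limit, and use lower semicontinuity --- which does work in this direction, since \emph{upper} bounds at partition points survive the limit --- to obtain a single $z^\ast(\cdot) \in \Lip_{c_H}(t, x(\cdot))$ with $y_{z^\ast}(\tau) \leq \varphi_s(t, x(\cdot))$ for all $\tau$ simultaneously; differentiating along $z^\ast(\cdot)$ and replacing its tail by a constant (using $0 \in L$) yields \eqref{upper_minimax_derivatives_multi-valued}. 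In the example above this procedure detects that \eqref{upper_minimax} fails at interior points of the rays, which is how the proposition survives. Your first implication is essentially this standard chaining argument and is fine in outline, except that the descent times furnished by the $\liminf$ in \eqref{derivatives_multi-valued} cannot be prescribed to be nodes of a fixed mesh; the ``polygonal'' construction should be replaced by a greedy exhaustion (supremum of reachable times) argument, closed again by compactness, lower semicontinuity, and a final limit in $\varepsilon$ using the closure of the velocity constraint.
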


            In condition \eqref{upper_minimax_derivatives_multi-valued}, the functional $\varphi_s$ is defined by $\varphi$ and $s$ according to \eqref{varphi_s}, and the set $B_{c_H}(t, x(\cdot))$ is introduced in \eqref{B}.

            Now, let us provide a similar criterion for an upper viscosity solution of \eqref{HJ}.
            \begin{proposition} \label{proposition_criteria_viscosity}
                For every functional $\varphi \colon G \to \mathbb{R}$, condition \eqref{upper_viscosity} is equivalent to the following one:
                \begin{equation} \label{upper_viscosity_derivatives_along_extension}
                    \inf_{z(\cdot) \in \Lip(t, x(\cdot))} \partial^- \varphi_s (t, x(\cdot); z(\cdot)) + H(t, x(\cdot), s)
                    \leq 0,
                    \quad \forall (t, x(\cdot)) \in G_0, \quad \forall s \in \mathbb{R}^n.
                \end{equation}
            \end{proposition}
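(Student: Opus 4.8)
The plan is to prove the equivalence directly from the definitions, since condition \eqref{upper_viscosity_derivatives_along_extension} is essentially a reformulation of \eqref{upper_viscosity} in which the quantification over the $ci$-subdifferential is replaced by the infimum that defines it. First I would fix an arbitrary point $(t, x(\cdot)) \in G_0$ and, for each $p \in \mathbb{R}^n$, introduce the extended-real quantity $\iota(p) \triangleq \inf_{z(\cdot) \in \Lip(t, x(\cdot))} \partial^- \varphi_p(t, x(\cdot); z(\cdot)) \in [-\infty, +\infty]$. Unravelling the definition \eqref{subdifferential_via_directional}, one sees immediately that $(p_0, p) \in D^- \varphi(t, x(\cdot))$ if and only if $p_0 \leq \iota(p)$; this is the only structural observation the whole argument rests on.

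For the implication \eqref{upper_viscosity_derivatives_along_extension} $\Rightarrow$ \eqref{upper_viscosity}, I would take an arbitrary pair $(p_0, p) \in D^- \varphi(t, x(\cdot))$, so that $p_0 \leq \iota(p)$, and apply \eqref{upper_viscosity_derivatives_along_extension} with $s = p$ to obtain $\iota(p) + H(t, x(\cdot), p) \leq 0$. Combining these two inequalities yields $p_0 + H(t, x(\cdot), p) \leq \iota(p) + H(t, x(\cdot), p) \leq 0$, which is precisely \eqref{upper_viscosity}.

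For the converse \eqref{upper_viscosity} $\Rightarrow$ \eqref{upper_viscosity_derivatives_along_extension}, I would fix $s \in \mathbb{R}^n$ and examine $\iota(s)$. If $\iota(s) = -\infty$, then \eqref{upper_viscosity_derivatives_along_extension} holds trivially. If $\iota(s)$ is finite, then $(\iota(s), s) \in D^- \varphi(t, x(\cdot))$ by the observation of the first paragraph, and applying \eqref{upper_viscosity} to this pair gives $\iota(s) + H(t, x(\cdot), s) \leq 0$. The remaining case $\iota(s) = +\infty$ is ruled out under \eqref{upper_viscosity}, since it would force $(p_0, s) \in D^- \varphi(t, x(\cdot))$ for every $p_0 \in \mathbb{R}$ and hence $p_0 + H(t, x(\cdot), s) \leq 0$ for all $p_0$, which is absurd.

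The argument uses no estimates and does not invoke Assumption \ref{assumption_H1_H2}, so there is no genuine analytic obstacle here; the proposition is a purely formal consequence of how the $ci$-subdifferential $D^-\varphi$ packages the directional infimum. The only point that requires a little care is the bookkeeping of the extended-real-valued infimum $\iota(\cdot)$, in particular correctly disposing of the degenerate cases $\iota(s) \in \{-\infty, +\infty\}$, which is exactly what guarantees that the equivalence holds for every functional $\varphi \colon G \to \mathbb{R}$ with no additional regularity hypotheses.
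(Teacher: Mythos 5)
Your proof is correct. The key structural observation --- that, by \eqref{subdifferential_via_directional}, $(p_0, p) \in D^- \varphi(t, x(\cdot))$ if and only if $p_0 \leq \iota(p) \triangleq \inf_{z(\cdot) \in \Lip(t, x(\cdot))} \partial^- \varphi_p(t, x(\cdot); z(\cdot))$ --- is exactly what drives the paper's argument as well, and your treatment of the implication \eqref{upper_viscosity} $\Rightarrow$ \eqref{upper_viscosity_derivatives_along_extension}, including the case analysis disposing of $\iota(s) = -\infty$ and ruling out $\iota(s) = +\infty$ by the absurdity of $p_0 + H(t, x(\cdot), s) \leq 0$ for all real $p_0$, coincides with the paper's almost line by line. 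Where you diverge is in the converse implication: you prove \eqref{upper_viscosity_derivatives_along_extension} $\Rightarrow$ \eqref{upper_viscosity} directly from the inequality $p_0 \leq \iota(p)$ together with \eqref{upper_viscosity_derivatives_along_extension} applied at $s = p$, whereas the paper never proves this implication in isolation. Instead, it proves Propositions \ref{proposition_criteria_viscosity} and \ref{proposition_test_functionals} simultaneously by a cycle of three implications: \eqref{upper_viscosity} $\Rightarrow$ \eqref{upper_viscosity_derivatives_along_extension} $\Rightarrow$ (the test-functional condition of Proposition \ref{proposition_test_functionals}, via a perturbation argument with a $ci$-smooth $\psi$) $\Rightarrow$ \eqref{upper_viscosity}. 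Your route is more elementary and self-contained for Proposition \ref{proposition_criteria_viscosity} alone --- it is purely formal bookkeeping of the extended-real infimum, with no test functionals and, as you note, no use of Assumption \ref{assumption_H1_H2} --- while the paper's detour through the cyclic argument buys the criterion in terms of $ci$-smooth test functionals at the same cost.
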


            The proof is given in \ref{appendix_proofs_1}.

            Note that \eqref{upper_minimax_derivatives_multi-valued} implies \eqref{upper_viscosity_derivatives_along_extension}, while the reverse implication is not trivial.
            However, Theorem \ref{theorem_main} allows us to conclude that, under Assumption \ref{assumption_H1_H2}, these conditions are actually equivalent for every functional $\varphi \in \Phi_+$.

            In addition, it seems interesting to obtain also a criterion for condition \eqref{upper_viscosity} in terms of $ci$-smooth test functionals.
            \begin{proposition} \label{proposition_test_functionals}
                For every functional $\varphi \colon G \to \mathbb{R}$, condition \eqref{upper_viscosity} is equivalent to the following one:
                for every point $(t, x(\cdot)) \in G_0$ and every $ci$-smooth functional $\psi \colon G \to \mathbb{R}$, if, for any function $z(\cdot) \in \Lip(t, x(\cdot))$, there exists $\delta_z \in (0, T - t]$ such that
                \begin{equation} \label{upper_viscosity_test_functions_condition}
                    \varphi(t, x(\cdot)) - \psi(t, x(\cdot))
                    \leq \varphi(\tau, z_\tau(\cdot)) - \psi(\tau, z_\tau(\cdot)),
                    \quad \forall \tau \in [t, t + \delta_z],
                \end{equation}
                then the inequality below is fulfilled:
                \begin{equation} \label{upper_viscosity_test_functions_statement}
                    \partial_t \psi(t, x(\cdot)) + H \bigl( t, x(\cdot), \nabla \psi(t, x(\cdot)) \bigr)
                    \leq 0.
                \end{equation}
            \end{proposition}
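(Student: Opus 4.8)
The plan is to prove the two implications separately, relying throughout on the elementary observation that, for any fixed $a \in \mathbb{R}$ and $p \in \mathbb{R}^n$, the affine functional $\psi(\tau, y(\cdot)) \triangleq a \tau + \langle p, y(\tau) \rangle$, $(\tau, y(\cdot)) \in G$, is $ci$-smooth, with $\partial_t \psi \equiv a$ and $\nabla \psi \equiv p$. Indeed, $\psi$ is continuous with respect to $\rho_\infty$, and for every $(\tau, y(\cdot)) \in G_0$ and every $z(\cdot) \in \Lip(\tau, y(\cdot))$ the difference quotient in \eqref{ci_derivatives} vanishes identically, so that the defining limit is trivially zero and the $ci$-derivatives are the indicated constants. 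These functionals will serve as the only test functionals I need.

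For the implication \eqref{upper_viscosity} $\Rightarrow$ (the test condition), I would fix $(t, x(\cdot)) \in G_0$ and a $ci$-smooth $\psi$ for which the local-minimality hypothesis \eqref{upper_viscosity_test_functions_condition} holds for every $z(\cdot) \in \Lip(t, x(\cdot))$, and set $(p_0, p) \triangleq (\partial_t \psi(t, x(\cdot)), \nabla \psi(t, x(\cdot)))$. The first step is to show that $(p_0, p) \in D^- \varphi(t, x(\cdot))$. To this end, fix $z(\cdot) \in \Lip(t, x(\cdot))$; combining the inequality \eqref{upper_viscosity_test_functions_condition}, valid on some $[t, t + \delta_z]$, with the $ci$-differentiability expansion $\psi(\tau, z_\tau(\cdot)) - \psi(t, x(\cdot)) = p_0 (\tau - t) + \langle p, z(\tau) - x(t) \rangle + o(\tau - t)$ shows that the difference quotient appearing in \eqref{subdifferential} is bounded below by $o(\tau - t)/(\tau - t)$, whence its lower limit as $\tau \downarrow t$ is nonnegative. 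As $z(\cdot)$ was arbitrary, $(p_0, p) \in D^- \varphi(t, x(\cdot))$, and then \eqref{upper_viscosity} yields precisely \eqref{upper_viscosity_test_functions_statement}.

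For the reverse implication I would argue by contradiction. Assume the test condition and suppose \eqref{upper_viscosity} fails, i.e.\ there are $(t, x(\cdot)) \in G_0$ and $(p_0, p) \in D^- \varphi(t, x(\cdot))$ with $p_0 + H(t, x(\cdot), p) > 0$. I would choose any $a$ with $- H(t, x(\cdot), p) < a < p_0$ and consider the affine, hence $ci$-smooth, functional $\psi(\tau, y(\cdot)) \triangleq a \tau + \langle p, y(\tau) \rangle$, so that $\partial_t \psi(t, x(\cdot)) = a$ and $\nabla \psi(t, x(\cdot)) = p$. The crucial point is that, for every $z(\cdot) \in \Lip(t, x(\cdot))$, the membership $(p_0, p) \in D^- \varphi(t, x(\cdot))$ gives via \eqref{subdifferential} that the lower limit of the quantity $(\varphi(\tau, z_\tau(\cdot)) - \varphi(t, x(\cdot)) - a(\tau - t) - \langle p, z(\tau) - x(t) \rangle)/(\tau - t)$ as $\tau \downarrow t$ is at least $p_0 - a > 0$; hence this quantity is strictly positive for all $\tau$ in some interval $(t, t + \delta_z]$, which is exactly the hypothesis \eqref{upper_viscosity_test_functions_condition} for the present $\psi$. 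The test condition then forces $a + H(t, x(\cdot), p) \le 0$, contradicting the choice $a > - H(t, x(\cdot), p)$. This contradiction proves \eqref{upper_viscosity}.

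I expect the only genuinely delicate point to be the passage, in the reverse implication, from the lower-limit inequality defining $D^- \varphi$ to an honest one-sided local minimum of $\varphi - \psi$ along each extension. A bare affine functional with temporal slope $p_0$ would only touch $\varphi$ asymptotically, since the lower limit in \eqref{subdifferential} may equal $0$ with arbitrarily slow approach, and no time-only correction repairs this uniformly over all directions $z(\cdot)$. The device that resolves this is to lower the slope from $p_0$ to a strictly smaller $a$: this converts $\liminf \ge 0$ into $\liminf \ge p_0 - a > 0$, which does produce a genuine strict local minimum in each direction, while the freedom $a > - H(t, x(\cdot), p)$ is precisely what keeps the resulting inequality in contradiction with the assumed failure of \eqref{upper_viscosity}. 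Notably, no compactness or variational-principle machinery is required here, because the test condition must hold for \emph{every} $ci$-smooth functional and we need only exhibit one convenient $\psi$ for each violating triple $(t, x(\cdot), (p_0, p))$.
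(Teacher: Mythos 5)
Your proof is correct, but it follows a genuinely different route from the paper's. The paper proves Propositions \ref{proposition_criteria_viscosity} and \ref{proposition_test_functionals} simultaneously via a cycle of three implications: \eqref{upper_viscosity} $\Rightarrow$ \eqref{upper_viscosity_derivatives_along_extension} $\Rightarrow$ (test condition) $\Rightarrow$ \eqref{upper_viscosity}, so its forward direction passes through the intermediate directional-derivative criterion, picking an approximate-minimizing direction $z(\cdot)$ from \eqref{upper_viscosity_derivatives_along_extension} and transferring the liminf inequality from $\varphi$ to $\psi$ via \eqref{upper_viscosity_test_functions_condition}. You instead prove the forward direction head-on by observing that the local-minimality hypothesis together with $ci$-differentiability of $\psi$ forces $\bigl( \partial_t \psi(t, x(\cdot)), \nabla \psi(t, x(\cdot)) \bigr) \in D^- \varphi(t, x(\cdot))$ — a clean structural fact the paper never isolates — after which \eqref{upper_viscosity} applies immediately. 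Your reverse direction is in substance identical to the paper's: the paper takes $\psi(\tau, y(\cdot)) = (p_0 - \varepsilon)(\tau - t) + \langle p, y(\tau) - x(t) \rangle$ and lets $\varepsilon \downarrow 0$ directly, while you lower the temporal slope to an intermediate $a \in (-H(t, x(\cdot), p),\, p_0)$ and argue by contradiction; both hinge on the same device of strictly decreasing the time-slope to convert the liminf inequality in \eqref{subdifferential} into a genuine one-sided local minimum along each extension, and your closing discussion of why this is necessary matches the paper's implicit reasoning. What each approach buys: the paper's cycle yields both criteria (Propositions \ref{proposition_criteria_viscosity} and \ref{proposition_test_functionals}) with three implications total, while your argument is self-contained for Proposition \ref{proposition_test_functionals} alone and never needs condition \eqref{upper_viscosity_derivatives_along_extension}, making the subgradient mechanism behind the test-functional characterization more transparent.
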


            The proof is given in \ref{appendix_proofs_1}.

            Note that the condition formulated in Proposition \ref{proposition_test_functionals} requires the point $(t, x(\cdot))$ to be a local minimum of the difference between the functional $\varphi$ and the test functional $\psi$ only along every function $z(\cdot) \in \Lip(t, x(\cdot))$, which correlates with the definition of $ci$-differentiability (see \eqref{ci_derivatives}).

            Summarizing, we arrive at
            \begin{corollary} \label{corollary_criteria}
                Under Assumption \ref{assumption_H1_H2}, conditions \eqref{upper_minimax}, \eqref{upper_viscosity}, \eqref{upper_minimax_derivatives_multi-valued}, \eqref{upper_viscosity_derivatives_along_extension}, and the condition from Proposition \ref{proposition_test_functionals} are equivalent for every functional $\varphi \in \Phi_+$.
            \end{corollary}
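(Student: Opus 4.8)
The plan is to obtain Corollary~\ref{corollary_criteria} simply by assembling the equivalences already at our disposal, using the pair of conditions \eqref{upper_minimax} and \eqref{upper_viscosity} as the hub through which all the remaining conditions are linked. The only hypothesis-matching point that requires attention is that Proposition~\ref{proposition_criteria_minimax} is stated for lower semi-continuous functionals rather than for functionals in $\Phi_+$.

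First I would recall the observation made in Section~\ref{subsection_minimax} that every $\varphi \in \Phi_+$ is lower semi-continuous with respect to the basic metric $\rho_\infty$. Consequently, Proposition~\ref{proposition_criteria_minimax} is applicable to such a $\varphi$, and it yields the equivalence of \eqref{upper_minimax} and \eqref{upper_minimax_derivatives_multi-valued}. Next, the reformulation of Theorem~\ref{theorem_main} stated immediately after it asserts precisely that, under Assumption~\ref{assumption_H1_H2}, conditions \eqref{upper_minimax} and \eqref{upper_viscosity} are equivalent for every $\varphi \in \Phi_+$; this provides the central link.

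From \eqref{upper_viscosity} I would then branch out using the two propositions that hold for an arbitrary functional $\varphi$ with no semi-continuity requirement: Proposition~\ref{proposition_criteria_viscosity} gives the equivalence of \eqref{upper_viscosity} and \eqref{upper_viscosity_derivatives_along_extension}, while Proposition~\ref{proposition_test_functionals} gives the equivalence of \eqref{upper_viscosity} and the test-functional condition formulated therein. Combining these four equivalences, every one of the five conditions is equivalent to \eqref{upper_minimax} (equivalently, to \eqref{upper_viscosity}), and hence they are all mutually equivalent for $\varphi \in \Phi_+$, which is the assertion of the corollary.

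I do not anticipate any genuine obstacle here, since all the substantive work is contained in Theorem~\ref{theorem_main} and in the three propositions; the corollary is a purely organizational consequence. The only thing to verify carefully is that the lower semi-continuity hypothesis of Proposition~\ref{proposition_criteria_minimax} is automatically met by membership in $\Phi_+$, and one should note that the restriction to $\Phi_+$ is needed only in order to invoke Theorem~\ref{theorem_main} and Proposition~\ref{proposition_criteria_minimax}, the other two propositions being valid without it.
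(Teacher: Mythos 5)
Your proof is correct and matches the paper's approach exactly: the corollary is stated there as a summary ("Summarizing, we arrive at") of precisely the same four links you assemble — Proposition \ref{proposition_criteria_minimax} (applicable since any $\varphi \in \Phi_+$ is lower semi-continuous with respect to $\rho_\infty$), the reformulation of Theorem \ref{theorem_main}, and Propositions \ref{proposition_criteria_viscosity} and \ref{proposition_test_functionals}. Your hypothesis-matching remark about lower semi-continuity is the right (and only) point needing care.
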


            \begin{remark}
                Based on the idea of \cite{Soner_1988}, another approach to the development of the theory of viscosity solutions of the Cauchy problem \eqref{HJ}, \eqref{boundary_condition} was proposed in \cite{Lukoyanov_2007_IMM_Eng}.
                In that paper, the definition of a viscosity solution is given in terms of $ci$-smooth test functionals and involves a certain expanding sequence of compact subsets of $G$ which union is dense in $G$.
                Under mild assumptions, it is proved that a viscosity solution is a minimax solution (defined with the help of conditions \eqref{upper_minimax} and \eqref{lower_minimax}) and that a viscosity solution is unique.
                Relying on these facts and a uniqueness result for minimax solutions, it is obtained that the minimax and viscosity solutions coincide.
                However, apart from this connection via minimax solutions and uniqueness results, it is not clear how the definition of a viscosity solution from \cite{Lukoyanov_2007_IMM_Eng} and that introduced in the present paper relate to each other.
            \end{remark}

        \subsubsection{Case of Lipschitz continuous functionals}
        \label{subsubsection_Lipschitz}

            In this section, we concretize infinitesimal conditions \eqref{upper_viscosity} and \eqref{upper_viscosity_derivatives_along_extension} in a particular case when $\varphi$ satisfies an additional local Lipschitz condition in the second variable with respect to the auxiliary metric $\rho_1$ from \eqref{rho_1}.

            Let us denote by $\Phi_{\Lip}$ the set of all functionals $\varphi \colon G \to \mathbb{R}$ that satisfy the following {\it local Lipschitz condition}:
            for any $\alpha > 0$, there exists $\lambda_\varphi > 0$ such that
            \begin{equation} \label{Phi_Lip}
                |\varphi(t, x(\cdot)) - \varphi(t, y(\cdot))|
                \leq \lambda_\varphi \rho_1 \bigl( (t, x(\cdot)), (t, y(\cdot)) \bigr),
                \quad (t, x(\cdot)), (t, y(\cdot)) \in G(\alpha).
            \end{equation}

            Let $\varphi \colon G \to \mathbb{R}$, $(t, x(\cdot)) \in G_0$, and $l \in \mathbb{R}^n$.
            The {\it lower right derivative} of the functional $\varphi$ at the point $(t, x(\cdot))$ in the (single-valued) {\it direction} $l$ is defined by
            \begin{equation*}
                \partial^-_\ast \varphi(t, x(\cdot); l)
                \triangleq \partial^- \varphi(t, x(\cdot); z^{[l]}(\cdot)),
            \end{equation*}
            where the function $z^{[l]}(\cdot) \in \Lip(t, x(\cdot))$ is given by
            \begin{equation} \label{z^l}
                z^{[l]}(\tau)
                \triangleq x(t) + (\tau - t) l,
                \quad \forall \tau \in [t, T],
            \end{equation}
            and $\partial^- \varphi(t, x(\cdot); z^{[l]}(\cdot))$ is the lower right derivative along the function $z^{[l]}(\cdot)$ (see \eqref{derivatives_along_extenstion}).

            Let us first recall that, according to \cite[Section 5.4]{Lukoyanov_2006_IMM_Eng} (see also \cite[Proposition 4]{Lukoyanov_Plaksin_2019_MIAN_Eng}), for every functional $\varphi \in \Phi_{\Lip}$, every point $(t, x(\cdot)) \in G_0$, and every non-empty convex compact set $L \subset \mathbb{R}^n$, the equality below holds:
            \begin{equation} \label{derivatives_multivalued_via_single_valued}
                \rd^- \varphi(t, x(\cdot); L)
                = \inf_{l \in L} \partial^-_\ast \varphi(t, x(\cdot); l).
            \end{equation}
            As a consequence, for every $\varphi \in \Phi_{\Lip}$, condition \eqref{upper_minimax_derivatives_multi-valued} is equivalent to the condition
            \begin{equation} \label{condition_Lip_old}
                \begin{array}{c}
                    \displaystyle
                    \inf_{l \in B_{c_H}(t, x(\cdot))} \bigl( \partial^-_\ast \varphi (t, x(\cdot); l) - \langle s, l \rangle \bigr)
                    + H(t, x(\cdot), s)
                    \leq 0, \\[0.5em]
                    \forall (t, x(\cdot)) \in G_0, \quad \forall s \in \mathbb{R}^n.
                \end{array}
            \end{equation}

            Further, let us note the following property.
            \begin{proposition} \label{proposition_finite_derivatives}
                Let $\varphi \in \Phi_{\Lip}$ and $(t, x(\cdot)) \in G_0$.
                Then, for every $z(\cdot) \in \Lip(t, x(\cdot))$, there exists $l \in \mathbb{R}^n$ such that
                \begin{equation} \label{proposition_finite_derivatives_0}
                    \partial^-_\ast \varphi (t, x(\cdot); l)
                    \leq \partial^- \varphi (t, x(\cdot); z(\cdot)).
                \end{equation}
            \end{proposition}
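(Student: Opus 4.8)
The plan is to realize the value $\partial^- \varphi(t, x(\cdot); z(\cdot))$ along a suitable sequence of times and to show that the straight-line extension in the direction of the corresponding \emph{average velocity} of $z(\cdot)$ produces the same lower limit. Set $D \triangleq \partial^- \varphi(t, x(\cdot); z(\cdot))$ and, appealing to \eqref{derivatives_along_extenstion}, choose times $\tau_k \in (t, T]$ with $\tau_k \downarrow t$ such that
\[
\frac{\varphi(\tau_k, z_{\tau_k}(\cdot)) - \varphi(t, x(\cdot))}{\tau_k - t} \to D \quad \text{as } k \to \infty .
\]
Let $l_k \triangleq (z(\tau_k) - x(t)) / (\tau_k - t)$ be the average velocity of $z(\cdot)$ over $[t, \tau_k]$. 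Since $z(\cdot)$ is Lipschitz continuous on $[t, T]$ with some constant $\ell > 0$, one has $\|l_k\| \le \ell$ for all $k$, so, passing to a subsequence (not relabelled), I may assume $l_k \to l$ for some $l \in \mathbb{R}^n$. This $l$ is the candidate direction, with the straight-line extension $z^{[l]}(\cdot)$ given by $z^{[l]}(\tau) = x(t) + (\tau - t) l$ for $\tau \in [t, T]$.

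The key step is to compare $\varphi$ along $z(\cdot)$ and along $z^{[l]}(\cdot)$. Because $z(\tau_k) \to x(t)$ and $z^{[l]}(\tau_k) \to x(t)$, both points $(\tau_k, z_{\tau_k}(\cdot))$ and $(\tau_k, z^{[l]}_{\tau_k}(\cdot))$ lie in a common set $G(\alpha)$ for all sufficiently large $k$, so the local Lipschitz condition \eqref{Phi_Lip} applies with a single constant $\lambda_\varphi$:
\[
\bigl| \varphi(\tau_k, z^{[l]}_{\tau_k}(\cdot)) - \varphi(\tau_k, z_{\tau_k}(\cdot)) \bigr|
\le \lambda_\varphi \, \rho_1 \bigl( (\tau_k, z^{[l]}_{\tau_k}(\cdot)), (\tau_k, z_{\tau_k}(\cdot)) \bigr) .
\]
Using the explicit form \eqref{rho_1} of $\rho_1$ and the fact that $z(\cdot)$ and $z^{[l]}(\cdot)$ coincide on $[-h, t]$, the distance on the right-hand side equals
\[
\| d_k \| + \int_t^{\tau_k} \| z(\xi) - x(t) - (\xi - t) l \| \, \rd \xi + (T - \tau_k) \| d_k \| ,
\]
where $d_k \triangleq z(\tau_k) - x(t) - (\tau_k - t) l = (\tau_k - t)(l_k - l)$; the last summand arises from the values of the restrictions on $(\tau_k, T]$, which are frozen at their common shape endpoints.

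It then remains to divide by $\tau_k - t$ and pass to the limit. Both boundary terms contribute multiples of $\|d_k\| / (\tau_k - t) = \|l_k - l\| \to 0$, while the integrand is bounded by $(\xi - t)(\ell + \|l\|)$, so the integral term is of order $(\tau_k - t)^2$ and its contribution vanishes as well. Hence $\rho_1((\tau_k, z^{[l]}_{\tau_k}(\cdot)), (\tau_k, z_{\tau_k}(\cdot))) / (\tau_k - t) \to 0$, which yields $(\varphi(\tau_k, z^{[l]}_{\tau_k}(\cdot)) - \varphi(t, x(\cdot))) / (\tau_k - t) \to D$. Since, by \eqref{derivatives_along_extenstion}, $\partial^-_\ast \varphi(t, x(\cdot); l) = \partial^- \varphi(t, x(\cdot); z^{[l]}(\cdot))$ is a lower limit over \emph{all} $\tau \downarrow t$, it does not exceed the limit along the particular sequence $\tau_k$, and \eqref{proposition_finite_derivatives_0} follows.

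The only delicate point is the non-vanishing factor $T - \tau_k \approx T - t > 0$ multiplying $\|d_k\|$ in the tail term. This is precisely why $l$ must be taken as the limit of the average velocities $l_k$, so that $\|d_k\| = (\tau_k - t)\|l_k - l\| = o(\tau_k - t)$ rather than merely $O(\tau_k - t)$; and it is here that both the integral-norm structure of $\rho_1$ (as opposed to the uniform metric $\rho_\infty$) and the Lipschitz property of $z(\cdot)$ enter essentially.
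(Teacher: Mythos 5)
Your proposal is correct and takes essentially the same route as the paper's proof: both choose a sequence $\tau_k \downarrow t$ realizing the lower limit, extract a convergent subsequence of the average velocities $l_k = (z(\tau_k) - x(t))/(\tau_k - t)$ (bounded by the Lipschitz constant of $z(\cdot)$), and use the local Lipschitz condition \eqref{Phi_Lip} with respect to $\rho_1$ to show that the difference quotients along $z^{[l]}(\cdot)$ and along $z(\cdot)$ differ by a term of order $\|l_k - l\| + (\tau_k - t)$, which vanishes. The only cosmetic difference is that you compute the $\rho_1$-distance between $(\tau_k, z_{\tau_k}(\cdot))$ and $(\tau_k, z^{[l]}_{\tau_k}(\cdot))$ exactly before estimating it, whereas the paper bounds it directly; the resulting estimates coincide.
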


            The proof is given in \ref{appendix_proofs_2}.

            Applying Proposition \ref{proposition_finite_derivatives}, we derive that, for every $\varphi \in \Phi_{\Lip}$ and every $(t, x(\cdot)) \in G_0$, the equalities below take place:
            \begin{equation*}
                \inf_{z(\cdot) \in \Lip(t, x(\cdot))} \partial^- \varphi (t, x(\cdot); z(\cdot))
                = \inf_{l \in \mathbb{R}^n} \partial^-_\ast \varphi (t, x(\cdot); l)
            \end{equation*}
            and
            \begin{equation} \label{subdifferential_finite}
                D^- \varphi (t, x(\cdot))
                = \bigl\{ (p_0, p) \in \mathbb{R} \times \mathbb{R}^n \bigm|
                p_0 + \langle p, l \rangle
                \leq \partial^-_\ast \varphi (t, x(\cdot); l),
                \, \forall l \in \mathbb{R}^n \bigr\}.
            \end{equation}
            Thus, by Corollary \ref{corollary_criteria}, we have
            \begin{theorem} \label{theorem_finite}
                Let Assumption \ref{assumption_H1_H2} hold and let $\varphi \in \Phi_+ \cap \Phi_{\Lip}$.
                Then, condition \eqref{upper_minimax} is equivalent to each of the following two conditions:

                \smallskip

                \noindent {\rm (i)}
                    The inequality below is valid:
                    \begin{equation} \label{condition_Lip_new}
                        \inf_{l \in \mathbb{R}^n} \bigl( \partial^-_\ast \varphi (t, x(\cdot); l) - \langle s, l \rangle \bigr)
                        + H(t, x(\cdot), s)
                        \leq 0,
                        \quad \forall (t, x(\cdot)) \in G_0, \quad \forall s \in \mathbb{R}^n.
                    \end{equation}

                \smallskip

                \noindent {\rm (ii)}
                    Condition \eqref{upper_viscosity} holds with the $ci$-subdifferential $D^- \varphi(t, x(\cdot))$ calculated by \eqref{subdifferential_finite}.
            \end{theorem}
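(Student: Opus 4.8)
The plan is to obtain both equivalences by reducing the infinite-dimensional objects occurring in Corollary~\ref{corollary_criteria} (the infimum over $\Lip(t, x(\cdot))$ and the full $ci$-subdifferential~\eqref{subdifferential_via_directional}) to the finite-dimensional quantities appearing in conditions~(i) and~(ii), the reduction being powered throughout by the local Lipschitz hypothesis via Proposition~\ref{proposition_finite_derivatives}. A first routine observation is that the class $\Phi_{\Lip}$ is stable under the shift $\varphi \mapsto \varphi_s$: since $|\langle s, x(t) \rangle - \langle s, y(t) \rangle| \le \|s\|\, \|x(t) - y(t)\| \le \|s\|\, \rho_1((t, x(\cdot)), (t, y(\cdot)))$ by~\eqref{rho_1}, we have $\varphi_s \in \Phi_{\Lip}$ for every $s \in \mathbb{R}^n$, and likewise $\varphi_p \in \Phi_{\Lip}$. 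I would also record the elementary identity $\partial^-_\ast \varphi_s(t, x(\cdot); l) = \partial^-_\ast \varphi(t, x(\cdot); l) - \langle s, l \rangle$, which follows by inserting $z^{[l]}(\tau) - x(t) = (\tau - t) l$ into~\eqref{derivatives_along_extenstion} applied to $\varphi_s$.

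For part~(i), I would start from the equivalence of \eqref{upper_minimax} and \eqref{upper_viscosity_derivatives_along_extension} granted by Corollary~\ref{corollary_criteria}, and then show that, for $\varphi \in \Phi_{\Lip}$, one has $\inf_{z(\cdot) \in \Lip(t, x(\cdot))} \partial^- \varphi_s(t, x(\cdot); z(\cdot)) = \inf_{l \in \mathbb{R}^n} \partial^-_\ast \varphi_s(t, x(\cdot); l)$. The inequality ``$\le$'' is immediate because each linear extension $z^{[l]}(\cdot)$ belongs to $\Lip(t, x(\cdot))$, so the left-hand infimum ranges over a larger family. The inequality ``$\ge$'' is exactly the content of Proposition~\ref{proposition_finite_derivatives} applied to $\varphi_s$: every $z(\cdot) \in \Lip(t, x(\cdot))$ admits a direction $l$ with $\partial^-_\ast \varphi_s(t, x(\cdot); l) \le \partial^- \varphi_s(t, x(\cdot); z(\cdot))$, whence the right-hand infimum does not exceed the left-hand one. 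Substituting this identity together with the shift identity into~\eqref{upper_viscosity_derivatives_along_extension} turns it verbatim into~\eqref{condition_Lip_new}, yielding the equivalence.

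For part~(ii), I would verify the finite description~\eqref{subdifferential_finite} of the $ci$-subdifferential and then invoke Corollary~\ref{corollary_criteria} once more. The inclusion of $D^- \varphi(t, x(\cdot))$ into the right-hand set of~\eqref{subdifferential_finite} is obtained by testing the defining condition of~\eqref{subdifferential_via_directional} against $z^{[l]}(\cdot)$ alone: since $\partial^- \varphi_p(t, x(\cdot); z^{[l]}(\cdot)) = \partial^-_\ast \varphi(t, x(\cdot); l) - \langle p, l \rangle$, the requirement $p_0 \le \partial^- \varphi_p(t, x(\cdot); z^{[l]}(\cdot))$ rearranges to $p_0 + \langle p, l \rangle \le \partial^-_\ast \varphi(t, x(\cdot); l)$. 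For the reverse inclusion I would again apply Proposition~\ref{proposition_finite_derivatives} to $\varphi_p$: given $(p_0, p)$ satisfying the finite inequality and an arbitrary $z(\cdot)$, choose $l$ with $\partial^-_\ast \varphi_p(t, x(\cdot); l) \le \partial^- \varphi_p(t, x(\cdot); z(\cdot))$; the finite inequality for that $l$ gives $p_0 \le \partial^-_\ast \varphi(t, x(\cdot); l) - \langle p, l \rangle = \partial^-_\ast \varphi_p(t, x(\cdot); l) \le \partial^- \varphi_p(t, x(\cdot); z(\cdot))$, so $(p_0, p) \in D^- \varphi(t, x(\cdot))$. With~\eqref{subdifferential_finite} established, condition~\eqref{upper_viscosity} computed through~\eqref{subdifferential_finite} is literally condition~\eqref{upper_viscosity}, which Corollary~\ref{corollary_criteria} equates with~\eqref{upper_minimax}.

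The only place where genuine content enters is Proposition~\ref{proposition_finite_derivatives}, which supplies the ``$\ge$'' direction in both reductions; everything else is the bookkeeping above together with the equivalences already collected in Corollary~\ref{corollary_criteria}. Thus the \emph{main obstacle} lies not in assembling this theorem but in Proposition~\ref{proposition_finite_derivatives} itself --- namely, in approximating the behaviour near $t$ of an arbitrary Lipschitz extension $z(\cdot)$ by that of a single linear direction $l$, with the discrepancy controlled by the $\rho_1$-local Lipschitz estimate~\eqref{Phi_Lip}, which is precisely why the hypothesis $\varphi \in \Phi_{\Lip}$ cannot be dropped.
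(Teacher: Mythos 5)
Your proposal is correct and follows essentially the same route as the paper: the paper likewise derives the identity $\inf_{z(\cdot) \in \Lip(t, x(\cdot))} \partial^- \varphi(t, x(\cdot); z(\cdot)) = \inf_{l \in \mathbb{R}^n} \partial^-_\ast \varphi(t, x(\cdot); l)$ and the finite description \eqref{subdifferential_finite} from Proposition \ref{proposition_finite_derivatives} (the trivial ``$\le$'' direction coming from the inclusion of the linear extensions $z^{[l]}(\cdot)$ in $\Lip(t, x(\cdot))$), and then concludes by Corollary \ref{corollary_criteria}. Your explicit bookkeeping --- the stability of $\Phi_{\Lip}$ under $\varphi \mapsto \varphi_s$ and the shift identity $\partial^-_\ast \varphi_s(t, x(\cdot); l) = \partial^-_\ast \varphi(t, x(\cdot); l) - \langle s, l \rangle$ --- is left implicit in the paper but is exactly what its argument requires.
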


            In other words, if we want to verify infinitesimal conditions \eqref{upper_viscosity} and \eqref{upper_viscosity_derivatives_along_extension} for a specific functional $\varphi \in \Phi_{\Lip}$, we do not need to consider all functions $z(\cdot)$ from $\Lip(t, x(\cdot))$, and we can restrict ourselves only to the (finite-dimensional) set of the functions $z^{[l]}(\cdot)$ for all $l \in \mathbb{R}^n$ (see \eqref{z^l}).

            Note that condition \eqref{condition_Lip_new} clearly follows from condition \eqref{condition_Lip_old}, but, in order to obtain the reverse implication, we use Corollary \ref{corollary_criteria} of Theorem \ref{theorem_main}.

            \begin{remark}
                In \cite{Plaksin_2020_JOTA}, to get a result similar to Theorem \ref{theorem_finite}, it was suggested to consider the HJ equation \eqref{HJ}, as well as all conditions used in the definitions of minimax and viscosity solutions, in the wider space of all piecewise continuous functions.
                The proof technique used in the present paper does not require such an extension of the space $G$.
            \end{remark}

            An existence result for a viscosity solution of the Cauchy problem \eqref{HJ}, \eqref{boundary_condition} that belongs to $\Phi_{\Lip}$ is given in the next section.

    \subsection{Application to optimal control problems}
    \label{subsection_optimal_control_problem}

        Path-dependent HJ equations of form \eqref{HJ} arise from optimal control problems and differential games for time-delay systems (see, e.g., \cite{Kim_1999,Lukoyanov_2000_JAMM,Lukoyanov_2003_2,Lukoyanov_2010_IMM_Eng_1,Kaise_2015,Bayraktar_Keller_2018}).
        In this regard, Theorem \ref{theorem_main} can be applied to characterize the value functionals in such problems under quite general assumptions.
        The goal of this section is to illustrate this fact and, in particular, to show that the requirements $\varphi \in \Phi_-$ and $\varphi \in \Phi_+$ in Definitions \ref{definition_minimax} and \ref{definition_viscosity} are not too restrictive.

        Let us consider the {\it optimal control problem} described by an {\it initial data} $(t, x(\cdot)) \in G$, the {\it dynamic equation}
        \begin{equation} \label{system}
            \dot{z}(\tau)
            = f(\tau, z_\tau(\cdot), u(\tau))
            \text{ for a.e. } \tau \in [t, T]
        \end{equation}
        under the {\it initial condition}
        \begin{equation} \label{initial_condition}
            z_t(\cdot)
            = x(\cdot)
        \end{equation}
        and the {\it cost functional}
        \begin{equation} \label{cost_functional}
            J
            \triangleq \sigma(z(\cdot)) + \int_{t}^{T} \chi(\tau, z_\tau(\cdot), u(\tau)) \, \rd \tau
        \end{equation}
        to be minimized.
        Here, $\tau$ is time, $z(\tau) \in \mathbb{R}^n$ is the current state, $u(\tau) \in U$ is the current control, $U \subset \mathbb{R}^m$ is a compact set, $m \in \mathbb{N}$, and $z_\tau(\cdot)$ is the history of the system motion $z(\cdot)$ realized up to $\tau$ (see \eqref{x_t}).

        \begin{assumption} \label{assumption_optimal_control}
             The mappings $G \times U \ni ((t, x(\cdot)), u) \mapsto f(t, x(\cdot), u) \in \mathbb{R}^n$ from \eqref{system} and $\sigma: C([- h, T], \mathbb{R}^n) \to \mathbb{R}$ and $G \times U \ni ((t, x(\cdot)), u) \mapsto \chi(t, x(\cdot), u) \in \mathbb{R}$ from \eqref{cost_functional} satisfy the following conditions:

            \smallskip

            \noindent (i)
                The mappings $f$ and $\chi$ are continuous.

            \smallskip

            \noindent {\rm (ii)}
                There exists $c_{f, \chi} > 0$ such that, for all $(t, x(\cdot)) \in G$ and all $u \in U$,
                \begin{equation*}
                    \|f(t, x(\cdot), u)\|
                    + |\chi(t, x(\cdot), u)|
                    \leq c_{f, \chi} (1 + \|x(\cdot)\|_{[- h, t]}).
                \end{equation*}

            \smallskip

            \noindent {\rm (iii)}
                For any $\alpha > 0$, there exists $\lambda_{f, \chi} > 0$ such that
                \begin{multline*}
                    \|f(t, x(\cdot), u) - f(t, y(\cdot), u)\| + |\chi(t, x(\cdot), u) - \chi(t, y(\cdot), u)| \\
                    \leq \lambda_{f, \chi} \biggl( \|x(t) - y(t)\| + \|x(t - h) - y(t - h)\| + \int_{- h}^{t} \|x(\xi) - y(\xi)\| \, \rd \xi \biggr)
                \end{multline*}
                for all $(t, x(\cdot))$, $(t, y(\cdot)) \in G(\alpha)$ (see \eqref{G_alpha}) and all $u \in U$.

            \smallskip

            \noindent {\rm (iv)}
                For every $\alpha > 0$ and every $\varepsilon > 0$, there exists $\delta_\sigma > 0$ such that
                \begin{equation*}
                    |\sigma(x(\cdot)) - \sigma(y(\cdot))|
                    \leq \varepsilon
                \end{equation*}
                for all $(T, x(\cdot))$, $(T, y(\cdot)) \in G(\alpha)$ with $\rho_1 ((T, x(\cdot)), (T, y(\cdot))) \leq \delta_\sigma$ (see \eqref{rho_1}).
        \end{assumption}

        The set of {\it admissible controls} $\mathcal{U}(t)$ consists of all measurable functions $u \colon [t, T] \to U$.
        In view of conditions (i)--(iii) of Assumption \ref{assumption_optimal_control}, for every $u(\cdot) \in \mathcal{U}(t)$, there exists a unique {\it system motion} $z(\cdot) \triangleq z(\cdot; t, x(\cdot), u(\cdot))$, which is defined as a function from $\Lip(t, x(\cdot))$ that together with $u(\cdot)$ satisfies the dynamic equation \eqref{system} for a.e. $\tau \in [t, T]$.
        Let $J(t, x(\cdot), u(\cdot))$ be the corresponding value of the cost functional \eqref{cost_functional}.

        Then, the {\it value functional} $\varphi_0 \colon G \to \mathbb{R}$ is given by
        \begin{equation} \label{value}
            \varphi_0 (t, x(\cdot))
            \triangleq \inf_{u(\cdot) \in \mathcal{U}(t)} J(t, x(\cdot), u(\cdot)),
            \quad \forall (t, x(\cdot)) \in G.
        \end{equation}

        According to, e.g., \cite{Lukoyanov_2010_IMM_Eng_1}, the Bellman equation associated to the optimal control problem \eqref{system}--\eqref{cost_functional} is the path-dependent HJ equation \eqref{HJ} with the Hamiltonian
        \begin{equation} \label{H}
            H(t, x(\cdot),s)
            \triangleq \min_{u \in U} \bigl( \langle s, f(t, x(\cdot), u) \rangle + \chi(t, x(\cdot), u) \bigr),
            \quad \forall (t, x(\cdot)) \in G, \quad \forall s \in \mathbb{R}^n.
        \end{equation}
        In particular, it follows from \cite[Theorem 2]{Lukoyanov_2010_IMM_Eng_1} that, under Assumption \ref{assumption_optimal_control}, the value functional $\varphi_0$ is continuous and satisfies conditions \eqref{upper_minimax} and \eqref{lower_minimax} with $H$ from \eqref{H}.
        By construction, $\varphi_0$ meets the boundary condition \eqref{boundary_condition}.
        Moreover, we have
        \begin{proposition} \label{proposition_value_Phi}
            Under Assumption \ref{assumption_optimal_control}, the inclusion $\varphi_0 \in \Phi_+ \cap \Phi_-$ holds.
        \end{proposition}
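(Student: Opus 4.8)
The plan is to show that $\varphi_0$ is continuous with respect to the auxiliary metric $\rho_1$ on every bounded set $G(\alpha)$; since, by definition, membership in $\Phi_+$ (resp.\ $\Phi_-$) is exactly lower (resp.\ upper) semi-continuity along $\rho_1$-convergent sequences lying in some $G(\alpha)$, establishing this continuity yields $\varphi_0 \in \Phi_+ \cap \Phi_-$ simultaneously. Fix $\alpha > 0$. First I would record an a priori bound: by condition (ii) of Assumption \ref{assumption_optimal_control} together with Gronwall's inequality, there is $\alpha' = \alpha'(\alpha) > 0$ such that every motion $z(\cdot) = z(\cdot; t, x(\cdot), u(\cdot))$ starting from $(t, x(\cdot)) \in G(\alpha)$ satisfies $\|z(\cdot)\|_{[-h, T]} \le \alpha'$, uniformly in $u(\cdot) \in \mathcal{U}(t)$; in particular $\varphi_0$ is finite on $G(\alpha)$.

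The core is an equal-time continuous-dependence estimate. Given $(t, x(\cdot)), (t, y(\cdot)) \in G(\alpha)$ and a common control $u(\cdot) \in \mathcal{U}(t)$, let $z(\cdot), w(\cdot)$ be the corresponding motions, extended to $[-h, T]$ so that $g(\tau) \triangleq \|z(\tau) - w(\tau)\|$ equals $\|x(\tau) - y(\tau)\|$ for $\tau \in [-h, t]$. From the integral form of \eqref{system} and the Lipschitz bound in condition (iii) I would derive, for $\tau \in [t, T]$,
\begin{equation*}
    g(\tau)
    \le \|x(t) - y(t)\| + \lambda_{f, \chi} \int_t^\tau \Bigl( g(\xi) + g(\xi - h) + \int_{-h}^\xi g(\eta) \, \rd \eta \Bigr) \rd \xi.
\end{equation*}
The delicate point is the pointwise delay term $g(\xi - h) = \|z(\xi - h) - w(\xi - h)\|$, which is invisible to $\rho_1$ at a single instant. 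Under the time integral, however, the substitution $\eta = \xi - h$ turns $\int_t^\tau g(\xi - h)\,\rd\xi$ into $\int_{t-h}^{\tau - h} g(\eta)\,\rd\eta$, whose part over $[t-h, t]$ equals $\int_{t-h}^{t} \|x(\eta) - y(\eta)\|\,\rd\eta$ and is thus dominated by the integral term of $\rho_1$, while its part over $[t, \tau]$ feeds back into the Gronwall loop. Collecting terms and applying Gronwall's inequality, I expect $\max_{\tau \in [t,T]} g(\tau) \le C(\alpha)\, \Delta_0$, where $\Delta_0 \triangleq \|x(t) - y(t)\| + \int_{-h}^{t} \|x(\xi) - y(\xi)\|\,\rd\xi \le \rho_1((t, x(\cdot)), (t, y(\cdot)))$. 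The same $\rho_1$-compatible Lipschitz structure of $\chi$ then gives $\int_t^T |\chi(\xi, z_\xi(\cdot), u(\xi)) - \chi(\xi, w_\xi(\cdot), u(\xi))|\,\rd\xi \le C'(\alpha)\,\Delta_0$; and, since $(T, z(\cdot)), (T, w(\cdot)) \in G(\alpha')$ with $\rho_1((T, z(\cdot)), (T, w(\cdot))) \le C''(\alpha)\,\Delta_0$, the uniform continuity of $\sigma$ from condition (iv) yields $|\sigma(z(\cdot)) - \sigma(w(\cdot))| \le \omega(\Delta_0)$ for some modulus $\omega$. Hence $|J(t, x(\cdot), u(\cdot)) - J(t, y(\cdot), u(\cdot))| \le \omega_\alpha(\Delta_0)$, uniformly in $u(\cdot)$.

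To drop the equal-time assumption I would absorb a time shift $|t - \tau|$ by means of the a priori growth bound. If $\tau > t$, running the $z$-motion from $(t, x(\cdot))$ up to $\tau$ produces $(\tau, z_\tau(\cdot))$ with $\|z(\tau) - x(t)\| \le c_{f, \chi}(1 + \alpha')(\tau - t)$, so that $\rho_1((\tau, z_\tau(\cdot)), (t, x(\cdot))) \le C(\alpha)(\tau - t)$, while the restricted control $u(\cdot)|_{[\tau, T]}$ reproduces the tail of $z(\cdot)$ and changes the cost only by $\int_t^\tau |\chi|\,\rd\xi \le c_{f, \chi}(1 + \alpha')(\tau - t)$; the case $\tau < t$ is symmetric, prepending an arbitrary control on $[\tau, t]$. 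Combining this with the equal-time estimate through the triangle inequality for $\rho_1$, I obtain a modulus $\widehat{\omega}_\alpha$ such that, for suitably transported controls $u \leftrightarrow u'$, one has $|J(\tau, y(\cdot), u'(\cdot)) - J(t, x(\cdot), u(\cdot))| \le \widehat{\omega}_\alpha\bigl(\rho_1((\tau, y(\cdot)), (t, x(\cdot)))\bigr)$.

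Finally, taking infima over controls delivers both inclusions. Given a sequence $\{(t^{(k)}, x^{(k)}(\cdot))\}$ in $G(\alpha)$ with $\rho_1$-limit $(t, x(\cdot))$: transporting a near-optimal control for $(t, x(\cdot))$ to the sequence points gives $\limsup_k \varphi_0(t^{(k)}, x^{(k)}(\cdot)) \le \varphi_0(t, x(\cdot))$, i.e.\ $\varphi_0 \in \Phi_-$; transporting near-optimal controls for the sequence points to $(t, x(\cdot))$ gives $\liminf_k \varphi_0(t^{(k)}, x^{(k)}(\cdot)) \ge \varphi_0(t, x(\cdot))$, i.e.\ $\varphi_0 \in \Phi_+$. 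The main obstacle throughout is precisely the pointwise delay evaluation $\|x(t-h) - y(t-h)\|$ in condition (iii), which $\rho_1$ does not control at a single time; as indicated, it is tamed by keeping it under the time integral, where the shift $\eta = \xi - h$ converts it into a $\rho_1$-controlled integral plus a Gronwall-absorbable remainder.
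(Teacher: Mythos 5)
Your proposal is correct, but it follows a more self-contained route than the paper. The equal-time core is the same: your Gronwall estimate (with the substitution $\eta = \xi - h$ taming the pointwise delay term of condition (iii)) is exactly what underlies the paper's Proposition A.1, except that the paper cites the a priori bound and the continuous-dependence estimate for motions from \cite{Plaksin_2020_JOTA,Plaksin_2021_SIAM} rather than deriving them, and then runs the same near-optimal-control comparison. The genuine difference is in how unequal times are handled. You absorb the time shift directly through the dynamic-programming structure: restricting or prepending controls, using the growth bound of condition (ii) of Assumption \ref{assumption_optimal_control} to estimate the displacement and the running cost over the shift interval, and then reducing to the equal-time estimate; this yields full local uniform continuity of $\varphi_0$ with respect to $\rho_1$ on each $G(\alpha)$, from which both inclusions follow at once. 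The paper instead splits the difference through the stopped path: it compares $(t^{(k)}, x^{(k)}(\cdot))$ with $(t^{(k)}, x_{t^{(k)}}(\cdot \wedge t^{(k)}))$ at equal times via Proposition A.1, and then uses that $(t^{(k)}, x_{t^{(k)}}(\cdot \wedge t^{(k)})) \to (t, x(\cdot))$ in $\rho_\infty$ together with the already-known $\rho_\infty$-continuity of the value functional, cited from \cite[Theorem 2]{Lukoyanov_2010_IMM_Eng_1}. Your approach buys independence from that external continuity result (in effect you reprove it along the way); the paper's approach buys brevity, with the stopped-path trick doing the work of your control-transport argument. One point worth making explicit in your write-up: the $\rho_1$-limit $(t, x(\cdot))$ of a sequence in $G(\alpha)$ again lies in $G(\alpha)$ (pass to an a.e.-convergent subsequence of the stopped paths and use continuity of $x(\cdot)$), so your uniform-continuity estimate is indeed applicable to the limiting pair.
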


        The proof is given in \ref{appendix_proofs_3}.

        Thus, we conclude that $\varphi_0$ is a minimax solution of the associated Cauchy problem \eqref{HJ}, \eqref{boundary_condition}.
        Therefore, and since Assumption \ref{assumption_optimal_control} implies that the Hamiltonian $H$ from \eqref{H} and the boundary functional $\sigma$ satisfy Assumptions \ref{assumption_H1_H2} and \ref{assumption_H3_sigma}, we obtain that $\varphi_0$ is a viscosity solution of this problem by Theorem \ref{theorem_main}.
        In addition, such a viscosity solution is unique due to Corollary \ref{corollary_uniqueness}.
        As a result, we get
        \begin{theorem} \label{theorem_value_is_viscosity_solution}
            Under Assumption \ref{assumption_optimal_control}, a functional $\varphi \colon G \to \mathbb{R}$ is the value functional of the optimal control problem \eqref{system}--\eqref{cost_functional} if and only if $\varphi$ is a viscosity solution of the Cauchy problem for the associated Bellman equation \eqref{HJ}, where $H$ is given by \eqref{H}, and the boundary condition \eqref{boundary_condition}.
        \end{theorem}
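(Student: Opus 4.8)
The plan is to recognize that this statement is not a fresh theorem but a synthesis of results already established (or cited) earlier in the paper, so the proof reduces to verifying the hypotheses that license their application and then combining them around the value functional $\varphi_0$ from \eqref{value}. I would organize the argument so that the ``only if'' direction is supplied by the minimax characterization of $\varphi_0$ plus Theorem \ref{theorem_main}, and the ``if'' direction by the uniqueness in Corollary \ref{corollary_uniqueness}.

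First I would check that, under Assumption \ref{assumption_optimal_control}, the Hamiltonian $H$ defined in \eqref{H} together with the boundary functional $\sigma$ satisfies Assumptions \ref{assumption_H1_H2} and \ref{assumption_H3_sigma}. Condition (i) of Assumption \ref{assumption_H1_H2} follows from continuity of $f$ and $\chi$ and compactness of $U$, since a minimum over $U$ of a jointly continuous family depends continuously on the parameters. For condition (ii), I would use the elementary bound $|\min_{u \in U} a(u) - \min_{u \in U} b(u)| \leq \max_{u \in U} |a(u) - b(u)|$ applied with $a(u) = \langle s, f(t,x(\cdot),u)\rangle + \chi(t,x(\cdot),u)$ and $b(u) = \langle r, f(t,x(\cdot),u)\rangle + \chi(t,x(\cdot),u)$, together with the growth estimate $\|f(t,x(\cdot),u)\| \leq c_{f,\chi}(1 + \|x(\cdot)\|_{[-h,t]})$ from condition (ii) of Assumption \ref{assumption_optimal_control}; this yields $|H(t,x(\cdot),s) - H(t,x(\cdot),r)| \leq c_{f,\chi}(1 + \|x(\cdot)\|_{[-h,t]})\|s - r\|$, so one may take $c_H = c_{f,\chi}$. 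For Assumption \ref{assumption_H3_sigma}(i), the same min-comparison bound with $a(u) = \langle s, f(t,x(\cdot),u)\rangle + \chi(t,x(\cdot),u)$, $b(u) = \langle s, f(t,y(\cdot),u)\rangle + \chi(t,y(\cdot),u)$, combined with condition (iii) of Assumption \ref{assumption_optimal_control}, gives on any compact $X \subset G(\alpha)$ the required estimate, since each pointwise term and the integral term on the right-hand side of (iii) is dominated by $(2 + T + h)\|x(\cdot) - y(\cdot)\|_{[-h,t]}$, so that $\lambda_H$ can be taken to be a multiple of $\lambda_{f,\chi}$. Finally, Assumption \ref{assumption_H3_sigma}(ii) is immediate from condition (iv) of Assumption \ref{assumption_optimal_control}.

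Next I would assemble the minimax statement for $\varphi_0$. By \cite[Theorem 2]{Lukoyanov_2010_IMM_Eng_1}, under Assumption \ref{assumption_optimal_control} the value functional $\varphi_0$ is continuous and satisfies both \eqref{upper_minimax} and \eqref{lower_minimax} with $H$ as in \eqref{H}; by its very definition \eqref{value} it satisfies the boundary condition \eqref{boundary_condition}, since at $t = T$ the cost functional \eqref{cost_functional} reduces to $\sigma(x(\cdot))$ for every admissible control, whence $\varphi_0(T, x(\cdot)) = \sigma(x(\cdot))$. Together with Proposition \ref{proposition_value_Phi}, which gives $\varphi_0 \in \Phi_+ \cap \Phi_-$, this shows that $\varphi_0$ is a minimax solution of the Cauchy problem \eqref{HJ}, \eqref{boundary_condition}. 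Applying Theorem \ref{theorem_main}, whose hypotheses were verified in the previous step, $\varphi_0$ is then a viscosity solution of the same problem. This establishes the ``only if'' direction: if $\varphi = \varphi_0$ is the value functional, then $\varphi$ is a viscosity solution.

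For the converse, suppose $\varphi$ is a viscosity solution of the Cauchy problem \eqref{HJ}, \eqref{boundary_condition}. Since Assumptions \ref{assumption_H1_H2} and \ref{assumption_H3_sigma} hold, Corollary \ref{corollary_uniqueness} guarantees at most one such solution; as $\varphi_0$ is one by the previous paragraph, I conclude $\varphi = \varphi_0$, so $\varphi$ is the value functional. I do not expect a genuine obstacle in this argument: the analytic heart of the matter has already been absorbed into Theorem \ref{theorem_main} and Corollary \ref{corollary_uniqueness}, and the only real computation is the routine verification that \eqref{H} inherits Assumptions \ref{assumption_H1_H2} and \ref{assumption_H3_sigma} from Assumption \ref{assumption_optimal_control}. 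The only mild care needed there is to track which norms and constants appear in the min-comparison estimates.
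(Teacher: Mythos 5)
Your proposal is correct and follows essentially the same route as the paper: establish that $\varphi_0$ is a minimax solution of the Cauchy problem (via \cite[Theorem 2]{Lukoyanov_2010_IMM_Eng_1}, the definition \eqref{value} for the boundary condition, and Proposition \ref{proposition_value_Phi}), check that Assumption \ref{assumption_optimal_control} implies Assumptions \ref{assumption_H1_H2} and \ref{assumption_H3_sigma} for $H$ from \eqref{H} and $\sigma$, then apply Theorem \ref{theorem_main} for the ``only if'' direction and Corollary \ref{corollary_uniqueness} for the ``if'' direction. The only difference is that you spell out the verification of the assumptions on $H$ (which the paper asserts without detail), and your computations there are sound.
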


        Finally, let us consider the following condition on the functional $\sigma$, which is stronger than condition (iv) of Assumption \ref{assumption_optimal_control}.
        \begin{assumption} \label{assumption_sigma_Lipschitz_continuous}
            For every $\alpha > 0$, there exists $\lambda_\sigma > 0$ such that
            \begin{equation*}
                |\sigma(x(\cdot)) - \sigma(y(\cdot))|
                \leq \lambda_\sigma \rho_1 \bigl( (T, x(\cdot)),(T, y(\cdot)) \bigr),
                \quad \forall (T, x(\cdot)), (T, y(\cdot)) \in G(\alpha).
            \end{equation*}
        \end{assumption}

        In this case, we have
        \begin{proposition} \label{proposition_existence_Lip}
            Let conditions {\rm(i)}--{\rm(iii)} of Assumption \ref{assumption_optimal_control} and Assumption \ref{assumption_sigma_Lipschitz_continuous} hold.
            Then, the inclusion $\varphi_0 \in \Phi_{\Lip}$ is fulfilled.
        \end{proposition}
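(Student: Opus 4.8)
The plan is to fix $\alpha > 0$ together with two initial data $(t, x(\cdot))$, $(t, y(\cdot)) \in G(\alpha)$ sharing the same initial time $t$, and to estimate the difference of the cost values $J(t, x(\cdot), u(\cdot))$ and $J(t, y(\cdot), u(\cdot))$ uniformly over all admissible controls $u(\cdot) \in \mathcal{U}(t)$ in terms of $\rho_1 ((t, x(\cdot)), (t, y(\cdot)))$ (see \eqref{rho_1}). Since $\varphi_0$ is defined in \eqref{value} as an infimum of $J$ over the \emph{same} control set $\mathcal{U}(t)$ for both initial data, the elementary bound $|\varphi_0(t, x(\cdot)) - \varphi_0(t, y(\cdot))| \leq \sup_{u(\cdot) \in \mathcal{U}(t)} |J(t, x(\cdot), u(\cdot)) - J(t, y(\cdot), u(\cdot))|$ will then immediately yield \eqref{Phi_Lip} and hence the inclusion $\varphi_0 \in \Phi_{\Lip}$.

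Denote by $z^x(\cdot) \triangleq z(\cdot; t, x(\cdot), u(\cdot))$ and $z^y(\cdot) \triangleq z(\cdot; t, y(\cdot), u(\cdot))$ the two system motions driven by the common control $u(\cdot)$. First, relying on condition (ii) of Assumption \ref{assumption_optimal_control} and Gronwall's inequality, I would derive an a priori bound showing that both motions remain in $G(\alpha')$ on $[t, T]$ for some $\alpha'$ depending only on $\alpha$, $T$, and $c_{f, \chi}$. This lets the local Lipschitz constant $\lambda_{f, \chi}$ from condition (iii) of Assumption \ref{assumption_optimal_control} associated with $\alpha'$ act along the whole trajectories. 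Setting $w(\tau) \triangleq \|z^x(\tau) - z^y(\tau)\|$, which equals $\|x(\tau) - y(\tau)\|$ on $[-h, t]$, I would pass to the integral form of the difference of the dynamic equations \eqref{system} and apply condition (iii).

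The delicate point, and the main obstacle, is the delay term $\|z^x(s - h) - z^y(s - h)\|$ appearing in condition (iii): pointwise it cannot be controlled by $\rho_1$, which only governs $\|x(t) - y(t)\|$ and the integral $\int_{-h}^{t} \|x(\xi) - y(\xi)\| \, \rd \xi$. However, after the substitution $r = s - h$ one gets $\int_t^\tau w(s - h) \, \rd s = \int_{t - h}^{\tau - h} w(r) \, \rd r$, which splits into a part over $[t - h, t]$, equal to an integral of $\|x - y\|$ and thus bounded by $\rho_1 ((t, x(\cdot)), (t, y(\cdot)))$, and a part over $[t, \tau - h]$, bounded by $\int_t^\tau w(r) \, \rd r$. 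Combining this with the pointwise and integral terms of condition (iii) produces an estimate of the form $w(\tau) \leq C \rho_1 ((t, x(\cdot)), (t, y(\cdot))) + K \int_t^\tau w(s) \, \rd s$, and Gronwall's inequality then gives $\sup_{\tau \in [t, T]} w(\tau) \leq C_1 \rho_1 ((t, x(\cdot)), (t, y(\cdot)))$ and, consequently, $\int_{-h}^{T} w(\xi) \, \rd \xi \leq C_2 \rho_1 ((t, x(\cdot)), (t, y(\cdot)))$.

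It remains to estimate $|J(t, x(\cdot), u(\cdot)) - J(t, y(\cdot), u(\cdot))|$ via the cost functional \eqref{cost_functional}. For the terminal term, Assumption \ref{assumption_sigma_Lipschitz_continuous} gives $|\sigma(z^x(\cdot)) - \sigma(z^y(\cdot))| \leq \lambda_\sigma \rho_1 ((T, z^x(\cdot)), (T, z^y(\cdot)))$, and since $\rho_1 ((T, z^x(\cdot)), (T, z^y(\cdot))) = w(T) + \int_{-h}^{T} w(\xi) \, \rd \xi$, the bounds just obtained control it by a multiple of $\rho_1 ((t, x(\cdot)), (t, y(\cdot)))$. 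For the running cost, condition (iii) of Assumption \ref{assumption_optimal_control} again introduces only the pointwise, delay, and integral terms in $w$; integrating over $[t, T]$ and using the same change of variable for the delay term, every contribution is dominated by a constant times $\rho_1 ((t, x(\cdot)), (t, y(\cdot)))$. This bound is uniform in $u(\cdot) \in \mathcal{U}(t)$, so passing to the infimum over $u(\cdot)$ as described above completes the proof.
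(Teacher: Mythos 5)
Your proposal is correct and takes essentially the same route as the paper: the paper reduces this proposition to the argument of Proposition A.1, whose two ingredients are precisely your a priori bound on motions and the Gronwall-type estimate controlling $\rho_1\bigl((T, z^x(\cdot)), (T, z^y(\cdot))\bigr)$ together with the running-cost difference by a constant multiple of $\rho_1\bigl((t, x(\cdot)), (t, y(\cdot))\bigr)$ uniformly in the control, followed by Assumption \ref{assumption_sigma_Lipschitz_continuous} for the terminal term. The only differences are cosmetic: the paper cites these stability estimates from earlier work (where they are likewise obtained via Gronwall's inequality and the delay-term shift you describe), and it concludes with near-optimal controls rather than your equivalent bound of the difference of infima by the supremum of the differences.
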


        The proof is given in \ref{appendix_proofs_3}.

        From Theorem \ref{theorem_value_is_viscosity_solution} and Proposition \ref{proposition_existence_Lip}, we derive
        \begin{corollary} \label{corollary_value_is_viscosity_solution_from_Lip}
            Let conditions {\rm(i)}--{\rm(iii)} of Assumption \ref{assumption_optimal_control} and Assumption \ref{assumption_sigma_Lipschitz_continuous} hold.
            Then, a functional $\varphi \colon G \to \mathbb{R}$ is the value functional of the optimal control problem \eqref{system}--\eqref{cost_functional} if and only if $\varphi \in \Phi_{\Lip}$ and $\varphi$ is a viscosity solution of the Cauchy problem for the associated Bellman equation \eqref{HJ}, where $H$ is given by \eqref{H}, and the boundary condition \eqref{boundary_condition}.
        \end{corollary}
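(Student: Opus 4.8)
The plan is to obtain the corollary as a direct combination of Theorem \ref{theorem_value_is_viscosity_solution} and Proposition \ref{proposition_existence_Lip}, the only non-automatic point being to check that the hypotheses of the corollary suffice to invoke Theorem \ref{theorem_value_is_viscosity_solution}, whose statement is formulated under the full Assumption \ref{assumption_optimal_control}. Accordingly, the first step will be to verify that Assumption \ref{assumption_sigma_Lipschitz_continuous} implies condition (iv) of Assumption \ref{assumption_optimal_control}. This is immediate: given $\alpha > 0$ and $\varepsilon > 0$, letting $\lambda_\sigma > 0$ be the constant furnished by Assumption \ref{assumption_sigma_Lipschitz_continuous} for this $\alpha$ and setting $\delta_\sigma \triangleq \varepsilon / \lambda_\sigma$, one has $|\sigma(x(\cdot)) - \sigma(y(\cdot))| \leq \lambda_\sigma \rho_1((T, x(\cdot)), (T, y(\cdot))) \leq \varepsilon$ for all $(T, x(\cdot)), (T, y(\cdot)) \in G(\alpha)$ with $\rho_1((T, x(\cdot)), (T, y(\cdot))) \leq \delta_\sigma$. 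Hence, under conditions (i)--(iii) of Assumption \ref{assumption_optimal_control} together with Assumption \ref{assumption_sigma_Lipschitz_continuous}, the entire Assumption \ref{assumption_optimal_control} is in force, and Theorem \ref{theorem_value_is_viscosity_solution} becomes applicable.

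Next I would establish the direct implication. Assuming $\varphi$ is the value functional, i.e., $\varphi = \varphi_0$ as in \eqref{value}, Theorem \ref{theorem_value_is_viscosity_solution} guarantees that $\varphi$ is a viscosity solution of the Cauchy problem \eqref{HJ}, \eqref{boundary_condition} with $H$ from \eqref{H}. At the same time, Proposition \ref{proposition_existence_Lip}, whose hypotheses are exactly conditions (i)--(iii) of Assumption \ref{assumption_optimal_control} and Assumption \ref{assumption_sigma_Lipschitz_continuous}, i.e., those of the corollary, yields $\varphi = \varphi_0 \in \Phi_{\Lip}$. Thus both required properties hold simultaneously.

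For the converse, I would assume that $\varphi \in \Phi_{\Lip}$ and that $\varphi$ is a viscosity solution of the Cauchy problem. The ``if'' direction of Theorem \ref{theorem_value_is_viscosity_solution} then immediately gives that $\varphi$ coincides with the value functional of the optimal control problem \eqref{system}--\eqref{cost_functional}. Note that the hypothesis $\varphi \in \Phi_{\Lip}$ plays no role in this direction; it is listed in the corollary only to make the two characterizing conditions symmetric, and its consistency with the conclusion is ensured a posteriori by the forward implication, since the value functional does belong to $\Phi_{\Lip}$.

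I do not anticipate any substantive difficulty here, as the corollary is a bookkeeping consequence of results already in hand. The single step requiring attention is the reduction of condition (iv) of Assumption \ref{assumption_optimal_control} to Assumption \ref{assumption_sigma_Lipschitz_continuous}, which is settled by the elementary estimate above; once this is noted, both implications follow by citation. Consequently, the genuine work lies entirely in the proofs of Theorem \ref{theorem_value_is_viscosity_solution} and Proposition \ref{proposition_existence_Lip}, which are treated separately.
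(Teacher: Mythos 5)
Your proof is correct and follows essentially the same route as the paper, which derives the corollary directly from Theorem \ref{theorem_value_is_viscosity_solution} and Proposition \ref{proposition_existence_Lip}; your explicit check that Assumption \ref{assumption_sigma_Lipschitz_continuous} implies condition (iv) of Assumption \ref{assumption_optimal_control} (so that Theorem \ref{theorem_value_is_viscosity_solution} applies) is exactly the point the paper disposes of when it introduces Assumption \ref{assumption_sigma_Lipschitz_continuous} as ``stronger than condition (iv).'' Your remark that $\varphi \in \Phi_{\Lip}$ is not needed in the converse direction is also accurate.
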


        In particular, Theorem \ref{theorem_value_is_viscosity_solution} gives us a sufficient condition for existence of a viscosity solution of the Cauchy problem \eqref{HJ}, \eqref{boundary_condition}, while Corollary \ref{corollary_value_is_viscosity_solution_from_Lip} provides a sufficient condition for existence of a viscosity solution $\varphi \colon G \to \mathbb{R}$ of this problem satisfying the additional requirement $\varphi \in \Phi_{\Lip}$.

\section{Proof of Theorem \ref{theorem_main}}
\label{section_proof}

    This section is devoted to the proof of Theorem \ref{theorem_main}.
    Note that we prove only the first part of the theorem concerning upper solutions.
    The scheme of the proof is the following.
    In Section \ref{subsection_variational_principle}, we establish a variational principle, which is based on Borwein--Preiss variational principle from \cite[Theorem 1]{Li_Shi_2000}.
    In Section \ref{subsection_Clarke_Ledyaev}, we apply the variational principle to prove a certain property of the $ci$-subdifferential (see \eqref{subdifferential_via_directional}), which can be seen as an analog of Property of Subdifferential \cite[Theorem 1.1]{Subbotin_1993_MSb} (see also \cite[Theorem 3.1]{Clarke_Ledyaev_1994} and \cite[Section A6]{Subbotin_1995}).
    In Section \ref{subsection_proof}, we provide three lemmas that imply the first part of Theorem \ref{theorem_main}.
    The second part, which concerns lower solutions, can be proved similarly, while the third part is a direct consequence of the two previous ones.

    \subsection{Variational principle}
    \label{subsection_variational_principle}

        For every $\alpha > 0$, let us denote
        \begin{equation} \label{A_alpha}
            c_\alpha
            \triangleq T^2 + 8 \alpha^2 + 1
            > 0.
        \end{equation}

        The goal of this section is to prove
        \begin{lemma} \label{lemma_variational_principle_G}
            Let $\alpha > 0$, let $X \subset G$ be a non-empty closed set such that $X \subset G(\alpha) \cap G_0$, and let $\varphi \colon X \to \mathbb{R}$ be a functional that is lower semi-continuous and bounded from below.
            Then, for every $\varkappa \in (0, 1]$, there exist a functional $\psi \colon G \to \mathbb{R}$ and a point $(t^\ast, x^\ast(\cdot)) \in X$ such that the following statements hold:

            \smallskip

            \noindent {\rm(i)}
                The inequality below is valid:
                \begin{equation} \label{lemma_variational_principle_G_part_i}
                    |\psi(t, x(\cdot))|
                    \leq 2 c_\alpha \varkappa,
                    \quad \forall (t, x(\cdot)) \in X.
                \end{equation}

            \smallskip

            \noindent {\rm(ii)}
                The functional $\psi$ is $ci$-differentiable at the point $(t^\ast, x^\ast(\cdot))$, and the corresponding $ci$-derivatives satisfy the estimates
                \begin{equation} \label{lemma_variational_principle_G_part_ii}
                    |\partial_t \psi(t^\ast, x^\ast(\cdot))|
                    \leq 4 T \varkappa,
                    \quad \| \nabla \psi(t^\ast, x^\ast(\cdot)) \|
                    \leq 8 \alpha \varkappa.
                \end{equation}

            \smallskip

            \noindent {\rm(iii)}
                It holds that
                \begin{equation} \label{lemma_variational_principle_G_part_iii}
                    \varphi(t^\ast, x^\ast(\cdot)) + \psi(t^\ast, x^\ast(\cdot))
                    = \min_{(t, x(\cdot)) \in X} \bigl( \varphi(t, x(\cdot)) + \psi(t, x(\cdot)) \bigr).
                \end{equation}
        \end{lemma}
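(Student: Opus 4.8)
The plan is to apply the Borwein--Preiss variational principle from \cite[Theorem 1]{Li_Shi_2000} to the lower semi-continuous, bounded-below functional $\varphi$ on the complete metric space $(X, \rho)$ for a suitable complete metric $\rho$ on $X$, and then to transfer the gauge-type perturbation produced by that principle into a genuinely $ci$-differentiable functional $\psi$ on $G$ satisfying the three required estimates. The central difficulty, as emphasized in the introduction, is that the uniform norm on $C([-h,t],\mathbb{R}^n)$ is not smooth, so one cannot directly use a squared-norm gauge. The remedy is to build the gauge out of the smooth functional from \cite{Zhou_2020_1} that is equivalent (up to constants) to the square of the uniform norm and is itself $ci$-smooth; combining it with the elementary smooth pieces $|t-\tau|^2$ and $\|x(t)-y(\tau)\|^2$ yields a gauge-type functional whose perturbation is $ci$-differentiable.

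First I would fix the metric. The natural choice is a metric on $X$ built from $\rho_\infty$ (so that completeness and lower semi-continuity of $\varphi$ are available), but whose first-variable contribution is measured by the $ci$-smooth surrogate $\mathcal{S}(t,x(\cdot),\tau,y(\cdot))$ from \cite{Zhou_2020_1} comparable to $\|x(\cdot\wedge t)-y(\cdot\wedge\tau)\|_{[-h,T]}^2$. I would then define a gauge-type functional of the form
\begin{equation*}
    \Delta\bigl((t,x(\cdot)),(\tau,y(\cdot))\bigr)
    \triangleq |t-\tau|^2 + \|x(t)-y(\tau)\|^2 + \mathcal{S}\bigl((t,x(\cdot)),(\tau,y(\cdot))\bigr),
\end{equation*}
verify that it satisfies the hypotheses of the Borwein--Preiss principle (nonnegativity, vanishing exactly on the diagonal, and the required lower-semicontinuity/completeness properties inherited from $\mathcal{S}$), and apply the principle. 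This produces a point $(t^\ast,x^\ast(\cdot))\in X$ and a sequence of centers $\{(t_i,x_i(\cdot))\}$ together with nonnegative weights $\mu_i$ summing to $1$ such that the perturbed functional $\varphi(t,x(\cdot)) + \psi(t,x(\cdot))$, with
\begin{equation*}
    \psi(t,x(\cdot)) \triangleq \varkappa\sum_{i=0}^{\infty}\mu_i\,\Delta\bigl((t,x(\cdot)),(t_i,x_i(\cdot))\bigr),
\end{equation*}
attains its minimum over $X$ at $(t^\ast,x^\ast(\cdot))$, which gives statement (iii) directly.

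Next I would check statement (i): the principle controls the diameter of the region where the centers concentrate, so $\Delta$ evaluated at centers is $O(1)$ on $X\subset G(\alpha)$, and the bound $|\psi|\le 2c_\alpha\varkappa$ follows by estimating each term of $\Delta$ on $G(\alpha)$ — the $|t-\tau|^2$ term by $T^2$, the $\|x(t)-y(\tau)\|^2$ term by $4\alpha^2$, and the surrogate term by a comparable constant, which together explain the choice $c_\alpha = T^2 + 8\alpha^2 + 1$ in \eqref{A_alpha}. For statement (ii) I would use that each summand $\Delta(\cdot,(t_i,x_i(\cdot)))$ is $ci$-differentiable at $(t^\ast,x^\ast(\cdot))$ because each of its three constituents is: $|t-\tau|^2$ and $\|x(t)-y(\tau)\|^2$ are explicitly smooth in $(t,x(t))$, and $\mathcal{S}$ is $ci$-smooth by \cite{Zhou_2020_1}. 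Term-by-term $ci$-differentiation then gives $\partial_t\psi$ and $\nabla\psi$ as $\varkappa$-weighted averages of the corresponding derivatives; computing these at the minimizer and bounding them on $G(\alpha)$ yields $|\partial_t\psi(t^\ast,x^\ast(\cdot))|\le 4T\varkappa$ and $\|\nabla\psi(t^\ast,x^\ast(\cdot))\|\le 8\alpha\varkappa$.

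The main obstacle I expect is not the application of the variational principle itself but the verification that the infinite sum $\psi$ is $ci$-differentiable at $(t^\ast,x^\ast(\cdot))$ with the derivative obtained by interchanging $\partial_t/\nabla$ with $\sum_i\mu_i$. Since $ci$-differentiability is a one-sided limit along all Lipschitz extensions $z(\cdot)\in\Lip(t^\ast,x^\ast(\cdot))$, I would need uniform-in-$i$ control of the $o(\tau-t)$ remainders so that the series of remainders is itself $o(\tau-t)$; this requires uniform Lipschitz/modulus estimates on the $ci$-derivatives of the surrogate $\mathcal{S}$ over the bounded set $G(\alpha)$, which I would extract from the explicit construction in \cite{Zhou_2020_1}. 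Once this uniformity is in hand, the termwise differentiation and the resulting bounds in \eqref{lemma_variational_principle_G_part_ii} follow routinely.
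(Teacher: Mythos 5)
Your overall strategy---apply the Borwein--Preiss principle of \cite{Li_Shi_2000} with a gauge built from Zhou's $ci$-smooth functional, then differentiate the resulting series term by term---is the same as the paper's, but your gauge has a genuine defect that the paper's construction is specifically designed to avoid. You take
\begin{equation*}
    \Delta\bigl((t,x(\cdot)),(\tau,y(\cdot))\bigr)
    = |t-\tau|^2 + \|x(t)-y(\tau)\|^2 + \mathcal{S}\bigl((t,x(\cdot)),(\tau,y(\cdot))\bigr),
\end{equation*}
where $\mathcal{S}$ is the two-argument surrogate for $\|x(\cdot\wedge t)-y(\cdot\wedge\tau)\|_{[-h,T]}^2$, i.e., essentially the functional $\bar{V}$ of \eqref{widetilde_V}. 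Your claim that each summand $\Delta(\cdot,(t_i,x_i(\cdot)))$ is $ci$-differentiable at $(t^\ast,x^\ast(\cdot))$ ``because each of its three constituents is'' is false: $ci$-smoothness of the one-argument functional $V$ from \eqref{V} does \emph{not} transfer to the map $(t,x(\cdot))\mapsto \bar{V}((t,x(\cdot)),(t_i,x_i(\cdot)))$ at points with $t<t_i$. Example \ref{example} of the paper exhibits exactly this failure: there the candidate gradient computed along the rays $z^{[l]}(\cdot)$ comes out as $2(l-1)/l$, which depends on $l$, so no $ci$-derivative exists. The Borwein--Preiss principle gives no control on the time ordering between the minimizer $(t^\ast,x^\ast(\cdot))$ and the centers $(t_i,x_i(\cdot))$, so some $t_i$ may exceed $t^\ast$, and then $\psi$ can fail to be $ci$-differentiable at $(t^\ast,x^\ast(\cdot))$ altogether. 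The obstruction is therefore not, as you suggest, uniformity in $i$ of the $o(\tau-t)$ remainders (the paper handles that routinely with integral representations and dominated convergence), but the non-differentiability of a \emph{single} summand.

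The paper's fix, which your proposal is missing, is the asymmetric gauge \eqref{varrho_alpha}: $\mu_\alpha$ equals the smooth expression $(t-\tau)^2+V(t,x(\cdot)-y_t(\cdot\wedge\tau))$ only when $t\geq\tau$, and is set to the large constant $c_\alpha$ of \eqref{A_alpha} when $t<\tau$. This still satisfies the hypotheses of the Li--Shi theorem on $G(\alpha)$ (it remains lower semi-continuous there precisely because $c_\alpha$ dominates the smooth branch, see \eqref{varrho_bound}), and the payoff is that the Borwein--Preiss estimate $\mu_\alpha((t^\ast,x^\ast(\cdot)),(\tau^{(k)},y^{(k)}(\cdot)))\leq \varkappa/2^k\leq 1<c_\alpha$ forces $t^\ast\geq\tau^{(k)}$ for every center (inequalities \eqref{t_ast_geq_t_i}). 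Since $ci$-differentiation only probes times $\tau>t^\ast$, every summand in \eqref{psi_definition} is then evaluated on its smooth branch, and termwise differentiation goes through, yielding \eqref{psi_ci_derivatives_t}, \eqref{psi_ci_derivatives_nabla} and the bounds in \eqref{lemma_variational_principle_G_part_ii}. Without this device, or some substitute enforcing the time ordering between the minimizer and all centers, your proof does not close.
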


        We prove this lemma on the basis of the variational principle from \cite[Theorem 1]{Li_Shi_2000}.
        In order to apply this principle, we first need to construct a guage-type functional.

        \subsubsection{Guage-type functional}

        Following \cite{Zhou_2020_1}, let us consider the functional $V \colon G \to \mathbb{R}$ given by
        \begin{equation} \label{V}
            V(t, x(\cdot))
            \triangleq
            \begin{cases}
                \displaystyle
                \frac{\bigl( \|x(\cdot)\|_{[- h, t]}^2 - \|x(t)\|^2 \bigr)^2}{\|x(\cdot)\|_{[- h, t]}^2} + \|x(t)\|^2,
                & \mbox{if } \|x(\cdot)\|_{[- h, t]} > 0, \\
                0,
                & \mbox{if } \|x(\cdot)\|_{[- h, t]} = 0,
            \end{cases}
        \end{equation}
        for all $(t, x(\cdot)) \in G$.
        Recall that the notation $\|x(\cdot)\|_{[- h, t]}$ is introduced in \eqref{norm}.
        By \cite[Lemma 2.3]{Zhou_2020_1} (see also \cite[Section 4 and Appendix B]{Gomoyunov_Lukoyanov_Plaksin_2021}), the functional $V$ is $ci$-smooth and its $ci$-derivatives are as follows:
        \begin{equation} \label{V_ci_derivative_t}
            \partial_t V(t, x(\cdot))
            = 0
        \end{equation}
        and
        \begin{equation*}
            \nabla V (t, x(\cdot))
            = \begin{cases}
                \displaystyle
                \biggl( 2 - \frac{4 \bigl( \|x(\cdot)\|_{[- h, t]}^2 - \|x(t)\|^2 \bigr)}{\|x(\cdot)\|_{[- h, t]}^2} \biggr) x(t),
                & \mbox{if } \|x(\cdot)\|_{[- h, t]} > 0, \\
                0,
                & \mbox{if } \|x(\cdot)\|_{[- h, t]} = 0,
            \end{cases}
        \end{equation*}
        for all $(t, x(\cdot)) \in G_0$.
        Moreover, the estimates below hold:
        \begin{equation} \label{V_estimates}
            \frac{3 - \sqrt{5}}{2} \|x(\cdot)\|_{[- h, t]}^2
            \leq V(t, x(\cdot))
            \leq 2 \|x(\cdot)\|_{[- h, t]}^2,
            \quad \forall (t, x(\cdot)) \in G,
        \end{equation}
        and
        \begin{equation} \label{nabla_V_estimates}
            \|\nabla V (t, x(\cdot))\|
            \leq 2 \|x(t)\|,
            \quad \forall (t, x(\cdot)) \in G_0.
        \end{equation}

        For every $\alpha > 0$, let us define the functional $\mu_\alpha \colon G \times G \to \mathbb{R}$ by
        \begin{equation} \label{varrho_alpha}
            \mu_\alpha \bigl( (t, x(\cdot)), (\tau, y(\cdot)) \bigr)
            \triangleq
            \begin{cases}
                (t - \tau)^2 + V(t, x(\cdot) - y_t(\cdot \wedge \tau)), & \mbox{if } t \geq \tau, \\
                c_\alpha, & \mbox{if } t < \tau,
            \end{cases}
        \end{equation}
        for all $(t, x(\cdot))$, $(\tau, y(\cdot)) \in G$.
        Here, $c_\alpha$ is taken from \eqref{A_alpha}, and, in accordance with \eqref{x_t}, the function $y_t(\cdot \wedge \tau)$ is the restriction of the function $y(\cdot \wedge \tau)$ to the interval $[- h, t]$.

        The next lemma shows, in particular, that the functional $\mu_\alpha$ meets the requirements for a guage-type functional from \cite[Theorem 1]{Li_Shi_2000}.
        \begin{lemma} \label{lemma_guage-type_function}
            For every $\alpha > 0$, the functional $\mu_\alpha$ has the following properties:

            \smallskip

            \noindent {\rm(i)}
                The functional $\mu_\alpha$ is non-negative, and the estimate
                \begin{equation} \label{varrho.1}
                    \mu_\alpha \bigl( (t, x(\cdot)), (\tau, y(\cdot)) \bigr)
                    \leq T^2 + 2 (\|x(\cdot)\|_{[- h, t]} + \|y(\cdot)\|_{[- h, \tau]})^2
                \end{equation}
                is valid for all $(t, x(\cdot))$, $(\tau, y(\cdot)) \in G$ with $t \geq \tau$.

            \smallskip

            \noindent {\rm(ii)}
                For every $(t, x(\cdot)) \in G$, the equality $\mu_\alpha ((t, x(\cdot)), (t, x(\cdot))) = 0$ takes place.

            \smallskip

            \noindent {\rm(iii)}
                Let sequences $\{(t^{(k)}, x^{(k)}(\cdot))\}_{k = 1}^\infty$, $\{(\tau^{(k)}, y^{(k)}(\cdot))\}_{k = 1}^\infty \subset G$ be such that
                \begin{equation} \label{varrho.3_part_1}
                    \mu_\alpha \bigl( (t^{(k)}, x^{(k)}(\cdot)), (\tau^{(k)}, y^{(k)}(\cdot)) \bigr)
                    \to 0
                    \text{ as } k \to \infty.
                \end{equation}
                Then, it holds that
                \begin{equation} \label{varrho.3_part_2}
                    \rho_\infty \bigl( (t^{(k)}, x^{(k)}(\cdot)), (\tau^{(k)}, y^{(k)}(\cdot)) \bigr)
                    \to 0
                    \text{ as } k \to \infty.
                \end{equation}

            \smallskip

            \noindent {\rm(iv)}
                For every $(\tau, y(\cdot)) \in G(\alpha)$, the functional below is lower semi-continuous:
                \begin{equation*}
                    G(\alpha) \ni (t, x(\cdot)) \mapsto \mu_\alpha \bigl( (t, x(\cdot)), (\tau, y(\cdot)) \bigr) \in \mathbb{R}.
                \end{equation*}
        \end{lemma}
        \begin{proof}
            Properties (i) and (ii) follow directly from definition \eqref{varrho_alpha} of the functional $\mu_\alpha$, definition \eqref{V} of the functional $V$, and the second inequality in \eqref{V_estimates}.

            Let us prove property (iii).
            In view of \eqref{varrho.3_part_1}, we can assume that
            \begin{equation*}
                \mu_\alpha \bigl( (t^{(k)}, x^{(k)}(\cdot)), (\tau^{(k)}, y^{(k)}(\cdot)) \bigr)
                < c_\alpha,
                \quad \forall k \in \mathbb{N}.
            \end{equation*}
            Then, due to \eqref{varrho_alpha}, we find that $t^{(k)} \geq \tau^{(k)}$ and
            \begin{equation*}
                \mu_\alpha \bigl( (t^{(k)}, x^{(k)}(\cdot)), (\tau^{(k)}, y^{(k)}(\cdot)) \bigr)
                = (t^{(k)} - \tau^{(k)})^2 + V(t^{(k)}, x^{(k)}(\cdot) - y^{(k)}_{t^{(k)}}(\cdot \wedge \tau^{(k)}))
            \end{equation*}
            for all $k \in \mathbb{N}$.
            Hence, by \eqref{varrho.3_part_1}, we conclude that, as $k \to \infty$,
            \begin{equation} \label{proof_varrho.3_1}
                |t^{(k)} - \tau^{(k)}|
                \to 0,
                \quad V(t^{(k)}, x^{(k)}(\cdot) - y^{(k)}_{t^{(k)}}(\cdot \wedge \tau^{(k)}))
                \to 0.
            \end{equation}
            In addition, using the first inequality in \eqref{V_estimates}, we derive
            \begin{align}
                V(t^{(k)}, x^{(k)}(\cdot) - y^{(k)}_{t^{(k)}}(\cdot \wedge \tau^{(k)}))
                & \geq \frac{3 - \sqrt{5}}{2} \|x^{(k)}(\cdot) - y^{(k)}_{t^{(k)}}(\cdot \wedge \tau^{(k)})\|_{[- h, t^{(k)}]}^2
                \nonumber \\
                & = \frac{3 - \sqrt{5}}{2} \|x^{(k)}(\cdot \wedge t^{(k)}) - y^{(k)}(\cdot \wedge \tau^{(k)})\|_{[- h, T]}^2
                \label{proof_varrho.3_2}
            \end{align}
            for all $k \in \mathbb{N}$.
            From \eqref{proof_varrho.3_1} and \eqref{proof_varrho.3_2}, taking definition \eqref{rho_infty} of the metric $\rho_\infty$ into account, we derive \eqref{varrho.3_part_2}.

            Property (iv) can be verified directly.
            To this end, it suffices to fix $(\tau, y(\cdot)) \in G(\alpha)$ and observe that the functional
            \begin{equation*}
                G \ni (t, x(\cdot)) \mapsto (t - \tau)^2 + V(t, x(\cdot) - y_t(\cdot \wedge \tau)) \in \mathbb{R}
            \end{equation*}
            is continuous due to continuity of $V$ and that the inequality
            \begin{equation} \label{varrho_bound}
                \mu_\alpha \bigl( (t, x(\cdot)), (\tau, y(\cdot)) \bigr)
                \leq c_\alpha,
                \quad \forall (t, x(\cdot)) \in G(\alpha),
            \end{equation}
            holds owing to estimate \eqref{varrho.1}, definition \eqref{G_alpha} of $G(\alpha)$, and choice \eqref{A_alpha} of $c_\alpha$.
        \end{proof}

        \subsubsection{Proof of Lemma \ref{lemma_variational_principle_G}}

            Note that the metric space $(X, \rho_\infty)$ is complete since the metric space $(G, \rho_\infty)$ is complete and the set $X$ is closed.
            Then, due to the assumptions on the functional $\varphi$, we can apply \cite[Theorem 1]{Li_Shi_2000}, where we take the functional $\mu_\alpha$ from \eqref{varrho_alpha} (more precisely, its restriction to $X \times X$) as a guage-type functional in view of Lemma \ref{lemma_guage-type_function} and the inclusion $X \subset G(\alpha)$.
            As a result, we obtain that there exist a sequence $\{(\tau^{(k)}, y^{(k)}(\cdot))\}_{k = 0}^\infty \subset X$ and a point $(t^\ast, x^\ast(\cdot)) \in X$ such that
            \begin{equation} \label{varrho_alpha_bound_i}
                \mu_\alpha \bigl( (t^\ast, x^\ast(\cdot)), (\tau^{(k)}, y^{(k)}(\cdot)) \bigr)
                \leq \varkappa / 2^k,
                \quad \forall k \in \mathbb{N} \cup \{0\},
            \end{equation}
            and, for the functional $\psi \colon G \to \mathbb{R}$ given by
            \begin{equation} \label{psi_definition}
                \psi(t, x(\cdot))
                \triangleq \varkappa \sum_{k = 0}^{\infty} \frac{1}{2^k} \mu_\alpha \bigl( (t, x(\cdot)), (\tau^{(k)}, y^{(k)}(\cdot)) \bigr),
                \quad \forall (t, x(\cdot)) \in G,
            \end{equation}
            equality \eqref{lemma_variational_principle_G_part_iii} is valid.
            Observe that the functional $\psi$ is well-defined, i.e., the series in \eqref{psi_definition} converges for all $(t, x(\cdot)) \in G$.
            Indeed, for any $k \in \mathbb{N} \cup \{0\}$, by virtue of definition \eqref{varrho_alpha} of the functional $\mu_\alpha$ and inequality \eqref{varrho.1}, we have
            \begin{equation*}
                \mu_\alpha \bigl( (t, x(\cdot)), (\tau^{(k)}, y^{(k)}(\cdot)) \bigr)
                \leq \max \bigl\{ c_\alpha, T^2 + 2 (\|x(\cdot)\|_{[- h, t]} + \alpha)^2 \bigr\},
                \quad \forall (t, x(\cdot)) \in G.
            \end{equation*}
            In addition, for every $(t, x(\cdot)) \in X \subset G(\alpha)$, according to \eqref{varrho_bound}, we get
            \begin{equation*}
                0
                \leq \psi(t, x(\cdot))
                \leq c_\alpha \varkappa \sum_{k = 0}^{\infty} \frac{1}{2^k}
                = 2 c_\alpha \varkappa,
            \end{equation*}
            which proves \eqref{lemma_variational_principle_G_part_i}.

            It is important to note that, owing to estimate \eqref{varrho_alpha_bound_i}, the inequality $\varkappa \leq 1$, and definition \eqref{A_alpha} of $c_\alpha$, we have
            \begin{equation*}
                \mu_\alpha \bigl( (t^\ast, x^\ast(\cdot)), (\tau^{(k)}, y^{(k)}(\cdot)) \bigr)
                \leq 1
                < c_\alpha,
                \quad \forall k \in \mathbb{N} \cup \{0\},
            \end{equation*}
            and, therefore, in view of \eqref{varrho_alpha}, we obtain
            \begin{equation} \label{t_ast_geq_t_i}
                t^\ast
                \geq \tau^{(k)},
                \quad \forall k \in \mathbb{N} \cup \{0\}.
            \end{equation}

            Now, let us show that the functional $\psi$ is $ci$-differentiable at the point $(t^\ast, x^\ast(\cdot))$, belonging to $G_0$ since $X \subset G_0$, and that the corresponding $ci$-derivatives are given by
            \begin{equation} \label{psi_ci_derivatives_t}
                \partial_t \psi (t^\ast, x^\ast(\cdot))
                = \varkappa \sum_{k = 0}^{\infty} \frac{t^\ast - \tau^{(k)}}{2^{k - 1}}
            \end{equation}
            and
            \begin{equation} \label{psi_ci_derivatives_nabla}
                \nabla \psi (t^\ast, x^\ast(\cdot))
                = \varkappa \sum_{k = 0}^{\infty} \frac{1}{2^k} \nabla V(t^\ast, x^\ast(\cdot) - y^{(k)}_{t^\ast}(\cdot \wedge \tau^{(k)})).
            \end{equation}
            Let $z(\cdot) \in \Lip(t^\ast, x^\ast(\cdot))$.
            Let us choose $\lambda_z > 0$ such that $\|z(\tau) - z(\xi)\| \leq \lambda_z |\tau - \xi|$ for all $\tau$, $\xi \in [t^\ast, T]$.
            In particular, recalling that $(t^\ast, x^\ast(\cdot)) \in X \subset G(\alpha)$, we derive
            \begin{equation*}
                \|z(\tau)\|
                \leq \|x^\ast(t^\ast)\| + \lambda_z (T - t^\ast)
                \leq \alpha + \lambda_z (T - t^\ast),
                \quad \forall \tau \in [t^\ast, T],
            \end{equation*}
            and, hence, $\|z(\cdot)\|_{[- h, T]} \leq \alpha + \lambda_z (T - t^\ast)$.

            By \eqref{varrho_alpha} and \eqref{t_ast_geq_t_i}, we have
            \begin{equation} \label{psi_representation}
                \psi(\tau, z_\tau(\cdot))
                = \varkappa \sum_{k = 0}^{\infty} \frac{1}{2^k}
                \bigl( (\tau - \tau^{(k)})^2 + V(\tau, z_\tau(\cdot) - y^{(k)}_{\tau}(\cdot \wedge \tau^{(k)})) \bigr),
                \quad \forall \tau \in [t^\ast, T].
            \end{equation}

            Let us fix $k \in \mathbb{N} \cup \{0\}$ and consider the function
            \begin{equation*}
                z^{(k)}(\tau)
                \triangleq z(\tau) - y^{(k)}(\tau \wedge \tau^{(k)}),
                \quad \forall \tau \in [- h, T].
            \end{equation*}
            Since $z(\cdot) \in \Lip(t^\ast, x^\ast(\cdot))$ and $y^{(k)}(\tau \wedge \tau^{(k)}) = y^{(k)}(\tau^{(k)})$ for all $\tau \in [t^\ast, T]$  due to \eqref{t_ast_geq_t_i}, we obtain $z^{(k)}(\cdot) \in \Lip(t^\ast, x^\ast(\cdot) - y^{(k)}_{t^\ast} (\cdot \wedge \tau^{(k)}))$.
            Then, recalling that the functional $V$ is $ci$-\-smooth and equality \eqref{V_ci_derivative_t} holds, we get (see, e.g., \cite[Proposition 1]{Gomoyunov_Lukoyanov_Plaksin_2021})
            \begin{align}
                & V(\tau, z_\tau(\cdot) - y^{(k)}_\tau(\cdot \wedge \tau^{(k)})) - V(t^\ast, x^\ast(\cdot) - y^{(k)}_{t^\ast} (\cdot \wedge \tau^{(k)}))
                \nonumber \\
                & \quad = V(\tau, z^{(k)}_\tau(\cdot)) - V(t^\ast, x^\ast(\cdot) - y^{(k)}_{t^\ast} (\cdot \wedge \tau^{(k)}))
                \nonumber \\
                & \quad = \int_{t^\ast}^{\tau} \langle \nabla V(\xi, z^{(k)}_\xi(\cdot)), \dot{z}^{(k)}(\xi) \rangle \, \rd \xi
                \nonumber \\
                & \quad = \int_{t^\ast}^{\tau} \langle \nabla V(\xi, z_\xi(\cdot) - y^{(k)}_\xi (\cdot \wedge \tau^{(k)})), \dot{z}(\xi) \rangle \, \rd \xi,
                \quad \forall \tau \in [t^\ast, T).
                \label{V_integral}
            \end{align}

            From \eqref{psi_representation} and \eqref{V_integral}, it follows that
            \begin{multline*}
                \psi(\tau, z_\tau(\cdot)) - \psi(t^\ast, x^\ast(\cdot)) \\
                = \varkappa \sum_{k = 0}^{\infty} \frac{1}{2^k}
                \int_{t^\ast}^{\tau} \bigl( 2 (\xi - \tau^{(k)})
                + \langle \nabla V(\xi, z_\xi(\cdot) - y^{(k)}_\xi (\cdot \wedge \tau^{(k)})), \dot{z}(\xi) \rangle \bigr) \, \rd \xi
            \end{multline*}
            for all $\tau \in [t^\ast, T)$.
            Note that the functions
            \begin{equation} \label{auxiliary_functions}
                [t^\ast, T) \ni \xi
                \mapsto \nabla V(\xi, z_\xi(\cdot) - y^{(k)}_\xi (\cdot \wedge \tau^{(k)})) \in \mathbb{R}^n,
                \quad \forall k \in \mathbb{N} \cup \{0\},
            \end{equation}
            are continuous owing to $ci$-smoothness of $V$.
            In addition, for a.e. $\xi \in [t^\ast, T]$, using estimate \eqref{nabla_V_estimates}, the inclusions $(\tau^{(k)}, y^{(k)}(\cdot)) \in X \subset G(\alpha)$ for all $k \in \mathbb{N} \cup \{0\}$, and the choice of $\lambda_z$, we derive
            \begin{align}
                & |2 (\xi - \tau^{(k)})
                + \langle \nabla V(\xi, z_\xi(\cdot) - y^{(k)}_\xi (\cdot \wedge \tau^{(k)})), \dot{z}(\xi) \rangle|
                \nonumber \\
                & \quad \leq 2 T + 2 \lambda_z \|z(\xi) - y^{(k)}(\xi \wedge \tau^{(k)})\|
                \nonumber \\
                & \quad \leq 2 T + 2 \lambda_z (2 \alpha + \lambda_z (T - t^\ast)),
                \quad \forall k \in \mathbb{N} \cup \{0\}.
                \label{estimate_Lebesgue_basis}
            \end{align}
            Hence, applying the Lebesgue dominated convergence theorem, we get
            \begin{multline} \label{estimate_Lebesgue}
                \psi(\tau, z_\tau(\cdot)) - \psi(t^\ast, x^\ast(\cdot)) \\
                = \int_{t^\ast}^{\tau} \varkappa \sum_{k = 0}^{\infty} \frac{\xi - \tau^{(k)}}{2^{k - 1}} \, \rd \xi
                + \int_{t^\ast}^{\tau} \biggl\langle \varkappa \sum_{k = 0}^{\infty} \frac{1}{2^k} \nabla V(\xi, z_\xi(\cdot) - y^{(k)}_\xi (\cdot \wedge \tau^{(k)})),
                \dot{z}(\xi) \biggl\rangle \, \rd \xi
            \end{multline}
            for all $\tau \in [t^\ast, T)$.
            For the first integral in \eqref{estimate_Lebesgue}, in accordance with \eqref{psi_ci_derivatives_t}, we have
            \begin{equation} \label{estimate_Lebesgue_first}
                \frac{1}{\tau - t^\ast}
                \biggl| \int_{t^\ast}^{\tau} \varkappa \sum_{k = 0}^{\infty} \frac{\xi - \tau^{(k)}}{2^{k - 1}} \, \rd \xi
                - \partial_t \psi(t^\ast, x^\ast(\cdot)) (\tau - t^\ast) \biggr|
                \to 0
                \text{ as } \tau \downarrow t^\ast.
            \end{equation}
            Moreover, since functions \eqref{auxiliary_functions} are continuous and satisfy the estimates (see \eqref{estimate_Lebesgue_basis})
            \begin{equation*}
                \|\nabla V(\xi, z_\xi(\cdot) - y^{(k)}_\xi (\cdot \wedge \tau^{(k)}))\|
                \leq 2 (2 \alpha + \lambda_z (T - t^\ast)),
                \quad \forall \xi \in [t^\ast, T),
                \quad \forall k \in \mathbb{N} \cup \{0\},
            \end{equation*}
            we conclude that the function
            \begin{equation*}
                [t^\ast, T) \ni \xi
                \mapsto \varkappa \sum_{k = 0}^{\infty} \frac{1}{2^k} \nabla V(\xi, z_\xi(\cdot) - y^{(k)}_\xi (\cdot \wedge \tau^{(k)})) \in \mathbb{R}^n
            \end{equation*}
            is continuous.
            Consequently, for the second integral in \eqref{estimate_Lebesgue}, in view of \eqref{psi_ci_derivatives_nabla}, we obtain
            \begin{multline} \label{estimate_Lebesgue_second}
                \frac{1}{\tau - t^\ast} \biggl| \int_{t^\ast}^{\tau} \biggl\langle \varkappa
                \sum_{k = 0}^{\infty} \frac{1}{2^k} \nabla V(\xi, z_\xi(\cdot) - y^{(k)}_\xi (\cdot \wedge \tau^{(k)})),
                \dot{z}(\xi) \biggl\rangle \, \rd \xi \\
                - \langle \nabla \psi(t^\ast, x^\ast(\cdot)), z(\tau) - x^\ast(t^\ast) \rangle \biggr|
                \to 0
                \text{ as } \tau \downarrow t^\ast.
            \end{multline}
            Relations \eqref{estimate_Lebesgue}--\eqref{estimate_Lebesgue_second} imply (see \eqref{ci_derivatives}) that the functional $\psi$ is $ci$-differentiable at the point $(t^\ast, x^\ast(\cdot))$ and that the corresponding $ci$-derivatives are as in \eqref{psi_ci_derivatives_t} and \eqref{psi_ci_derivatives_nabla}.

            Finally, note that the first inequality in \eqref{lemma_variational_principle_G_part_ii} follows directly from \eqref{psi_ci_derivatives_t}.
            Furthermore, using \eqref{nabla_V_estimates} and recalling that $(t^\ast, x^\ast(\cdot)) \in X \subset G(\alpha)$ and $(\tau^{(k)}, y^{(k)}(\cdot)) \in X \subset G(\alpha)$ for all $k \in \mathbb{N} \cup \{0\}$, we get
            \begin{equation*}
                \|\nabla \psi (t^\ast, x^\ast(\cdot))\|
                \leq \varkappa \sum_{k = 0}^{\infty} \frac{1}{2^{k - 1}} \|x^\ast(t^\ast) - y^{(k)} (t^\ast \wedge \tau^{(k)})\|
                \leq 8 \alpha \varkappa,
            \end{equation*}
            which proves the second inequality in \eqref{lemma_variational_principle_G_part_ii}.
            The proof of Lemma \ref{lemma_variational_principle_G} is complete.

        \subsubsection{Comments}
        \label{subsubsection_remarks}

            In this section, we make some comments concerning Lemma \ref{lemma_variational_principle_G} and its proof.

            \begin{remark} \label{remarks_on_variational_principle}
            (i)
                Comparing Lemma \ref{lemma_variational_principle_G} with \cite[Lemma 2.3]{Zhou_2020_2} (see also, e.g., formula (4.5) in that paper), we note that, according to \eqref{lemma_variational_principle_G_part_iii}, the point $(t^\ast, x^\ast(\cdot))$ provides the minimum over the whole set $X$ and not only over its subset $\{(t, x(\cdot)) \in X \ | \ t \geq t^\ast\}$.
                This difference is important for the proof of Lemma \ref{lemma_CL} given below (see, e.g., relation \eqref{gamma_k_above}).

            \smallskip

            \noindent (ii)
                The fact that we define the functional $\mu_\alpha$ equal to $c_\alpha$ for $t < \tau$ (see \eqref{varrho_alpha}) is used to obtain inequalities \eqref{t_ast_geq_t_i}, which allow us to get $ci$-differentiability of the functional $\psi$ at the point $(t^\ast, x^\ast(\cdot))$ directly by $ci$-smoothness of the functional $V$.

            \smallskip

            \noindent (iii)
                Let us consider the functional $\bar{V} \colon G \times G \to \mathbb{R}$ given by (see \eqref{V})
                \begin{equation} \label{widetilde_V}
                    \bar{V} \bigl( (t, x(\cdot)), (\tau, y(\cdot)) \bigr)
                    \triangleq V(T, x(\cdot \wedge t) - y(\cdot \wedge \tau)),
                    \quad \forall (t, x(\cdot)), (\tau, y(\cdot)) \in G.
                \end{equation}
                In explicit form, we have
                \begin{multline*}
                    \bar{V} \bigl( (t, x(\cdot)), (\tau, y(\cdot)) \bigr) \\
                    = \frac{\bigl( \|x(\cdot \wedge t) - y(\cdot \wedge \tau)\|_{[- h, T]}^2
                    - \|x(t) - y(\tau)\|^2 \bigr)^2}{\|x(\cdot \wedge t) - y(\cdot \wedge \tau)\|_{[- h, T]}^2}
                    + \|x(t) - y(\tau)\|^2,
                \end{multline*}
                if $\|x(\cdot \wedge t) - y(\cdot \wedge \tau)\|_{[- h, T]} > 0$, and, otherwise, $\bar{V} ((t, x(\cdot)), (\tau, y(\cdot))) = 0$.
                Then, instead of the functional $\mu_\alpha$ from \eqref{varrho_alpha}, we could take the functional $\bar{\mu} \colon G \times G \to \mathbb{R}$ defined by
                \begin{equation*}
                    \bar{\mu} \bigl( (t, x(\cdot)), (\tau, y(\cdot)) \bigr)
                    \triangleq (t - \tau)^2 + \bar{V} \bigl( (t, x(\cdot)), (\tau, y(\cdot)) \bigr),
                    \quad \forall (t, x(\cdot)), (\tau, y(\cdot)) \in G.
                \end{equation*}
                We note that
                \begin{equation*}
                    \bar{V} \bigl( (t, x(\cdot)), (\tau, y(\cdot)) \bigr)
                    = V(t, x(\cdot) - y_t(\cdot \wedge \tau))
                \end{equation*}
                for all $(t, x(\cdot))$, $(\tau, y(\cdot)) \in G$ with $t \geq \tau$, and, thus, the functionals $\bar{\mu}$ and $\mu_\alpha$ differ only when $t < \tau$.
                However, if the functional $\bar{\mu}$ is taken, inequalities \eqref{t_ast_geq_t_i} may no longer hold, i.e., it may happen that $t^\ast < \tau^{(k)}$ for some $k \in \mathbb{N} \cup \{0\}$.
                In turn, this may lead to the fact that the functional $\psi$ defined by \eqref{psi_definition} where we now replace $\mu_\alpha$ with $\bar{\mu}$ is not $ci$-differentiable at the point $(t^\ast, x^\ast(\cdot))$.
                More precisely, let us emphasize that, if we fix $(\tau_\ast, y_\ast(\cdot)) \in G$, then the functional
                \begin{equation} \label{widetilde_V^ast}
                    G \ni (t, x(\cdot)) \mapsto \bar{V}_\ast (t, x(\cdot))
                    \triangleq \bar{V} \bigl( (t, x(\cdot)), (\tau_\ast, y_\ast(\cdot)) \bigr) \in \mathbb{R}
                \end{equation}
                may not be $ci$-differentiable at some point $(t_\ast, x_\ast(\cdot)) \in G_0$ with $t_\ast < \tau_\ast$.
                To illustrate this circumstance, we present a simple example below.
            \end{remark}

            \begin{example} \label{example}
                Let us suppose that $n = 1$ and $T = 1$ for simplicity.
                Let us take
                \begin{equation*}
                    \tau_\ast
                    \triangleq 1,
                    \quad y_\ast(\xi)
                    \triangleq \begin{cases}
                        0, & \mbox{if } \xi \in [- h, 0], \\
                        \xi, & \mbox{if } \xi \in (0, 1],
                      \end{cases}
                \end{equation*}
                and show that the corresponding functional $\bar{V}_\ast$ (see \eqref{widetilde_V^ast}) is not $ci$-differentiable at the point $(t_\ast, x_\ast(\cdot)) \in G_0$ with
                \begin{equation*}
                    t_\ast
                    \triangleq 0,
                    \quad x_\ast(\xi)
                    \triangleq 1,
                    \quad \forall \xi \in [- h, 0].
                \end{equation*}
                Arguing by contradiction, let us assume that there exist $\partial_t \bar{V}_\ast(0, x_\ast(\cdot))$, $\nabla \bar{V}_\ast (0, x_\ast(\cdot)) \in \mathbb{R}$ such that, for all $z(\cdot) \in \Lip(0, x_\ast(\cdot))$,
                \begin{equation} \label{assumption_ci_differentiability}
                    \lim_{\tau \downarrow 0} \frac{\bar{V}_\ast(\tau, z_\tau(\cdot)) - \bar{V}_\ast(0, x_\ast(\cdot))
                    - \partial_t \bar{V}_\ast(0, x_\ast(\cdot)) \tau - \nabla \bar{V}_\ast (0, x_\ast(\cdot)) (z(\tau) - 1)}{\tau}
                    = 0.
                \end{equation}
                First of all, let us note that $\bar{V}_\ast (0, x_\ast(\cdot)) = 1$.
                Further, in accordance with \eqref{z^l}, let us consider the function $z^{[0]}(\cdot) \in \Lip(0, x_\ast(\cdot))$ such that $z^{[0]}(\tau) \triangleq 1$ for all $\tau \in [0, 1]$.
                Then, we have $\bar{V}_\ast (\tau, z^{[0]}_\tau(\cdot)) = 1$ for all $\tau \in [0, 1]$, wherefrom, owing to \eqref{assumption_ci_differentiability}, we find that
                \begin{equation} \label{assumption_ci_differentiability_corollary}
                    \partial_t \bar{V}_\ast(0, x_\ast(\cdot))
                    = 0.
                \end{equation}
                Now, let us fix $l > 1$ and take the function $z^{[l]}(\cdot) \in \Lip(0, x_\ast(\cdot))$ such that $z^{[l]}(\tau) \triangleq 1 + \tau l$ for all $\tau \in [0, 1]$.
                For every $\tau \in [0, 1]$, since
                \begin{equation*}
                    \max_{\xi \in [- h, 1]} |z^{[l]}(\xi \wedge \tau) - y_\ast(\xi)|
                    = 1 + \tau (l - 1),
                    \quad |z^{[l]}(\tau) - y_\ast(1)| = \tau l,
                \end{equation*}
                we get
                \begin{equation*}
                    \bar{V}_\ast (\tau, z^{[l]}_\tau(\cdot))
                    = \frac{((1 + \tau (l - 1))^2 - \tau^2 l^2)^2}{(1 + \tau (l - 1))^2} + \tau^2 l^2.
                \end{equation*}
                Hence, it follows from \eqref{assumption_ci_differentiability} and \eqref{assumption_ci_differentiability_corollary} that
                \begin{equation*}
                    \lim_{\tau \downarrow 0} \frac{1}{\tau}
                    \biggl( \frac{((1 + \tau (l - 1))^2 - \tau^2 l^2)^2}{(1 + \tau (l - 1))^2} + \tau^2 l^2 - 1 \biggr)
                    = \nabla \bar{V}_\ast (0, x_\ast(\cdot)) l.
                \end{equation*}
                After calculating the limit, we obtain $\nabla \bar{V}_\ast (0, x_\ast(\cdot)) =2 (l - 1) / l$.
                However, by definition, the $ci$-derivative $\nabla \bar{V}_\ast (0, x_\ast(\cdot))$ cannot depend on the choice of $l$, and, thus, we come to a contradiction.
            \end{example}

    \subsection{Property of $ci$-subdifferential}
    \label{subsection_Clarke_Ledyaev}

        Let $\varphi \colon G \to \mathbb{R}$, let $(t, x(\cdot)) \in G_0$, and let $L \subset \mathbb{R}^n$ be a non-empty convex compact set.
        Let us introduce the value (in this regard, see also, e.g., \cite[Section 4]{Clarke_Ledyaev_1994})
        \begin{multline} \label{new_lower_directional_derivative_multivalued}
            \rd^-_0 \varphi(t, x(\cdot); L) \\
            \triangleq \lim_{\delta \downarrow 0} \inf
            \biggl\{ \frac{\varphi(\tau, \omega_\tau(\cdot)) - \varphi(t, x(\cdot))}{\tau - t} \biggm|
            \, \tau \in (t, t + \delta], \, \omega(\cdot) \in \Omega(t, x(\cdot), [L]^\delta) \biggr\},
        \end{multline}
        where the set $\Omega(t, x(\cdot), [L]^\delta)$ is given by \eqref{Omega}.

        \begin{remark}
            Along with the value $\rd^- \varphi(t, x(\cdot); L)$ from \eqref{derivatives_multi-valued}, the value $\rd^-_0 \varphi(t, x(\cdot); L)$ can also be interpreted as a lower right derivative of the functional $\varphi$ at the point $(t, x(\cdot))$ in the multi-valued direction $L$.
            Moreover, note that the following inequality holds:
            \begin{equation} \label{new_and_old_lower_directional_derivatives_multivalued}
                \rd^-_0 \varphi(t, x(\cdot); L)
                \leq \rd^- \varphi(t, x(\cdot); L).
            \end{equation}
            On the other hand, the distinguishing feature of the value $\rd^-_0 \varphi(t, x(\cdot); L)$ is that it is not expressed directly in terms of right derivatives along extensions (see \eqref{derivatives_along_extenstion}), which means that it does not quite agree with the notion of $ci$-differentiability (see \eqref{ci_derivatives}).
            Nevertheless, we need to introduce the value $\rd^-_0 \varphi(t, x(\cdot); L)$ as an intermediate auxiliary technical construction.
        \end{remark}

        For every $(t, x(\cdot)) \in G$ and every $\eta > 0$, let us denote
        \begin{equation} \label{neighbourhood}
            O_\eta(t, x(\cdot))
            \triangleq \bigl\{ (\tau, y(\cdot)) \in G \bigm|
            \rho_\infty \bigl( (t, x(\cdot)), (\tau, y(\cdot)) \bigr)
            \leq \eta \bigr\}.
        \end{equation}

        In this section, we prove
        \begin{lemma} \label{lemma_CL}
            Let $\varphi \in \Phi_+$, let $(t_\ast, x_\ast(\cdot)) \in G_0$, and let $L \subset \mathbb{R}^n$ be a non-empty convex compact set.
            Suppose that
            \begin{equation} \label{lemma_CL_inequality_for_derivatives}
                \rd^-_0 \varphi(t_\ast, x_\ast(\cdot); L)
                > 0.
            \end{equation}
            Then, for every $\eta > 0$, there exist $(t, x(\cdot)) \in O_\eta(t_\ast, x_\ast(\cdot))$ and $(p_0, p) \in D^-\varphi (t, x(\cdot))$ such that
            \begin{equation} \label{lemma_CL_main}
                p_0 + \langle l, p \rangle
                > 0,
                \quad \forall l \in L.
            \end{equation}
        \end{lemma}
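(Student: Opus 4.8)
The plan is to transpose the proof of the multidirectional mean-value inequality to the space $G$, replacing the compactness arguments available in finite dimensions by the variational principle of Lemma~\ref{lemma_variational_principle_G}. The guiding observation is that the physical time already plays the role of the ``progress parameter'': advancing from $(t_\ast,x_\ast(\cdot))$ with a velocity $l\in L$ amounts to moving in the direction $(1,l)$ of the $(t,x)$-space, so the desired conclusion \eqref{lemma_CL_main} is exactly the positivity of the pairing of a $ci$-subgradient with every such direction. First I would convert \eqref{lemma_CL_inequality_for_derivatives} into a uniform quantitative increase: fixing $\beta'$, $\beta$ with $0<\beta'<\beta<\rd^-_0\varphi(t_\ast,x_\ast(\cdot);L)$, definition \eqref{new_lower_directional_derivative_multivalued} furnishes $\delta_0\in(0,T-t_\ast)$ such that
\[
\varphi(\tau,\omega_\tau(\cdot))-\varphi(t_\ast,x_\ast(\cdot))\ge\beta\,(\tau-t_\ast)
\]
for all $\tau\in(t_\ast,t_\ast+\delta_0]$ and all $\omega(\cdot)\in\Omega(t_\ast,x_\ast(\cdot),[L]^{\delta_0})$ (see \eqref{Omega}). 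Since $L$ is compact, its $\delta_0$-neighbourhood is bounded, so I can fix a horizon $\theta\in(0,\delta_0]$ so small that the funnel swept out by these trajectories over $[t_\ast,t_\ast+\theta]$ lies inside $O_\eta(t_\ast,x_\ast(\cdot))\cap G_0$ and inside $G(\alpha)$ for a suitable $\alpha>0$; being a continuous image of a uniformly Lipschitz, uniformly bounded family of paths, this funnel is a compact subset $X$ of $(G,\rho_\infty)$.

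Next I would minimise the tilted functional $\Psi(\tau,y(\cdot))\triangleq\varphi(\tau,y(\cdot))-\beta'(\tau-t_\ast)$ over $X$. As $\varphi\in\Phi_+$ is $\rho_\infty$-lower semicontinuous and $X$ is compact, $\Psi$ is lower semicontinuous and bounded below on $X$, so Lemma~\ref{lemma_variational_principle_G} applies: for a parameter $\varkappa$ to be fixed it yields $(t^\ast,x^\ast(\cdot))\in X$ and a functional $\psi$ that is $ci$-differentiable at $(t^\ast,x^\ast(\cdot))$, with $|\partial_t\psi(t^\ast,x^\ast(\cdot))|\le 4T\varkappa$ and $\|\nabla\psi(t^\ast,x^\ast(\cdot))\|\le 8\alpha\varkappa$ by \eqref{lemma_variational_principle_G_part_ii}, with $|\psi|\le 2c_\alpha\varkappa$ on $X$ by \eqref{lemma_variational_principle_G_part_i}, and minimising $\Psi+\psi$ over $X$ by \eqref{lemma_variational_principle_G_part_iii}. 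Comparing the minimal value with $\Psi(t_\ast,x_\ast(\cdot))+\psi(t_\ast,x_\ast(\cdot))\le\varphi(t_\ast,x_\ast(\cdot))+2c_\alpha\varkappa$ and with the value $\ge\varphi(t_\ast,x_\ast(\cdot))+(\beta-\beta')\theta-2c_\alpha\varkappa$ forced on the far slice $\{\tau=t_\ast+\theta\}$ by the quantitative increase, I would fix $\varkappa$ with $4c_\alpha\varkappa<(\beta-\beta')\theta$, which rules out the far slice and gives $t^\ast<t_\ast+\theta$. It is precisely here that the minimum being over the whole of $X$, and not only over $\{\tau\ge t^\ast\}$, is exploited (cf.\ Remark~\ref{remarks_on_variational_principle}).

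Finally I would read off a $ci$-subgradient. Minimality of $\Psi+\psi$ over $X$ gives, along forward extensions that remain in $X$,
\[
\varphi(\tau,z_\tau(\cdot))-\varphi(t^\ast,x^\ast(\cdot))\ge\beta'(\tau-t^\ast)-\bigl(\psi(\tau,z_\tau(\cdot))-\psi(t^\ast,x^\ast(\cdot))\bigr),
\]
and, since $\psi$ is $ci$-differentiable at $(t^\ast,x^\ast(\cdot))$, dividing by $\tau-t^\ast$ and passing to the $\liminf$ singles out the candidate pair $(p_0,p)\triangleq\bigl(\beta'-\partial_t\psi(t^\ast,x^\ast(\cdot)),\,-\nabla\psi(t^\ast,x^\ast(\cdot))\bigr)$. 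For $l\in L$ the straight forward trajectory is admissible, and the estimates of \eqref{lemma_variational_principle_G_part_ii} yield $p_0+\langle l,p\rangle\ge\beta'-(4T+8\alpha\|l\|)\varkappa$, which is positive uniformly in $l\in L$ once $\varkappa$ is small, as $L$ is bounded; this is the inequality \eqref{lemma_CL_main}. Note that a $ci$-smooth term confining the minimiser and non-increasing along the directions $(1,l)$, $l\in L$, would contribute non-positively to $\partial_t(\cdot)+\langle\nabla(\cdot),l\rangle$, hence would enter $p_0+\langle l,p\rangle$ without spoiling this bound.

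The step I expect to be the crux is genuine membership in the $ci$-subdifferential: the candidate pair a priori satisfies the defining inequality \eqref{subdifferential} only along those $z(\cdot)$ whose forward trace stays in the funnel $X$, whereas $(p_0,p)\in D^-\varphi(t^\ast,x^\ast(\cdot))$ requires it along \emph{every} $z(\cdot)\in\Lip(t^\ast,x^\ast(\cdot))$. Overcoming this is exactly the heart of the mean-value-inequality mechanism: one must arrange the minimisation so that $(t^\ast,x^\ast(\cdot))$ is interior in the sense that all forward directions are admissible --- for instance by minimising over a $\rho_\infty$-tube about the funnel together with a $ci$-smooth penalty that confines the minimiser while remaining non-increasing along $(1,l)$, $l\in L$, so that its $ci$-derivatives join $(p_0,p)$ harmlessly for \eqref{lemma_CL_main} --- whereas the absence of local compactness of $G$ is absorbed entirely into Lemma~\ref{lemma_variational_principle_G}, whose $ci$-smooth perturbation, defined on all of $G$, is what turns $-\nabla(\cdot)$ into an honest $ci$-subgradient.
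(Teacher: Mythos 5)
Your reduction of hypothesis \eqref{lemma_CL_inequality_for_derivatives} to a quantitative increase, your far-slice exclusion, and your positivity estimate $p_0+\langle l,p\rangle\ge\beta'-(4T+8\alpha\|l\|)\varkappa$ all match the mechanism of the paper's proof. But the step you yourself flag as the crux is a genuine gap, and the repair you sketch does not work. Minimising over the compact funnel produces a point $(t^\ast,x^\ast(\cdot))$ at which the defining inequality of \eqref{subdifferential} is available only along extensions $z(\cdot)$ whose forward trace stays in the funnel; for an arbitrary $z(\cdot)\in\Lip(t^\ast,x^\ast(\cdot))$, whose velocity need not lie near $L$, minimality gives nothing, so the inclusion $(p_0,p)\in D^-\varphi(t^\ast,x^\ast(\cdot))$ is not established. (The fact that the perturbation $\psi$ of Lemma \ref{lemma_variational_principle_G} is defined on all of $G$ does not help: the minimality \eqref{lemma_variational_principle_G_part_iii} compares only points of the constraint set.) Your proposed fix --- a $\rho_\infty$-tube about the funnel plus a $ci$-smooth penalty confining the minimiser to the funnel --- founders on exactly the obstruction this paper is organised around: such a penalty would have to be a smooth function of the sup-norm distance to a set of paths, and functionals of this kind are not $ci$-differentiable; indeed, even the squared sup-distance to a \emph{single} fixed path, the functional \eqref{widetilde_V} with second argument frozen, fails $ci$-differentiability (Example \ref{example} and item (iii) of Remark \ref{remarks_on_variational_principle}). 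Conversely, minimising the tilted functional over the tube \emph{without} confinement destroys your far-slice exclusion, because the increase property \eqref{lemKL_cond} controls $\varphi$ only along funnel trajectories, not on tube elements of arbitrary modulus of continuity.

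The paper escapes this tension by doubling the variables instead of using a static penalty: it minimises $\varphi(t,x(\cdot))+k\Psi\bigl((t,x(\cdot)),(\tau,y(\cdot))\bigr)+k^4(t-\tau)^2-\varepsilon_\ast(\tau-t_\ast)+\psi_k(t,x(\cdot))$ jointly over $X\times Y$, where $X$ is the closed, non-compact tube \eqref{R}, $Y$ is the compact funnel \eqref{D}, and $\Psi$ from \eqref{Psi} is a $\rho_1$-type quadratic coupling that \emph{is} $ci$-smooth in each variable separately --- this coupling, optimised also in its second argument, is the correct implementation of your ``confining penalty''. Lemma \ref{lemma_variational_principle_G} absorbs the non-compactness of $X$; the coupling pulls the $X$-component of the minimiser to within $\delta/3$ of the funnel (see \eqref{k}), creating the interior room needed so that \emph{every} $z(\cdot)\in\Lip(t^{(k)},x^{(k)}(\cdot))$ remains in $X$ on a short interval \eqref{choice_delta_z}, which is what yields the genuine subdifferential inclusion; the far slice is excluded through the $Y$-component \eqref{overline_tau_leq}, where the hypothesis applies; and \eqref{lemma_CL_main} follows from minimality in the second variable together with the near-cancellation \eqref{last_part_2}. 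Note finally that the coupling forces only $\rho_1$-closeness of the two components, so transferring $\varphi$-values across it in \eqref{convergence_of_varphi} requires the full $\Phi_+$ hypothesis (lower semicontinuity with respect to $\rho_1$ on bounded sets); your argument invokes only $\rho_\infty$-lower semicontinuity, which is no longer sufficient once the tube enters the picture.
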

        \begin{proof}
            Since $(t_\ast, x_\ast(\cdot)) \in G_0$, we can assume that $O_\eta(t_\ast, x_\ast(\cdot)) \subset G_0$.
            Moreover, due to the inclusion $\varphi \in \Phi_+$, the functional $\varphi$ is lower semi-continuous at the point $(t_\ast, x_\ast(\cdot))$, and, hence, we can also assume that
            \begin{equation} \label{choice_eta}
                \varphi(t, x(\cdot))
                \geq \varphi(t_\ast, x_\ast(\cdot)) - 1,
                \quad \forall (t, x(\cdot)) \in O_\eta(t_\ast, x_\ast(\cdot)).
            \end{equation}
            Note that $O_\eta(t_\ast, x_\ast(\cdot)) \subset G(\alpha)$ with $\alpha \triangleq \|x_\ast(\cdot)\|_{[- h, t_\ast]} + \eta > 0$.
            Owing to \eqref{lemma_CL_inequality_for_derivatives}, and in accordance with \eqref{new_lower_directional_derivative_multivalued}, we can choose $\delta_\ast \in (0, T - t_\ast]$ and $\varepsilon_\ast > 0$ such that
            \begin{equation} \label{lemKL_cond}
                \varphi(\tau, \omega_\tau(\cdot)) - \varphi(t_\ast, x_\ast(\cdot))
                \geq 2 \varepsilon_\ast (\tau - t_\ast),
                \quad \forall \tau \in (t_\ast, t_\ast + \delta_\ast],
                \quad \forall \omega(\cdot) \in \Omega,
            \end{equation}
            where we denote $\Omega \triangleq \Omega(t_\ast, x_\ast(\cdot), L)$ (see~\eqref{Omega}).
            Let us take $\lambda_L > \max_{l \in L} \|l\|$ and fix $\delta > 0$ such that
            \begin{equation} \label{choice_delta}
                \delta
                \leq \delta_\ast,
                \quad \delta
                \leq \eta / (2 + \lambda_L).
            \end{equation}

            Let us consider the sets
            \begin{multline} \label{R}
                X
                \triangleq \bigl\{ (t, x(\cdot)) \in G \bigm|
                t \in [t_\ast, t_\ast + \delta], \,
                x_{t_\ast}(\cdot) = x_\ast(\cdot), \\
                \text{and } \forall \xi \in [t_\ast, t] \ \exists \omega(\cdot) \in \Omega \colon \|x(\xi) - \omega(\xi)\| \leq \delta \bigr\}
            \end{multline}
            and
            \begin{equation} \label{D}
                Y
                \triangleq \bigl\{ (\tau, y(\cdot)) \in G \bigm|
                \tau \in [t_\ast, t_\ast + \delta] \text{ and } \exists \omega(\cdot) \in \Omega \colon y(\cdot) = \omega_\tau(\cdot) \bigr\}.
            \end{equation}
            Note that $(t_\ast, x_\ast(\cdot)) \in Y \subset X$.
            Further, using compactness of the set $\Omega$ in $C([- h, T], \mathbb{R}^n)$, it can be shown that the set $X$ is closed and the set $Y$ is compact.
            In addition, by the definition of $\lambda_L$, we obtain (see \eqref{Omega})
            \begin{equation} \label{choice_lambda_L}
                \|\omega(\tau) - \omega(\xi)\|
                \leq \lambda_L |\tau - \xi|,
                \quad \forall \tau, \xi \in [t_\ast, T],
                \quad \forall \omega(\cdot) \in \Omega,
            \end{equation}
            wherefrom, and in view of the choice of $\delta$ (see the second inequality in \eqref{choice_delta}), we derive the inclusion $X \subset O_\eta(t_\ast, x_\ast(\cdot))$.
            In particular, we get $X \subset G(\alpha) \cap G_0$.

            Let us introduce the functional $\Psi \colon G \times G \to \mathbb{R}$ defined by
            \begin{equation} \label{Psi}
                \Psi\bigl( (t, x(\cdot)), (\tau, y(\cdot)) \bigr)
                \triangleq \|x(t) - y(\tau)\|^2 + \int_{- h}^{T} \|x(\xi \wedge t) - y(\xi \wedge \tau)\|^2 \, \rd \xi
            \end{equation}
            for all $(t, x(\cdot))$, $(\tau, y(\cdot)) \in G$.
            Note that (see also, e.g., \cite[formula (5.13)]{Lukoyanov_2007_IMM_Eng}), for every fixed $(\tau_\ast, y_\ast(\cdot)) \in G$, the functional
            \begin{equation*}
                G \ni (t, x(\cdot)) \mapsto \Psi_\ast (t, x(\cdot)) \triangleq \Psi\bigl( (t, x(\cdot)), (\tau_\ast, y_\ast(\cdot)) \bigr) \in \mathbb{R}
            \end{equation*}
            is $ci$-smooth, and its $ci$-derivatives are as follows:
            \begin{equation} \label{Psi_ast_ci_derivatives}
                \partial_t \Psi_\ast (t, x(\cdot))
                = 0,
                \quad \nabla \Psi_\ast (t, x(\cdot))
                = 2 (x(t) - y_\ast(\tau_\ast))
                + 2 \int_{t}^{T} (x(t) - y_\ast(\xi \wedge \tau_\ast)) \, \rd \xi
            \end{equation}
            for all $(t, x(\cdot)) \in G_0$.
            Moreover, since
            \begin{equation} \label{Psi_ast_symmetry}
                \Psi\bigl( (t, x(\cdot)), (\tau, y(\cdot)) \bigr)
                = \Psi\bigl( (\tau, y(\cdot)), (t, x(\cdot)) \bigr),
                \quad \forall (t, x(\cdot)), (\tau, y(\cdot)) \in G,
            \end{equation}
            similar properties of the functional $\Psi$ are valid when the first argument is fixed.

            For every $k \in \mathbb{N}$, let us consider the functional $\zeta_k \colon X \times Y \to \mathbb{R}$ given by
            \begin{equation*}
                \zeta_k \bigl( (t, x(\cdot)), (\tau, y(\cdot)) \bigr)
                \triangleq \varphi(t, x(\cdot)) + k \Psi \bigl( (t, x(\cdot)), (\tau, y(\cdot)) \bigr) + k^4 (t - \tau)^2 - \varepsilon_\ast (\tau - t_\ast)
            \end{equation*}
            for all $(t, x(\cdot)) \in X$ and all $(\tau, y(\cdot)) \in Y$.
            Taking into account that the functional $\Psi$ is continuous and the set $Y$ is compact, let us define the functional $\varphi_k \colon X \to \mathbb{R}$ by
            \begin{multline} \label{f_k_definition}
                \varphi_k(t, x(\cdot))
                \triangleq \min_{(\tau, y(\cdot)) \in Y} \zeta_k \bigl( (t, x(\cdot)), (\tau, y(\cdot)) \bigr) \\
                = \varphi(t, x(\cdot)) + \min_{(\tau, y(\cdot)) \in Y}
                \bigl( k \Psi \bigl( (t, x(\cdot)), (\tau, y(\cdot)) \bigr) + k^4 (t - \tau)^2 - \varepsilon_\ast (\tau - t_\ast) \bigr)
            \end{multline}
            for all $(t, x(\cdot)) \in X$.
            Note that the functional $\varphi_k$ is lower semi-continuous and bounded from below.
            Indeed, $\varphi$ is lower semi-continuous on $G$ by the inclusion $\varphi \in \Phi_+$ and is bounded below on $X$ owing to the inclusion $X \subset O_\eta(t_\ast, x_\ast(\cdot))$ and inequality \eqref{choice_eta}, while the functional
            \begin{equation} \label{functional_auxiliary}
                X \ni (t, x(\cdot))
                \mapsto \min_{(\tau, y(\cdot)) \in Y}
                \bigl( k \Psi \bigl( (t, x(\cdot)), (\tau, y(\cdot)) \bigr) + k^4 (t - \tau)^2 - \varepsilon_\ast (\tau - t_\ast) \bigr)
                \in \mathbb{R}
            \end{equation}
            is continuous (see, e.g., \cite[Chapter 3, Section 1, Proposition 23]{Aubin_Ekeland_1984}) and, in view of non-negativeness of $\Psi$, is bounded from below.

            For every $k \in \mathbb{N}$, let us apply Lemma \ref{lemma_variational_principle_G} for the functional $\varphi_k \colon X \to \mathbb{R}$ and the number $\varkappa_k \triangleq 1 / (2 k)$.
            As a result, we obtain that, for every $k \in \mathbb{N}$, there exist a point $(t^{(k)}, x^{(k)}(\cdot)) \in X$ and a functional $\psi_k \colon G \to \mathbb{R}$ such that the following statements hold:

            \smallskip

            \noindent (i)
                The estimate below is fulfilled:
                \begin{equation} \label{alpha_psi}
                    |\psi_k(t, x(\cdot))|
                    \leq c_\alpha / k,
                    \quad \forall (t, x(\cdot)) \in X, \quad \forall k \in \mathbb{N}.
                \end{equation}

            \smallskip

            \noindent (ii)
                For every $k \in \mathbb{N}$, the functional $\psi_k$ is $ci$-differentiable at the point $(t^{(k)}, x^{(k)}(\cdot))$, and the following limit relations are valid as $k \to \infty$:
                \begin{equation} \label{ci_derivatives_psi_k}
                    | \partial_t \psi_k(t^{(k)}, x^{(k)}(\cdot)) |
                    \to 0,
                    \quad \| \nabla \psi_k(t^{(k)}, x^{(k)}(\cdot)) \|
                    \to 0.
                \end{equation}

            \smallskip

            \noindent (iii)
                It holds that
                \begin{equation} \label{choice_t^k_x^k}
                    \varphi_k (t^{(k)}, x^{(k)}(\cdot)) + \psi_k (t^{(k)}, x^{(k)}(\cdot))
                    = \min_{(t, x(\cdot)) \in X} \bigl( \varphi_k (t, x(\cdot)) + \psi_k (t, x(\cdot)) \bigr),
                    \quad \forall k \in \mathbb{N}.
                \end{equation}

            \smallskip

            Now, for every $k \in \mathbb{N}$, let us consider the functional $\gamma_k \colon X \times Y \to \mathbb{R}$ given by
            \begin{multline*}
                \gamma_k \bigl( (t, x(\cdot)), (\tau, y(\cdot)) \bigr)
                \triangleq \zeta_k \bigl( (t, x(\cdot)), (\tau, y(\cdot)) \bigr) + \psi_k(t, x(\cdot)) \\
                = \varphi(t, x(\cdot)) + k \Psi \bigl( (t, x(\cdot)), (\tau, y(\cdot)) \bigr) + k^4 (t - \tau)^2 - \varepsilon_\ast (\tau - t_\ast)
                + \psi_k(t, x(\cdot))
            \end{multline*}
            for all $(t, x(\cdot)) \in X$ and all $(\tau, y(\cdot)) \in Y$.
            According to definition \eqref{f_k_definition} of the functional $\varphi_k$, let us choose $(\tau^{(k)}, y^{(k)}(\cdot)) \in Y$ from the condition
            \begin{equation*}
                \zeta_k \bigl( (t^{(k)}, x^{(k)}(\cdot)), (\tau^{(k)}, y^{(k)}(\cdot)) \bigr)
                = \varphi_k (t^{(k)}, x^{(k)}(\cdot)).
            \end{equation*}
            Then, and due to \eqref{choice_t^k_x^k}, we get
            \begin{equation}\label{min}
                \gamma_k \bigl( (t^{(k)}, x^{(k)}(\cdot)), (\tau^{(k)}, y^{(k)}(\cdot)) \bigr)
                = \min_{(t, x(\cdot)) \in X, \, (\tau, y(\cdot)) \in Y}
                \gamma_k \bigl( (t, x(\cdot)), (\tau, y(\cdot)) \bigr).
            \end{equation}

            From \eqref{min}, and using the equality $\Psi((t_\ast, x_\ast(\cdot)), (t_\ast, x_\ast(\cdot))) = 0$, we derive
            \begin{align}
                \gamma_k \bigl( (t^{(k)}, x^{(k)}(\cdot)), (\tau^{(k)}, y^{(k)}(\cdot)) \bigr)
                & \leq \gamma_k \bigl( (t_\ast, x_\ast(\cdot)), (t_\ast, x_\ast(\cdot)) \bigr) \nonumber \\
                & = \varphi(t_\ast, x_\ast(\cdot)) + \psi_k(t_\ast, x_\ast(\cdot)),
                \quad \forall k \in \mathbb{N}.
                \label{gamma_k_above}
            \end{align}
            Thus, due to inequalities \eqref{choice_eta} and \eqref{alpha_psi} and non-negativeness of $\Psi$, we obtain
            \begin{equation} \label{beta}
                \Psi \bigl( (t^{(k)}, x^{(k)}(\cdot)), (\tau^{(k)}, y^{(k)}(\cdot)) \bigr)
                \leq \beta / k,
                \quad (t^{(k)} - \tau^{(k)})^2
                \leq \beta / k^4,
                \quad \forall k \in \mathbb{N},
            \end{equation}
            with $\beta \triangleq 1 + 2 c_\alpha + \varepsilon_\ast \delta > 0$.
            In particular, we get $\Psi((t^{(k)}, x^{(k)}(\cdot)), (\tau^{(k)}, y^{(k)}(\cdot))) \to 0$ and $|t^{(k)} - \tau^{(k)}| \to 0$ as $k \to \infty$.
            Hence, and since
            \begin{multline*}
                \rho_1 \bigl( (t^{(k)}, x^{(k)}(\cdot)), (\tau^{(k)}, y^{(k)}(\cdot)) \bigr) \\
                \leq |t^{(k)} - \tau^{(k)}|
                + (1 + \sqrt{T + h}) \sqrt{\Psi \bigl( (t^{(k)}, x^{(k)}(\cdot)), (\tau^{(k)}, y^{(k)}(\cdot)) \bigr)},
                \quad \forall k \in \mathbb{N},
            \end{multline*}
            in view of definitions \eqref{rho_1} and \eqref{Psi} of $\rho_1$ and $\Psi$, we find that
            \begin{equation} \label{lim0}
                \rho_1 \bigl( (t^{(k)}, x^{(k)}(\cdot)), (\tau^{(k)}, y^{(k)}(\cdot)) \bigr)
                \to 0
                \text{ as } k \to \infty.
            \end{equation}
            Further, by compactness of $Y$, we can assume that there exists $(\bar{\tau}, \bar{y}(\cdot)) \in Y$ such that
            \begin{equation} \label{lim1}
                \rho_\infty \bigl( (\tau^{(k)}, y^{(k)}(\cdot)), (\bar{\tau}, \bar{y}(\cdot)) \bigr)
                \to 0
                \text{ as } k \to \infty.
            \end{equation}
            Then, we have
            \begin{equation} \label{lim2}
                \tau^{(k)}
                \to \bar{\tau},
                \quad t^{(k)}
                \to \bar{\tau}
            \end{equation}
            as $k \to \infty$, and, owing to relationship \eqref{rho_1_and_rho_infty} between the metrics $\rho_1$ and $\rho_\infty$, we derive from \eqref{lim0} and \eqref{lim1} that
            \begin{equation} \label{lim2.5}
                \rho_1 \bigl( (t^{(k)}, x^{(k)}(\cdot)), (\bar{\tau}, \bar{y}(\cdot)) \bigr)
                \to 0
                \text{ as } k \to \infty.
            \end{equation}

            Let us show that
            \begin{equation} \label{overline_tau_leq}
                \bar{\tau}
                < t_\ast + \delta.
            \end{equation}
            Note that $\bar{\tau} \leq t_\ast + \delta$ according to definition \eqref{D} of the set $Y$.
            Consequently, arguing by contradiction, we can assume that $\bar{\tau} = t_\ast + \delta$.
            Due to \eqref{gamma_k_above}, and recalling that $\Psi$ is non-negative, we obtain
            \begin{equation} \label{contr}
                \varphi(t_\ast, x_\ast(\cdot)) + \psi_k(t_\ast, x_\ast(\cdot))
                \geq \varphi(t^{(k)}, x^{(k)}(\cdot)) - \varepsilon_\ast (\tau^{(k)} - t_\ast) + \psi_k(t^{(k)}, x^{(k)}(\cdot))
            \end{equation}
            for all $k \in \mathbb{N}$.
            Since $\varphi \in \Phi_+$ and $\{(t^{(k)}, x^{(k)}(\cdot))\}_{k = 1}^\infty \subset X \subset G(\alpha)$, relation \eqref{lim2.5} yields
            \begin{equation} \label{convergence_of_varphi}
                \liminf_{k \to \infty}
                \varphi(t^{(k)}, x^{(k)}(\cdot))
                \geq \varphi(\bar{\tau}, \bar{y}(\cdot)).
            \end{equation}
            Therefore, by \eqref{contr}, and taking \eqref{alpha_psi} and \eqref{lim2} into account, we get
            \begin{equation*}
                \varphi(t_\ast, x_\ast(\cdot))
                \geq \varphi(\bar{\tau}, \bar{y}(\cdot)) - \varepsilon_\ast (\bar{\tau} - t_\ast)
                = \varphi(t_\ast + \delta, \bar{y}(\cdot)) - \varepsilon_\ast \delta.
            \end{equation*}
            From the inclusion $(t_\ast + \delta, \bar{y}(\cdot)) \in Y$, it follows that there exists a function $\bar{\omega}(\cdot) \in \Omega$ such that $\bar{y}(\cdot) = \bar{\omega}_{t_\ast + \delta}(\cdot)$.
            Thus, we come to the inequality
            \begin{equation*}
                \varphi(t_\ast, x_\ast(\cdot))
                \geq \varphi(t_\ast + \delta, \bar{\omega}_{t_\ast + \delta}(\cdot)) - \varepsilon_\ast \delta,
            \end{equation*}
            which contradicts \eqref{lemKL_cond} in view of the choice of $\delta$ (see the first inequality in \eqref{choice_delta}).

            Let us fix $k \in \mathbb{N}$ such that (see \eqref{lim0}, \eqref{lim2}, and \eqref{overline_tau_leq})
            \begin{equation} \label{k}
                \begin{array}{c}
                    |t^{(k)} - \tau^{(k)}|
                    \leq \delta / (3 \lambda_L),
                    \quad \|x^{(k)}(t^{(k)}) - y^{(k)}(\tau^{(k)})\|
                    \leq \delta / 3, \\[0.5em]
                    t^{(k)} < t_\ast + \delta,
                    \quad \tau^{(k)} < t_\ast + \delta,
                \end{array}
            \end{equation}
            and (see \eqref{ci_derivatives_psi_k} and \eqref{beta})
            \begin{equation} \label{k_2}
                \begin{array}{c}
                    |\partial_t \psi_k (t^{(k)}, x^{(k)}(\cdot))|
                    \leq \varepsilon_\ast / 4,
                    \quad \|\nabla \psi_k (t^{(k)}, x^{(k)}(\cdot))\|
                    \leq \varepsilon_\ast / (4 \lambda_L), \\[0.5em]
                    |t^{(k)} - \tau^{(k)}|
                    \leq \varepsilon_\ast / (16 k \alpha \lambda_L).
                \end{array}
            \end{equation}
            Set
            \begin{equation} \label{p_0_p}
                p_0
                \triangleq - 2 k^4 (t^{(k)} - \tau^{(k)}) - \partial_t \psi_k (t^{(k)}, x^{(k)}(\cdot)),
                \quad p
                \triangleq - k \nabla_1 \Psi - \nabla \psi_k (t^{(k)}, x^{(k)}(\cdot)),
            \end{equation}
            where we denote (see \eqref{Psi_ast_ci_derivatives})
            \begin{equation} \label{nabla_1_Psi}
                \nabla_1 \Psi
                \triangleq 2 (x^{(k)}(t^{(k)}) - y^{(k)}(\tau^{(k)}))
                + 2 \int_{t^{(k)}}^{T} (x^{(k)}(t^{(k)}) - y^{(k)}(\xi \wedge \tau^{(k)})) \, \rd \xi.
            \end{equation}
            Let us show that the statement of the lemma is valid for the point $(t^{(k)}, x^{(k)}(\cdot))$ and the pair $(p_0, p)$.
            We first recall that $(t^{(k)}, x^{(k)}(\cdot)) \in X \subset O_\eta(t_\ast, x_\ast(\cdot))$.

            Let us prove the inclusion $(p_0, p) \in D^- \varphi(t^{(k)}, x^{(k)}(\cdot))$.
            To this end (see \eqref{subdifferential}), we need to take arbitrarily $z(\cdot) \in \Lip(t^{(k)}, x^{(k)}(\cdot))$ and show that
            \begin{equation} \label{proof_of_subdifferential}
                \liminf_{\tau \downarrow t^{(k)}} \frac{ \varphi(\tau, z_\tau(\cdot)) - \varphi(t^{(k)}, x^{(k)}(\cdot))
                - p_0 (\tau - t^{(k)}) - \langle p, z(\tau) - x^{(k)}(t^{(k)}) \rangle}{\tau - t^{(k)}}
                \geq 0.
            \end{equation}
            Let $\lambda_z > 0$ be such that
            \begin{equation} \label{choice_lambda_z}
                \|z(\tau) - z(\xi)\|
                \leq \lambda_z |\tau - \xi|,
                \quad \forall \tau, \xi \in [t^{(k)}, T].
            \end{equation}
            Let us choose $\delta_z > 0$ from the conditions (see also the third inequality in \eqref{k})
            \begin{equation} \label{choice_delta_z}
                t^{(k)} + \delta_z
                \leq t_\ast + \delta,
                \quad \delta_z
                \leq \delta / (3 (\lambda_z + \lambda_L)).
            \end{equation}
            Let us verify the inclusion $(t^{(k)} + \delta_z, z_{t^{(k)} + \delta_z}(\cdot)) \in X$.
            Since $z(\cdot) \in \Lip(t^{(k)}, x^{(k)}(\cdot))$ and $(t^{(k)}, x^{(k)}(\cdot)) \in X$, the equalities $z_{t_\ast}(\cdot) = x^{(k)}_{t_\ast}(\cdot) = x_\ast(\cdot)$ hold, and, for every $\xi \in [t_\ast, t^{(k)}]$, we can find $\omega_\ast(\cdot) \in \Omega$ such that
            \begin{equation*}
                \|z(\xi) - \omega_\ast(\xi)\|
                = \|x^{(k)}(\xi) - \omega_\ast(\xi)\|
                \leq \delta.
            \end{equation*}
            Further, by virtue of the inclusion $(\tau^{(k)}, y^{(k)}(\cdot)) \in Y$, there exists $\omega^\ast(\cdot) \in \Omega$ for which $y^{(k)}(\cdot) = \omega^\ast_{\tau^{(k)}}(\cdot)$.
            Then, using \eqref{choice_lambda_L} and \eqref{choice_lambda_z} as well as the first two inequalities in \eqref{k} and the second inequality in \eqref{choice_delta_z}, we derive
            \begin{align*}
                \|z(\xi) - \omega^\ast(\xi)\|
                & \leq \|z(\xi) - x^{(k)}(t^{(k)})\| + \|x^{(k)}(t^{(k)}) - y^{(k)}(\tau^{(k)})\| \\
                & \quad + \|\omega^\ast(\tau^{(k)}) - \omega^\ast(t^{(k)})\| + \|\omega^\ast(t^{(k)}) - \omega^\ast(\xi)\| \\
                & \leq \lambda_z (\xi - t^{(k)}) + \delta / 3 + \lambda_L |\tau^{(k)} - t^{(k)}| + \lambda_L |t^{(k)} - \xi| \\
                & \leq (\lambda_z + \lambda_L) \delta_z + 2 \delta / 3
                \leq \delta,
                \quad \forall \xi \in (t^{(k)}, t^{(k)} + \delta_z].
            \end{align*}
            Thus, we conclude that $(t^{(k)} + \delta_z, z_{t^{(k)} + \delta_z}(\cdot)) \in X$.
            As a consequence, we get $(\tau, z_\tau(\cdot)) \in X$ for all $\tau \in [t^{(k)}, t^{(k)} + \delta_z]$.
            Hence, according to \eqref{min}, we obtain
            \begin{align*}
                0
                & \leq \gamma_k \bigl( (\tau, z_\tau(\cdot)),(\tau^{(k)}, y^{(k)}(\cdot)) \bigr)
                - \gamma_k \bigl( (t^{(k)}, x^{(k)}(\cdot)), (\tau^{(k)}, y^{(k)}(\cdot)) \bigr) \\
                & = \varphi(\tau, z_\tau(\cdot)) - \varphi(t^{(k)}, x^{(k)}(\cdot)) \\
                & \quad + k \Psi \bigl( (\tau, z_\tau(\cdot)), (\tau^{(k)}, y^{(k)}(\cdot)) \bigr)
                - k \Psi \bigl( (t^{(k)}, x^{(k)}(\cdot)), (\tau^{(k)}, y^{(k)}(\cdot)) \bigr) \\
                & \quad + k^4 (\tau - \tau^{(k)})^2 - k^4 (t^{(k)} - \tau^{(k)})^2
                + \psi_k(\tau, z_\tau(\cdot)) - \psi_k(t^{(k)}, x^{(k)}(\cdot))
            \end{align*}
            for all $\tau \in [t^{(k)}, t^{(k)} + \delta_z]$.
            From these relations, noting that (see \eqref{Psi_ast_ci_derivatives} and \eqref{nabla_1_Psi})
            \begin{multline} \label{nabla_1_Psi_property}
                \lim_{\tau \downarrow t^{(k)}}
                \frac{1}{\tau - t^{(k)}} \Bigl( \Psi \bigl( (\tau, z_\tau(\cdot)), (\tau^{(k)}, y^{(k)}(\cdot)) \bigr)
                - \Psi \bigl( (t^{(k)}, x^{(k)}(\cdot)), (\tau^{(k)}, y^{(k)}(\cdot)) \bigr) \\
                - \langle \nabla_1 \Psi, z(\tau) - x^{(k)}(t^{(k)}) \rangle \Bigr)
                = 0,
            \end{multline}
            recalling that $\psi_k$ is $ci$-differentiable at the point $(t^{(k)}, x^{(k)}(\cdot))$, and taking the definitions of $p_0$ and $p$ into account (see \eqref{p_0_p}), we derive inequality \eqref{proof_of_subdifferential}.

            Finally, let us prove inequality \eqref{lemma_CL_main}.
            Let us fix $l \in L$ and, in accordance with \eqref{z^l}, consider the function $\omega^{[l]}(\cdot) \in \Lip(\tau^{(k)}, y^{(k)}(\cdot))$ such that $\omega^{[l]}(\xi) \triangleq y^{(k)}(\tau^{(k)}) + (\xi - \tau^{(k)}) l$ for all $\xi \in [\tau^{(k)}, T]$.
            Due to the inclusions $(\tau^{(k)}, y^{(k)}(\cdot)) \in Y$ and $l \in L$, we have $\omega^{[l]}(\cdot) \in \Omega$.
            Hence, if we choose $\delta_l > 0$ from the condition $\tau^{(k)} + \delta_l \leq t_\ast + \delta$ (see the last inequality in \eqref{k}), we get $(\tau, \omega^{[l]}_\tau(\cdot)) \in Y$ for all $\tau \in [\tau^{(k)}, \tau^{(k)} + \delta_l]$.
            Then, and by \eqref{min}, we get
            \begin{align}
                0
                & \leq \gamma_k \bigl( (t^{(k)}, x^{(k)}(\cdot)), (\tau, \omega^{[l]}_\tau(\cdot)) \bigr)
                - \gamma_k \bigl( (t^{(k)}, x^{(k)}(\cdot)), (\tau^{(k)}, y^{(k)}(\cdot)) \bigr) \nonumber \\
                & = k \Psi \bigl( (t^{(k)}, x^{(k)}(\cdot)), (\tau, \omega^{[l]}_\tau(\cdot)) \bigr)
                - k \Psi \bigl( (t^{(k)}, x^{(k)}(\cdot)), (\tau^{(k)}, y^{(k)}(\cdot)) \bigr) \nonumber \\
                & \quad + k^4 (t^{(k)} - \tau)^2 - k^4 (t^{(k)} - \tau^{(k)})^2 - \varepsilon_\ast (\tau - \tau^{(k)})
                \label{proof_second_part}
            \end{align}
            for all $\tau \in [\tau^{(k)}, \tau^{(k)} + \delta_l]$.
            Since (see \eqref{Psi_ast_ci_derivatives} and \eqref{Psi_ast_symmetry})
            \begin{equation} \label{nabla_2_Psi_property}
                \lim_{\tau \downarrow \tau^{(k)}}
                \frac{\Psi \bigl( (t^{(k)}, x^{(k)}(\cdot)), (\tau, \omega^{[l]}_\tau(\cdot)) \bigr)
                - \Psi \bigl( (t^{(k)}, x^{(k)}(\cdot)), (\tau^{(k)}, y^{(k)}(\cdot)) \bigr)}{\tau - \tau^{(k)}}
                = \langle \nabla_2 \Psi, l \rangle
            \end{equation}
            with
            \begin{equation} \label{nabla_2_Psi}
                \nabla_2 \Psi
                \triangleq 2 (y^{(k)}(\tau^{(k)}) - x^{(k)}(t^{(k)}))
                + 2 \int_{\tau^{(k)}}^{T} (y^{(k)}(\tau^{(k)}) - x^{(k)}(\xi \wedge t^{(k)})) \, \rd \xi,
            \end{equation}
            it follows from \eqref{proof_second_part} that
            \begin{equation*}
                \varepsilon_\ast
                \leq k \langle \nabla_2 \Psi, l \rangle - 2 k^4 (t^{(k)} - \tau^{(k)}).
            \end{equation*}
            Hence, in accordance with the definitions of $p_0$ and $p$ (see \eqref{p_0_p}), we obtain
            \begin{equation} \label{last_part_1}
                p_0 + \langle p, l \rangle
                \geq \varepsilon_\ast - \partial_t \psi_k(t^{(k)}, x^{(k)}(\cdot))
                - k \langle \nabla_2 \Psi + \nabla_1 \Psi, l \rangle
                - \langle \nabla \psi_k (t^{(k)}, x^{(k)}(\cdot)), l \rangle.
            \end{equation}
            Recalling that $(t^{(k)}, x^{(k)}(\cdot))$, $(\tau^{(k)}, y^{(k)}(\cdot)) \in X \subset G(\alpha)$ and taking \eqref{nabla_1_Psi} and \eqref{nabla_2_Psi} into account, we derive
            \begin{equation} \label{last_part_2}
                \|\nabla_2 \Psi + \nabla_1 \Psi\|
                \leq 4 \alpha |t^{(k)} - \tau^{(k)}|.
            \end{equation}
            By \eqref{last_part_1} and \eqref{last_part_2}, and owing to \eqref{k_2}, we have $p_0 + \langle p, l \rangle \geq \varepsilon_\ast / 4$, which yields \eqref{lemma_CL_main}.
            The lemma is proved.
        \end{proof}

        Below, we make some comments on Lemma \ref{lemma_CL} and its proof.

        \begin{remark}
            (i)
                The set $X$ from \eqref{R} is not compact with respect to the auxiliary metric $\rho_1$ and, therefore, in view of \eqref{rho_1_and_rho_infty}, with respect to the metric $\rho_\infty$ too.
                Indeed, let us fix $e \in \mathbb{R}^n$ with $\|e\| = 1$ and $\omega(\cdot) \in \Omega$ and, for every $k \in \mathbb{N}$, consider the function $\tilde{x}^{(k)}(\cdot) \in C([- h, t_\ast + \delta], \mathbb{R}^n)$ given by
                \begin{equation*}
                    \tilde{x}^{(k)}(\xi)
                    = \begin{cases}
                        \omega(\xi),
                        & \mbox{if } \xi \in [- h, t_\ast], \\
                        \omega(\xi) + k (\xi - t_\ast) e,
                        & \mbox{if } \xi \in (t_\ast, t_\ast + \delta / k], \\
                        \omega(\xi) + \delta e,
                        & \mbox{if } \xi \in (t_\ast + \delta / k, t_\ast + \delta].
                    \end{cases}
                \end{equation*}
                Then, we have $\{(t_\ast + \delta, \tilde{x}^{(k)}(\cdot))\}_{k = 1}^\infty \subset X$.
                However, the sequence $\{(t_\ast + \delta, \tilde{x}^{(k)}(\cdot))\}_{k = 1}^\infty$ does not have a subsequence converging to some point $(t_\ast + \delta, \tilde{x}(\cdot)) \in G$ with respect to $\rho_1$, since, roughly speaking, this sequence actually converges with respect to $\rho_1$ but to the point $(t_\ast + \delta, \tilde{x}^\ast(\cdot))$ with the discontinuous function $\tilde{x}^\ast \colon [- h, t_\ast + \delta] \to \mathbb{R}$ defined by
                \begin{equation*}
                    \tilde{x}^\ast(\xi)
                    = \begin{cases}
                        \omega(\xi),
                        & \mbox{if } \xi \in [- h, t_\ast], \\
                        \omega(\xi) + \delta e,
                        & \mbox{if } \xi \in (t_\ast, t_\ast + \delta].
                    \end{cases}
                \end{equation*}
                As a result, we conclude that the existence of a minimum of the functional $\varphi_k$ from \eqref{f_k_definition} on $X$ does not follow from lower semi-continuity of $\varphi_k$ or even from the inclusion $\varphi_k \in \Phi_+$, which is valid since functional \eqref{functional_auxiliary} in fact belongs to $\Phi_+$.
                To overcome this difficulty, Lemma \ref{lemma_variational_principle_G} is applied.

            \smallskip

            \noindent (ii)
                The only place in the proof where we use the assumption $\varphi \in \Phi_+$ and not just the fact that $\varphi$ is lower semi-continuous is the proof of inequality \eqref{overline_tau_leq}, which in turn is based on relation \eqref{convergence_of_varphi}.

            \smallskip

            \noindent (iii)
                The choice of the functional $\Psi$ in form \eqref{Psi} leads us to the fact that we derive convergence \eqref{lim0} with respect to the auxiliary metric $\rho_1$.
                In order to obtain such a convergence but with respect to the metric $\rho_\infty$, which would allow us to replace the assumption $\varphi \in \Phi_+$ with the requirement that $\varphi$ is lower semi-continuous, we could try to take the functional $\bar{V}$ from \eqref{widetilde_V} instead of $\Psi$ and use the first inequality in \eqref{V_estimates}.
                However, in this case, there would be difficulties with the $ci$-differentiability properties \eqref{nabla_1_Psi_property} and \eqref{nabla_2_Psi_property}.
                Indeed, as shown in Example \ref{example} (see also item (iii) of Remark \ref{remarks_on_variational_principle}), one of these properties may fail to be valid if $t^{(k)} \neq \tau^{(k)}$.

            \smallskip

            \noindent (iv)
                Another way of how one could try to drop the assumption $\varphi \in \Phi_+$ but still obtain inequality \eqref{overline_tau_leq} is to suppose that $\varphi \colon G \to \mathbb{R}$ is continuous and replace the set $X$ from \eqref{R} with the set
                \begin{multline*}
                    X_\ast
                    \triangleq \bigl\{ (t, x(\cdot)) \in G \bigm|
                    t \in [t_\ast, t_\ast + \delta], \,
                    x_{t_\ast}(\cdot) = x_\ast(\cdot), \\
                    \text{and } \exists \omega(\cdot) \in \Omega \colon \|x(\xi) - \omega(\xi)\| \leq \theta, \, \forall \xi \in [t_\ast, t] \bigr\},
                \end{multline*}
                where, in view of compactness of the set $\Omega$ and continuity of $\varphi$, we choose $\theta \in (0, \delta]$ such that the following property holds: for every $(t, x(\cdot)) \in X_\ast$, there exists $(t, \tilde{y}(\cdot)) \in Y$ satisfying the condition
                \begin{equation*}
                    \varphi(t, x(\cdot))
                    \geq \varphi(t, \tilde{y}(\cdot)) - \varepsilon_\ast \delta / 2.
                \end{equation*}
                Note that the set $X_\ast$ is closed and the inclusions $Y \subset X_\ast \subset X$ are valid.
                Then, arguing similarly to the given proof, we arrive at inequality \eqref{contr}.
                However, now, for any $k \in \mathbb{N}$, we can take $(t^{(k)}, \tilde{y}^{(k)}(\cdot)) \in Y$ such that $\varphi(t^{(k)}, x^{(k)}(\cdot)) \geq \varphi(t^{(k)}, \tilde{y}^{(k)}(\cdot)) - \varepsilon_\ast \delta / 2$ and, consequently, get
                \begin{equation*}
                    \varphi(t_\ast, x_\ast(\cdot)) + \psi_k(t_\ast, x_\ast(\cdot))
                    \geq \varphi(t^{(k)}, \tilde{y}^{(k)}(\cdot)) - \varepsilon_\ast (\tau^{(k)} - t_\ast + \delta / 2)
                    + \psi_k(t^{(k)}, x^{(k)}(\cdot)).
                \end{equation*}
                Since the set $Y$ is compact, we can assume that there exists $(\tilde{t}, \tilde{y}(\cdot)) \in D$ such that
                \begin{equation*}
                    \rho_\infty \bigl( (t^{(k)}, \tilde{y}^{(k)}(\cdot)), (\tilde{t}, \tilde{y}(\cdot)) \bigr)
                    \to 0
                    \text{ as } k \to \infty.
                \end{equation*}
                Recalling that $t^{(k)} \to \bar{\tau}$ as $k \to \infty$, we obtain $\tilde{t} = \bar{\tau}$.
                Hence, and due to continuity of $\varphi$ (actually, lower semi-continuity of $\varphi$ is enough), we derive
                \begin{equation*}
                    \varphi(t_\ast, x_\ast(\cdot))
                    \geq \varphi(\bar{\tau}, \tilde{y}(\cdot)) - \varepsilon_\ast (\bar{\tau} - t_\ast + \delta / 2)
                    = \varphi(t_\ast + \delta, \tilde{y}(\cdot)) - 3 \varepsilon_\ast \delta / 2,
                \end{equation*}
                which yields a contradiction with the choice of $\delta$ and completes the proof of inequality \eqref{overline_tau_leq}.
                On the other hand, the proposed replacement of $X$ with $X_\ast$ leads to difficulties with the proof of the inclusion $(p_0, p) \in D^- \varphi(t^{(k)}, x^{(k)}(\cdot))$, where we now need to show that $(t^{(k)} + \delta_z, z_{t^{(k)} + \delta_z}(\cdot)) \in X_\ast$.
                Namely, arguing similarly to the given proof, we can conclude that, since $(t^{(k)}, x^{(k)}(\cdot)) \in X_\ast$, there exists $\omega_\ast(\cdot) \in \Omega$ such that
                \begin{equation} \label{z_first_inequality}
                    \|z(\xi) - \omega_\ast(\xi)\|
                    \leq \theta,
                    \quad \forall \xi \in [t_\ast, t^{(k)}],
                \end{equation}
                and, due to the inclusion $(\tau^{(k)}, y^{(k)}(\cdot)) \in Y$, and under an appropriate choice of the parameters, there exists $\omega^\ast(\cdot) \in \Omega$ such that
                \begin{equation} \label{z_second_inequality}
                    \|z(\xi) - \omega^\ast(\xi)\|
                    \leq \theta,
                    \quad \forall \xi \in (t^{(k)}, t^{(k)} + \delta_z].
                \end{equation}
                However, in order to get that $(t^{(k)} + \delta_z, z_{t^{(k)} + \delta_z}(\cdot)) \in X_\ast$, it is required to find a single function from $\Omega$ for which both inequalities \eqref{z_first_inequality} and \eqref{z_second_inequality} hold simultaneously.

            \smallskip

            \noindent (v)
                It can be readily seen from the given proof of the lemma that the assumption $\varphi \in \Phi_+$ can be weakened, for example, as follows (in this connection, see, e.g., \cite[Proposition 4]{Lukoyanov_Plaksin_2019_MIAN_Eng}).
                Let us suppose that, for a functional $\varphi \colon G \to \mathbb{R}$, there are numbers $h_0 \in (0, h]$, $M \in \mathbb{N}$, and $\{\vartheta^{(m)}\}_{m = 1}^M \subset [- h, T]$ such that, if we consider the corresponding metric $\rho_1^\ast \colon G \times G \to \mathbb{R}$ given by (compare with \eqref{rho_1})
                \begin{multline*}
                    \rho_1^\ast \bigl( (t, x(\cdot)), (\tau, y(\cdot)) \bigr)
                    = |t - \tau|
                    + \|x(t) - y(\tau)\|
                    + \int_{- h}^{T} \|x(\xi \wedge t) - y(\xi \wedge \tau)\| \, \rd \xi \\
                    + \max_{\xi \in [- h, (t - h_0) \wedge (\tau - h_0)]} \|x(\xi) - y(\xi)\|
                    + \sum_{\vartheta^{(m)} \in [- h, t \wedge \tau]} \|x(\vartheta^{(m)}) - y(\vartheta^{(m)})\|
                \end{multline*}
                for all $(t, x(\cdot))$, $(\tau, y(\cdot)) \in G$, then the following lower semi-continuity property holds:
                for every point $(t, x(\cdot)) \in G$ and every sequence $\{(t^{(k)}, x^{(k)}(\cdot))\}_{k = 1}^\infty \subset G$, inequality \eqref{Phi_+} is valid provided that $\rho_1^\ast ((t^{(k)}, x^{(k)}(\cdot)), (t, x(\cdot))) \to 0$ as $k \to \infty$ and there exists $\alpha > 0$ such that $\{(t^{(k)}, x^{(k)}(\cdot))\}_{k = 1}^\infty \subset G(\alpha)$.
                Then, the only change that we need to make in the proof is to choose a number $\delta > 0$ satisfying inequalities \eqref{choice_delta} and also such that $\delta \leq h_0$ and the interval $(t_\ast, t_\ast + \delta]$ does not contain any of the points $\vartheta^{(m)}$ for all $m \in \overline{1, M}$.
                Indeed, it suffices to note that, in this case, the equality below is fulfilled:
                \begin{equation*}
                    \rho_1^\ast \bigl( (t, x(\cdot)), (\tau, y(\cdot)) \bigr)
                    = \rho_1 \bigl( (t, x(\cdot)), (\tau, y(\cdot)) \bigr),
                    \quad \forall (t, x(\cdot)) \in X, \quad \forall (\tau, y(\cdot)) \in Y.
                \end{equation*}
        \end{remark}

        In conclusion of this section, let us make a comment additional to the discussion from Section \ref{subsubsection_Lipschitz}.
        Note that, similarly to \eqref{derivatives_multivalued_via_single_valued}, for any functional $\varphi \in \Phi_{\Lip}$, any point $(t, x(\cdot)) \in G_0$, and any non-empty convex compact set $L \subset \mathbb{R}^n$, we have
        \begin{equation*}
            \rd^-_0 \varphi(t, x(\cdot); L)
            = \inf_{l \in L} \partial_\ast^- \varphi(t, x(\cdot); l).
        \end{equation*}
        This equality can be proved by essentially repeating the arguments from \cite[Section 5.4]{Lukoyanov_2006_IMM_Eng}, since, in order to obtain a relation like (5.8) in that paper, we can use definition \eqref{new_lower_directional_derivative_multivalued} of the value $\rd^-_0 \varphi(t, x(\cdot); L)$.
        In this connection, see also \cite[Proposition 4]{Lukoyanov_Plaksin_2019_MIAN_Eng}.
        Hence, Lemma \ref{lemma_CL} implies
        \begin{corollary} \label{corollary_CL}
            Let $\varphi \in \Phi_+ \cap \Phi_{\Lip}$, let $(t_\ast, x_\ast(\cdot)) \in G_0$, and let $L \subset \mathbb{R}^n$ be a non-empty convex compact set.
            Suppose that
            \begin{equation*}
                \inf_{l \in L} \partial_\ast^- \varphi(t_\ast, x_\ast(\cdot); l)
                > 0.
            \end{equation*}
            Then, for every $\eta > 0$, there exist $(t, x(\cdot)) \in O_\eta(t_\ast, x_\ast(\cdot))$ and $(p_0, p) \in D^-\varphi (t, x(\cdot))$, where $D^- \varphi(t, x(\cdot))$ is calculated by \eqref{subdifferential_finite}, such that inequality \eqref{lemma_CL_main} holds.
        \end{corollary}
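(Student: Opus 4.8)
The plan is to obtain Corollary~\ref{corollary_CL} as an essentially immediate consequence of Lemma~\ref{lemma_CL}: the only work is to convert the single-valued hypothesis $\inf_{l \in L} \partial_\ast^- \varphi(t_\ast, x_\ast(\cdot); l) > 0$ into the multi-valued hypothesis \eqref{lemma_CL_inequality_for_derivatives} required by the lemma, and then to recognize the $ci$-subdifferential produced by the lemma in its Lipschitz form \eqref{subdifferential_finite}.

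First I would record the auxiliary identity
\[
    \rd^-_0 \varphi(t_\ast, x_\ast(\cdot); L)
    = \inf_{l \in L} \partial_\ast^- \varphi(t_\ast, x_\ast(\cdot); l),
\]
valid for every $\varphi \in \Phi_{\Lip}$, which is stated in the paragraph preceding the corollary and is the analogue of \eqref{derivatives_multivalued_via_single_valued} with $\rd^-_0$ from \eqref{new_lower_directional_derivative_multivalued} in place of $\rd^-$. The inequality ``$\leq$'' is immediate from \eqref{new_and_old_lower_directional_derivatives_multivalued} combined with \eqref{derivatives_multivalued_via_single_valued}. For the application, however, it is the reverse inequality ``$\geq$'' that matters, since it is precisely what turns the assumption of the corollary into \eqref{lemma_CL_inequality_for_derivatives}.

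This reverse inequality is the genuinely technical point, and I would prove it by repeating the argument of \cite[Section 5.4]{Lukoyanov_2006_IMM_Eng} (see also \cite[Proposition 4]{Lukoyanov_Plaksin_2019_MIAN_Eng}). One takes an arbitrary admissible $\omega(\cdot) \in \Omega(t_\ast, x_\ast(\cdot), [L]^\delta)$ (see \eqref{Omega}) and a time $\tau \in (t_\ast, t_\ast + \delta]$ entering the infimum in \eqref{new_lower_directional_derivative_multivalued}, and, using the local Lipschitz condition \eqref{Phi_Lip}, compares $\varphi(\tau, \omega_\tau(\cdot))$ with $\varphi(\tau, z^{[l]}_\tau(\cdot))$ for the linear function $z^{[l]}(\cdot)$ from \eqref{z^l} with a suitable $l \in [L]^\delta$; the difference is controlled by $\rho_1$ and hence by a quantity vanishing with $\delta$. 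Definition \eqref{new_lower_directional_derivative_multivalued} of $\rd^-_0$ then supplies the analogue of estimate~(5.8) of that paper, and passing to the limit $\delta \downarrow 0$ together with compactness of $L$ yields $\rd^-_0 \varphi(t_\ast, x_\ast(\cdot); L) \geq \inf_{l \in L} \partial_\ast^- \varphi(t_\ast, x_\ast(\cdot); l)$. I expect this to be the main obstacle, although it is a verbatim adaptation of a known Lipschitz argument rather than new work.

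With the identity in hand, the hypothesis gives $\rd^-_0 \varphi(t_\ast, x_\ast(\cdot); L) > 0$, i.e.\ condition \eqref{lemma_CL_inequality_for_derivatives}. Since $\varphi \in \Phi_+$ and $L$ is a non-empty convex compact set, Lemma~\ref{lemma_CL} applies and produces, for the prescribed $\eta > 0$, a point $(t, x(\cdot)) \in O_\eta(t_\ast, x_\ast(\cdot))$ and a pair $(p_0, p) \in D^- \varphi(t, x(\cdot))$ satisfying \eqref{lemma_CL_main}. Finally, because $\varphi \in \Phi_{\Lip}$ and $(t, x(\cdot)) \in O_\eta(t_\ast, x_\ast(\cdot)) \subset G_0$, the $ci$-subdifferential $D^- \varphi(t, x(\cdot))$ coincides with the set computed by the single-valued formula \eqref{subdifferential_finite}; hence the pair $(p_0, p)$ is already of the required form, and the corollary follows.
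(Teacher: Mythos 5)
Your proposal is correct and follows essentially the same route as the paper: the paper likewise reduces the corollary to Lemma \ref{lemma_CL} via the identity $\rd^-_0 \varphi(t_\ast, x_\ast(\cdot); L) = \inf_{l \in L} \partial_\ast^- \varphi(t_\ast, x_\ast(\cdot); l)$ for $\varphi \in \Phi_{\Lip}$ (proved by adapting \cite[Section 5.4]{Lukoyanov_2006_IMM_Eng}, exactly as you indicate), and then identifies $D^- \varphi(t, x(\cdot))$ with the set \eqref{subdifferential_finite} using the earlier consequence of Proposition \ref{proposition_finite_derivatives}. Your additional observation that the ``$\leq$'' half of the identity is immediate from \eqref{new_and_old_lower_directional_derivatives_multivalued} and \eqref{derivatives_multivalued_via_single_valued}, while only ``$\geq$'' requires the Lipschitz argument, is accurate and consistent with the paper.
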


        Thus, comparing Corollary \ref{corollary_CL} with \cite[Lemma 4.7]{Plaksin_2020_JOTA}, we note that an important difference is that the latter result provides the existence of a point $(t, x(\cdot))$ and a pair $(p_0, p)$ satisfying inequality \eqref{lemma_CL_main} but only with a piecewise continuous function $x(\cdot)$, which is due to a different technique used in the proof.
        In turn, this circumstance leads to the need to consider the HJ equation \eqref{HJ} in the wider space of all piecewise continuous functions.

    \subsection{Proof of Theorem \ref{theorem_main}}
    \label{subsection_proof}

        The proof of the first part of Theorem \ref{theorem_main} consists of three steps, each of which is formulated as a separate lemma.

        \begin{lemma} \label{lemma_1}
            Let Assumption \ref{assumption_H1_H2} hold, and let a functional $\varphi \colon G \to \mathbb{R}$ be lower semi-continuous.
            Then, condition \eqref{upper_minimax} implies condition \eqref{upper_viscosity}.
        \end{lemma}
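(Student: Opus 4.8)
The plan is to treat this as the ``easy'' implication and to route it through the infinitesimal reformulation of the minimax condition provided by Proposition \ref{proposition_criteria_minimax}. Since $\varphi$ is lower semi-continuous and Assumption \ref{assumption_H1_H2} holds, Proposition \ref{proposition_criteria_minimax} lets me replace the non-local inequality \eqref{upper_minimax} by its equivalent local form \eqref{upper_minimax_derivatives_multi-valued}, namely
\[
    \rd^- \varphi_s \bigl( t, x(\cdot); B_{c_H}(t, x(\cdot)) \bigr) + H(t, x(\cdot), s)
    \leq 0, \quad \forall (t, x(\cdot)) \in G_0, \ \forall s \in \mathbb{R}^n.
\]
It then remains to show that this implies \eqref{upper_viscosity}, which is a direct comparison between the multi-valued lower derivative $\rd^-$ and the $ci$-subdifferential $D^- \varphi$.

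First I would fix $(t, x(\cdot)) \in G_0$ and $(p_0, p) \in D^- \varphi(t, x(\cdot))$, and specialize the displayed inequality to $s = p$. By the definition \eqref{subdifferential_via_directional} of $D^- \varphi$, the pair $(p_0, p)$ satisfies $p_0 \leq \partial^- \varphi_p(t, x(\cdot); z(\cdot))$ for every $z(\cdot) \in \Lip(t, x(\cdot))$. Since, for each $\varepsilon > 0$, every $\omega(\cdot) \in \Omega(t, x(\cdot), [B_{c_H}(t, x(\cdot))]^\varepsilon)$ lies in $\Lip(t, x(\cdot))$ by \eqref{Omega}, I obtain $p_0 \leq \inf_{\omega(\cdot)} \partial^- \varphi_p(t, x(\cdot); \omega(\cdot))$ over that set. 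Passing to the limit $\varepsilon \downarrow 0$ in \eqref{derivatives_multi-valued} --- the infimum being monotone in $\varepsilon$, so the bound survives the limit --- this yields $p_0 \leq \rd^- \varphi_p(t, x(\cdot); B_{c_H}(t, x(\cdot)))$. Combining with the infinitesimal minimax inequality at $s = p$ gives $p_0 + H(t, x(\cdot), p) \leq \rd^- \varphi_p(t, x(\cdot); B_{c_H}(t, x(\cdot))) + H(t, x(\cdot), p) \leq 0$, which is exactly \eqref{upper_viscosity}.

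I expect no genuine obstacle here: this is the straightforward half of the equivalence, and essentially all of the analytic weight is carried by Proposition \ref{proposition_criteria_minimax}, which is an already-established criterion independent of Theorem \ref{theorem_main}. The only points requiring a little care are that $B_{c_H}(t, x(\cdot))$ is a non-empty convex compact set (it is a closed ball of positive radius by \eqref{B}), so that $\rd^-$ and Proposition \ref{proposition_criteria_minimax} apply, and that the infimum defining $\rd^-$ is non-decreasing as $\varepsilon \downarrow 0$, which guarantees that the lower bound $p_0$ is preserved under the limit. The substantive difficulty of the whole equivalence lies in the reverse implication \eqref{upper_viscosity} $\Rightarrow$ \eqref{upper_minimax}, where the Property of the $ci$-subdifferential (Lemma \ref{lemma_CL}) is needed; the present direction does not invoke it.
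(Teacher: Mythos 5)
Your proposal is correct and follows essentially the same route as the paper's proof: both pass through Proposition \ref{proposition_criteria_minimax} to replace \eqref{upper_minimax} by \eqref{upper_minimax_derivatives_multi-valued}, then compare $\rd^-\varphi_p$ with the $ci$-subdifferential at $s = p$. The only cosmetic difference is that you extract the bound $p_0 \leq \rd^-\varphi_p\bigl(t, x(\cdot); B_{c_H}(t, x(\cdot))\bigr)$ directly from monotonicity of the infima, whereas the paper picks a near-minimizing $\omega(\cdot)$ for each $\eta > 0$ and lets $\eta \downarrow 0$; the two arguments are trivially equivalent.
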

        \begin{proof}
            It follows from Proposition \ref{proposition_criteria_minimax} that \eqref{upper_minimax} implies \eqref{upper_minimax_derivatives_multi-valued}.
            Hence, it suffices to show that \eqref{upper_minimax_derivatives_multi-valued} implies \eqref{upper_viscosity}.
            Note that this fact was proved in \cite[Proposition 14.1]{Lukoyanov_2011_Eng}.
            However, since the proof is available in Russian only, we present it below for the reader's convenience.

            Suppose that a functional $\varphi \colon G \to \mathbb{R}$ satisfies \eqref{upper_minimax_derivatives_multi-valued}.
            Let us take $(t, x(\cdot)) \in G_0$ and $(p_0, p) \in D^- \varphi(t, x(\cdot))$ and fix $\eta > 0$.
            Due to \eqref{upper_minimax_derivatives_multi-valued} with $s \triangleq p$, we have
            \begin{equation*}
                \rd^- \varphi_p \bigl( t, x(\cdot); B_{c_H}(t, x(\cdot)) \bigr) + H(t, x(\cdot), p)
                \leq 0.
            \end{equation*}
            Then, according to \eqref{derivatives_multi-valued}, there exist $\varepsilon > 0$ and $\omega(\cdot) \in \Omega(t, x(\cdot), [B_{c_H}(t, x(\cdot))]^\varepsilon)$ such that
            \begin{equation} \label{lemma_1_proof_1}
                \partial^- \varphi_p (t, x(\cdot); \omega(\cdot)) + H(t, x(\cdot), p)
                \leq \eta.
            \end{equation}
            On the other hand, by \eqref{subdifferential_via_directional}, we obtain
            \begin{equation} \label{lemma_1_proof_2}
                p_0
                \leq \partial^- \varphi_p (t, x(\cdot); \omega(\cdot)).
            \end{equation}
            From \eqref{lemma_1_proof_1} and \eqref{lemma_1_proof_2}, we derive $p_0 + H(t, x(\cdot), p) \leq \eta$.
            Since this estimate holds for any $\eta > 0$, we conclude that the inequality in \eqref{upper_viscosity} is valid.
            The lemma is proved.
        \end{proof}

        \begin{lemma} \label{lemma_2}
            Let Assumption \ref{assumption_H1_H2} hold, and let a functional $\varphi \colon G \to \mathbb{R}$ be lower semi-continuous and satisfy the condition
            \begin{equation} \label{upper_minimax_new_derivatives_multi-valued}
                \rd^-_0 \varphi_s \bigl( t, x(\cdot); B_{c_H}(t, x(\cdot)) \bigr) + H(t, x(\cdot), s)
                \leq 0,
                \quad \forall (t, x(\cdot)) \in G_0, \quad \forall s \in \mathbb{R}^n,
            \end{equation}
            where the functional $\varphi_s$ is defined by $\varphi$ and $s$ according to \eqref{varphi_s}.
            Then, the functional $\varphi$ satisfies condition \eqref{upper_minimax}.
        \end{lemma}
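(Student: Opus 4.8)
The plan is to fix a point $(t_0, x_0(\cdot)) \in G_0$, a time $\vartheta \in (t_0, T]$, and a covector $s \in \mathbb{R}^n$, introduce the running quantity
\begin{equation*}
    W(\theta, z(\cdot))
    \triangleq \varphi_s(\theta, z_\theta(\cdot)) + \int_{t_0}^{\theta} H(\xi, z_\xi(\cdot), s) \, \rd \xi,
\end{equation*}
and show that the infimum in \eqref{upper_minimax} does not exceed $\varphi_s(t_0, x_0(\cdot))$. The strategy is two-stage: for each $\mu > 0$ and each small $\delta > 0$ I would construct a function $z(\cdot) \in \Lip(t_0, x_0(\cdot))$ whose velocity lies in the $\delta$-enlargement of the characteristic ball, i.e. $\dot z(\tau) \in [B_{c_H}(\tau, z_\tau(\cdot))]^\delta$ for a.e. $\tau \in [t_0, \vartheta]$, and along which $W(\vartheta, z(\cdot)) \le \varphi_s(t_0, x_0(\cdot)) + \mu$; then a compactness-and-limit argument upgrades this to an exact element of $\Lip_{c_H}(t_0, x_0(\cdot))$ realizing the bound $\le \varphi_s(t_0, x_0(\cdot))$.

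The local step is where hypothesis \eqref{upper_minimax_new_derivatives_multi-valued} enters. At any $(t', x'(\cdot)) \in G_0$ with $t' < \vartheta$ it gives $\rd^-_0 \varphi_s(t', x'(\cdot); B_{c_H}(t', x'(\cdot))) \le - H(t', x'(\cdot), s)$, so by definition \eqref{new_lower_directional_derivative_multivalued} there is $\bar\delta > 0$ such that for every $\delta \in (0, \bar\delta]$ one finds $\tau' \in (t', t' + \delta]$ and $\omega(\cdot) \in \Omega(t', x'(\cdot), [B_{c_H}(t', x'(\cdot))]^\delta)$ along which the difference quotient of $\varphi_s$ does not exceed $-H(t', x'(\cdot), s) + \mu / (2 (\vartheta - t_0))$. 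Shrinking $\bar\delta$ and invoking continuity of $H$ (Assumption \ref{assumption_H1_H2}(i)) together with the uniform Lipschitz bound on the short extension $\omega(\cdot)$, the running integral $\int_{t'}^{\tau'} H(\xi, \omega_\xi(\cdot), s) \, \rd \xi$ differs from $H(t', x'(\cdot), s)(\tau' - t')$ by at most $\tfrac{\mu}{2(\vartheta - t_0)} (\tau' - t')$. Combining the two estimates, $W$ increases over this step by at most $\tfrac{\mu}{\vartheta - t_0}(\tau' - t')$, while the velocity used stays in $[B_{c_H}(t', x'(\cdot))]^{\delta}$.

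To cross the whole interval I would chain such steps by a maximal-extension (Zorn's lemma) argument applied to the family of partial trajectories $(\theta, z(\cdot))$, with $\theta \in (t_0, \vartheta]$, $z_{t_0}(\cdot) = x_0(\cdot)$, $\dot z(\tau) \in [B_{c_H}(\tau, z_\tau(\cdot))]^{\delta}$ a.e. on $[t_0, \theta]$, and the additive bound $W(\theta, z(\cdot)) \le \varphi_s(t_0, x_0(\cdot)) + \tfrac{\mu}{\vartheta - t_0}(\theta - t_0)$, ordered by extension. The crucial observation that makes the velocity constraint compatible with the local step is the monotonicity of the characteristic radius along extensions: since $z_\tau(\cdot)$ prolongs $x'(\cdot)$, one has $\|z_\tau(\cdot)\|_{[- h, \tau]} \ge \|x'(\cdot)\|_{[- h, t']}$, hence $B_{c_H}(t', x'(\cdot)) \subset B_{c_H}(\tau, z_\tau(\cdot))$ (see \eqref{B}) and a velocity in $[B_{c_H}(t', x'(\cdot))]^{\delta}$ automatically lies in $[B_{c_H}(\tau, z_\tau(\cdot))]^{\delta}$. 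A maximal element must have $\theta = \vartheta$, for otherwise the local step would extend it while preserving both the velocity constraint and the additive bound; and every chain admits an upper bound, the limiting trajectory inheriting the velocity constraint and, through lower semicontinuity of $\varphi$ together with convergence of the integral term, the additive bound at the supremum time.

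The passage to the limit in $\mu, \delta \downarrow 0$ is then routine: the velocity bound $\|\dot z(\tau)\| \le c_H(1 + \|z_\tau(\cdot)\|_{[- h, \tau]}) + \delta$ yields, via Gronwall, uniformly bounded states and hence uniformly bounded velocities, so Arzel\`a--Ascoli extracts a uniform limit $z^\ast(\cdot)$; a weak-$L^1$ compactness/convexity argument places $\dot z^\ast(\tau)$ in $B_{c_H}(\tau, z^\ast_\tau(\cdot))$ as the enlargement vanishes, so $z^\ast(\cdot) \in \Lip_{c_H}(t_0, x_0(\cdot))$; continuity of $H$ gives convergence of the integral terms; and lower semicontinuity of $\varphi$ gives $\varphi_s(\vartheta, z^\ast_\vartheta(\cdot)) \le \liminf$ of the approximating values, whence $W(\vartheta, z^\ast(\cdot)) \le \varphi_s(t_0, x_0(\cdot))$, which is exactly \eqref{upper_minimax}. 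I expect the main obstacle to be the chaining construction of the third paragraph — crossing the interval by concatenating merely infinitesimal decrease directions while simultaneously keeping the velocity inside the (moving, state-dependent) enlarged characteristic ball and propagating the additive $W$-bound through the limit of a chain using only lower semicontinuity of $\varphi$; the radius-monotonicity remark is what keeps the velocity bookkeeping clean, and it is notable that upper semicontinuity of $\varphi$ is never needed.
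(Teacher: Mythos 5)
Your proof is correct, and it is essentially the same argument the paper invokes: the paper's own proof of this lemma is a short reference to the second part of the proof of Theorem 8.1 in \cite{Lukoyanov_2003_1}, noting that the weaker hypothesis involving $\rd^-_0$ (rather than $\rd^-$) still supplies the local decrease relations needed there, and that cited argument follows exactly your scheme --- a local step extracted from the infinitesimal condition, a chaining (maximal-extension/supremum-of-times) argument to cross the interval while keeping velocities in the enlarged characteristic balls, and an Arzel\`a--Ascoli plus lower-semicontinuity passage to the limit as the enlargement and tolerance vanish. Your radius-monotonicity observation $B_{c_H}(t', x'(\cdot)) \subset B_{c_H}(\tau, z_\tau(\cdot))$ along extensions and the Zorn-style bookkeeping are sound ways of organizing that chaining, so the proposal stands as a self-contained reconstruction of the proof the paper cites.
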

        \begin{proof}
            The proof can be carried out by repeating the arguments from the second part of the proof of \cite[Theorem 8.1]{Lukoyanov_2003_1} (see also Proposition \ref{proposition_criteria_minimax} above).
            Indeed, the only difference is that the inequality in \eqref{upper_minimax_new_derivatives_multi-valued} involves the value $\rd^-_0 \varphi_s(t, x(\cdot); B_{c_H}(t, x(\cdot)))$ (see \eqref{new_lower_directional_derivative_multivalued}) instead of the directional derivative $\rd^- \varphi_s (t, x(\cdot); B_{c_H}(t, x(\cdot)))$ (see \eqref{derivatives_multi-valued}).
            Nevertheless, it follows directly from \eqref{new_lower_directional_derivative_multivalued} that condition \eqref{upper_minimax_new_derivatives_multi-valued} is sufficient in order to obtain relations (8.15) and (8.16) from \cite{Lukoyanov_2003_1}.
            The details are omitted.
        \end{proof}

        \begin{remark}
            Let us suppose that Assumption \ref{assumption_H1_H2} hold and take a lower semi-continuous functional $\varphi \colon G \to \mathbb{R}$.
            Then, by Proposition \ref{proposition_criteria_minimax}, condition \eqref{upper_minimax} implies condition \eqref{upper_minimax_derivatives_multi-valued}, which, in turn, implies condition \eqref{upper_minimax_new_derivatives_multi-valued} due to inequality \eqref{new_and_old_lower_directional_derivatives_multivalued}.
            Thus, as a corollary of Lemma \ref{lemma_2}, we obtain that all three conditions \eqref{upper_minimax}, \eqref{upper_minimax_derivatives_multi-valued}, and \eqref{upper_minimax_new_derivatives_multi-valued} are equivalent.
        \end{remark}

        \begin{lemma} \label{lemma_3}
            Let Assumption \ref{assumption_H1_H2} hold.
            Then, condition \eqref{upper_viscosity} implies condition \eqref{upper_minimax_new_derivatives_multi-valued} for every functional $\varphi \in \Phi_+$.
        \end{lemma}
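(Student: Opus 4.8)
The plan is to prove the contrapositive: assuming condition \eqref{upper_minimax_new_derivatives_multi-valued} fails at some point, I will construct a point of the $ci$-subdifferential that violates the viscosity inequality \eqref{upper_viscosity}, which contradicts the hypothesis. This is exactly the situation Lemma \ref{lemma_CL} is built to handle, so the whole argument reduces to feeding the negation of \eqref{upper_minimax_new_derivatives_multi-valued} into Lemma \ref{lemma_CL} and reading off the conclusion.

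Concretely, suppose \eqref{upper_minimax_new_derivatives_multi-valued} fails: there exist $(t_\ast, x_\ast(\cdot)) \in G_0$ and $s \in \mathbb{R}^n$ with
\[
    \rd^-_0 \varphi_s \bigl( t_\ast, x_\ast(\cdot); B_{c_H}(t_\ast, x_\ast(\cdot)) \bigr) + H(t_\ast, x_\ast(\cdot), s)
    > 0.
\]
Writing $L \triangleq B_{c_H}(t_\ast, x_\ast(\cdot))$, which is a non-empty convex compact set by \eqref{B}, this says $\rd^-_0 \varphi_s(t_\ast, x_\ast(\cdot); L) > - H(t_\ast, x_\ast(\cdot), s)$. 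I would apply Lemma \ref{lemma_CL} to the functional $\varphi_s$ (which still lies in $\Phi_+$, since the map $(t, x(\cdot)) \mapsto \langle s, x(t)\rangle$ is continuous with respect to $\rho_1$ on each $G(\alpha)$) but with the inequality shifted by a constant; more precisely, I would consider $\rd^-_0 \varphi_s(t_\ast, x_\ast(\cdot); L) > 0$ after absorbing the constant $-H(t_\ast, x_\ast(\cdot), s)$ into a translation of $\varphi_s$ in the time variable, or equivalently work with the strict positivity directly. Lemma \ref{lemma_CL} then produces, for any $\eta > 0$, a point $(t, x(\cdot)) \in O_\eta(t_\ast, x_\ast(\cdot))$ and a pair $(p_0, p) \in D^- \varphi_s(t, x(\cdot))$ with $p_0 + \langle l, p\rangle > 0$ for all $l \in L$.

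The final step is to translate this subgradient of $\varphi_s$ back into a subgradient of $\varphi$ and to confront it with \eqref{upper_viscosity}. By the definition \eqref{varphi_s} and \eqref{subdifferential_via_directional}, one checks that $(p_0, p) \in D^- \varphi_s(t, x(\cdot))$ is equivalent to $(p_0, p + s) \in D^- \varphi(t, x(\cdot))$. Since $p_0 + \langle l, p\rangle > 0$ for all $l \in L = B_{c_H}(t, x(\cdot))$ — here I must exploit that $\eta$ is taken small enough that $B_{c_H}(t, x(\cdot))$ is close to $B_{c_H}(t_\ast, x_\ast(\cdot))$, using continuity of the norm — taking the infimum over $l$ and invoking condition (ii) of Assumption \ref{assumption_H1_H2} to bound $|H(t, x(\cdot), p+s) - H(t, x(\cdot), s)| \le c_H(1 + \|x(\cdot)\|_{[-h,t]})\|p\|$, I would derive $p_0 + H(t, x(\cdot), p+s) > 0$. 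This contradicts \eqref{upper_viscosity} applied to the $ci$-subgradient $(p_0, p+s) \in D^-\varphi(t, x(\cdot))$.

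The main obstacle is the careful bookkeeping at the interface between the two balls $B_{c_H}(t_\ast, x_\ast(\cdot))$ and $B_{c_H}(t, x(\cdot))$: the constant $c_H(1 + \|x_\ast(\cdot)\|_{[-h,t_\ast]})$ defining $L$ is evaluated at the original point, whereas the viscosity inequality and the Hamiltonian estimate are evaluated at the perturbed point $(t, x(\cdot))$. I expect to handle this by a limiting argument — letting $\eta \downarrow 0$ along a sequence and using $\rho_\infty$-continuity of $\|x(\cdot)\|_{[-h,t]}$ together with the continuity of $H(\cdot,\cdot,s)$ from Assumption \ref{assumption_H1_H2}(i) — so that the strict inequality $\rd^-_0 \varphi_s > -H$ at the base point survives the perturbation with room to spare, yielding the desired strict violation of \eqref{upper_viscosity} at a nearby point and completing the contrapositive.
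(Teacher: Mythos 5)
Your overall scheme is the same as the paper's (argue by contradiction, shift $\varphi_s$ by a linear-in-time term, apply Lemma \ref{lemma_CL}, translate the resulting subgradient back via \eqref{varphi_s} and \eqref{subdifferential_via_directional}, and contradict \eqref{upper_viscosity} through Assumption \ref{assumption_H1_H2}(ii)), and your bookkeeping identities are correct: $(p_0,p) \in D^-\varphi_s(t,x(\cdot))$ is indeed equivalent to $(p_0, p+s) \in D^-\varphi(t,x(\cdot))$, with the analogous shift in $p_0$ for the time-linear term. However, there is a genuine gap at the decisive step. Lemma \ref{lemma_CL} gives $p_0 + \langle l, p\rangle > 0$ only for $l$ in the set you feed it, namely $L = B_{c_H}(t_\ast, x_\ast(\cdot))$, whereas the contradiction with \eqref{upper_viscosity} requires this inequality for all $l \in B_{c_H}(t, x(\cdot))$, the ball at the \emph{perturbed} point: Assumption \ref{assumption_H1_H2}(ii) yields $H(t, x(\cdot), p+s) - H(t,x(\cdot),s) \geq \min_{l \in B_{c_H}(t, x(\cdot))}\langle p, l\rangle$, and $B_{c_H}(t, x(\cdot))$ can be strictly larger than $L$. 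Your proposed repairs --- taking $\eta$ small so that the two balls are close, or passing to a limit along $\eta \downarrow 0$ --- do not close this gap, because the pair $(p_0,p)$ produced by Lemma \ref{lemma_CL} is not uniformly bounded as $\eta \downarrow 0$: in its proof, $p$ contains the term $-k\nabla_1\Psi$ with $k$ chosen arbitrarily large. Hence the error $\|p\|$ times the gap between the radii of the two balls cannot be made small by shrinking $\eta$, the strict inequality on $L$ carries no margin that dominates this error, and there is no compactness allowing a simultaneous limit passage in the points $(t,x(\cdot))$ and the subgradients $(p_0,p)$.

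The paper resolves precisely this point by enlarging the direction set \emph{before} invoking Lemma \ref{lemma_CL}. Using Proposition \ref{proposition_lower_semicontinuity_of_directional_derivatives} (lower semicontinuity of $\rd^-_0$ under enlargement of the multi-valued direction), the failure of \eqref{upper_minimax_new_derivatives_multi-valued} is upgraded to $\rd^-_0 \varphi_{s_\ast}( t_\ast, x_\ast(\cdot); [ B_{c_H}(t_\ast, x_\ast(\cdot)) ]^\nu ) + H(t_\ast, x_\ast(\cdot), s_\ast) > \varepsilon$ for some $\nu > 0$ and $\varepsilon > 0$; then $\eta$ is chosen so that both $B_{c_H}(t, x(\cdot)) \subset [ B_{c_H}(t_\ast, x_\ast(\cdot)) ]^\nu$ and $|H(t,x(\cdot),s_\ast) - H(t_\ast, x_\ast(\cdot), s_\ast)| \leq \varepsilon$ hold on $O_\eta(t_\ast, x_\ast(\cdot))$, and Lemma \ref{lemma_CL} is applied to $\tilde{\varphi} = \varphi_{s_\ast} + (H(t_\ast,x_\ast(\cdot),s_\ast) - \varepsilon)(t - t_\ast)$ with the \emph{enlarged} set. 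The conclusion of Lemma \ref{lemma_CL} then automatically covers $B_{c_H}(t,x(\cdot))$, and the $\varepsilon$ built into the time shift absorbs the $H$-continuity error with the correct sign. Without this preliminary enlargement --- i.e., without Proposition \ref{proposition_lower_semicontinuity_of_directional_derivatives} or an equivalent device, which is entirely absent from your proposal --- the argument cannot be completed.
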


        The proof of Lemma \ref{lemma_3} uses the following lower semi-continuity property of the value $\rd^-_0 \varphi(t, x(\cdot); L)$ from \eqref{new_lower_directional_derivative_multivalued} with respect to the variation of the set $L$.
        \begin{proposition} \label{proposition_lower_semicontinuity_of_directional_derivatives}
            Let $\varphi \colon G \to \mathbb{R}$, let $(t, x(\cdot)) \in G_0$, and let $L \subset \mathbb{R}^n$ be a non-empty convex compact set.
            Then, for every $A \in \mathbb{R}$ satisfying $\rd^-_0 \varphi(t, x(\cdot); L) > A$, there exists $\nu > 0$ such that, for every non-empty convex compact set $K \subset \mathbb{R}^n$ with $K \subset [L]^\nu$, the inequality below is valid:
            \begin{equation} \label{proposition_lower_semicontinuity_of_directional_derivatives_main}
                \rd^-_0 \varphi(t, x(\cdot); K)
                \geq A.
            \end{equation}
        \end{proposition}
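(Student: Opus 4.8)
The plan is to exploit the monotonicity of the quantity under the limit in \eqref{new_lower_directional_derivative_multivalued} with respect to $\delta$ and then to compare the feasible families for $K$ and for $L$ directly, without any limiting or compactness argument in $K$. For a non-empty convex compact set $M \subset \mathbb{R}^n$ and $\delta > 0$, I would write
\[
    g_M(\delta)
    \triangleq \inf \biggl\{ \frac{\varphi(\tau, \omega_\tau(\cdot)) - \varphi(t, x(\cdot))}{\tau - t} \biggm| \tau \in (t, t + \delta], \, \omega(\cdot) \in \Omega(t, x(\cdot), [M]^\delta) \biggr\},
\]
so that $\rd^-_0 \varphi(t, x(\cdot); M) = \lim_{\delta \downarrow 0} g_M(\delta)$. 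As $\delta$ decreases, both the admissible time interval $(t, t + \delta]$ and the set $[M]^\delta$ shrink, hence $\Omega(t, x(\cdot), [M]^\delta)$ shrinks (see \eqref{Omega}); thus the infimum is taken over a smaller family and $g_M(\delta)$ is non-increasing in $\delta$. Consequently the limit equals $\sup_{\delta > 0} g_M(\delta)$. Since $\rd^-_0 \varphi(t, x(\cdot); L) > A$ by hypothesis, I would first fix a single scale $\delta_0 > 0$ with $g_L(\delta_0) > A$; non-emptiness of every feasible family is clear, since $\omega(\tau) = x(t) + (\tau - t) l$ with any $l \in M$ belongs to $\Omega(t, x(\cdot), [M]^\delta)$.

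Next I would transfer this good scale from $L$ to every competitor $K$ by setting $\nu \triangleq \delta_0 / 2$ and $\delta_1 \triangleq \delta_0 / 2$. The \emph{crux} is an elementary neighbourhood inclusion: if $K \subset [L]^\nu$, then $[K]^{\delta_1} \subset [L]^{\nu + \delta_1} = [L]^{\delta_0}$, because any point within $\delta_1$ of $K$ lies within $\delta_1 + \nu$ of $L$. Monotonicity of $\Omega$ in its third argument then yields $\Omega(t, x(\cdot), [K]^{\delta_1}) \subset \Omega(t, x(\cdot), [L]^{\delta_0})$, while trivially $(t, t + \delta_1] \subset (t, t + \delta_0]$. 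Hence the family over which $g_K(\delta_1)$ is computed is contained in the family defining $g_L(\delta_0)$, and so $g_K(\delta_1) \geq g_L(\delta_0) > A$. Using once more that $\rd^-_0 \varphi(t, x(\cdot); K) = \sup_{\delta > 0} g_K(\delta) \geq g_K(\delta_1)$, I would conclude $\rd^-_0 \varphi(t, x(\cdot); K) > A$, which is even slightly stronger than \eqref{proposition_lower_semicontinuity_of_directional_derivatives_main} and holds with the uniform choice $\nu = \delta_0 / 2$ independent of $K$.

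The only genuine subtlety — and the point I expect to require care — is the coupling of the two roles of the single parameter $\delta$ in definition \eqref{new_lower_directional_derivative_multivalued}: it simultaneously sets the radius of the direction neighbourhood $[\cdot]^\delta$ and the length of the admissible time interval. Because of this coupling one cannot simply compare $g_K(\delta)$ and $g_L(\delta)$ at a common $\delta$. The monotonicity observation resolves the difficulty cleanly: it lets me evaluate $K$ at the smaller scale $\delta_1 = \delta_0/2$ while absorbing the neighbourhood shift $\nu$ into the remaining budget, so that the perturbed data for $K$ still fit inside the region where the difference quotient exceeds $A$. No local compactness of the underlying space enters, and the continuity of $\varphi$ is not needed at all for this step.
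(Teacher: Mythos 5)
Your proof is correct and takes essentially the same route as the paper: fix a scale $\delta_0$ at which the infimum defining $\rd^-_0 \varphi(t, x(\cdot); L)$ exceeds (or meets) $A$, set $\nu \triangleq \delta_0/2$, and use the triangle-inequality inclusion $[K]^{\eta} \subset [L]^{\nu + \eta} \subset [L]^{\delta_0}$ together with the monotonicity of $\Omega$ in its third argument to dominate the $K$-infimum by the $L$-infimum. The only cosmetic difference is that you make the monotonicity of the infimum in $\delta$ explicit and evaluate at the single scale $\delta_1 = \delta_0/2$, whereas the paper lets $\eta$ range over all of $(0, \delta_0/2]$ and passes to the limit in the definition of $\rd^-_0 \varphi(t, x(\cdot); K)$ --- the same argument.
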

        \begin{proof}
            Let us choose $\delta \in (0, T - t]$ from the condition
            \begin{equation} \label{proposition_choice_delta}
                A
                \leq \inf \biggl\{ \frac{\varphi(\tau, \omega_\tau(\cdot)) - \varphi(t, x(\cdot))}{\tau - t} \biggm|
                \tau \in (t, t + \delta], \, \omega(\cdot) \in \Omega(t, x(\cdot), [L]^\delta) \biggr\}
            \end{equation}
            and set $\nu \triangleq \delta / 2 > 0$.

            Now, let $K \subset \mathbb{R}^n$ be a non-empty convex compact set such that $K \subset [L]^\nu$.
            Then, for every $\eta \in (0, \delta / 2]$, we have $(t, t + \eta] \subset (t, t + \delta]$ and $\Omega(t, x(\cdot), [K]^\eta) \subset \Omega(t, x(\cdot), [L]^\delta)$.
            Hence, and due to \eqref{proposition_choice_delta}, we obtain
            \begin{equation*}
                \inf \biggl\{ \frac{\varphi(\tau, \omega_\tau(\cdot)) - \varphi(t, x(\cdot))}{\tau - t} \biggm|
                \tau \in (t, t + \eta], \, \omega(\cdot) \in \Omega(t, x(\cdot), [K]^\eta) \biggr\}
                \geq A
            \end{equation*}
            for all $\eta \in (0, \delta / 2]$, which yields the desired inequality \eqref{proposition_lower_semicontinuity_of_directional_derivatives_main}.
        \end{proof}

    \begin{proof}[Proof of Lemma \ref{lemma_3}]
        Let $\varphi \in \Phi_+$ be a functional satisfying condition \eqref{upper_viscosity}.
        Arguing by contradiction, assume that condition \eqref{upper_minimax_new_derivatives_multi-valued} does not hold, i.e., there are $(t_\ast, x_\ast(\cdot)) \in G_0$ and $s_\ast \in \mathbb{R}^n$ for which the inequality below is valid:
        \begin{equation*}
            \rd^-_0 \varphi_{s_\ast} \bigl( t_\ast, x_\ast(\cdot); B_{c_H}(t_\ast, x_\ast(\cdot)) \bigr) + H(t_\ast, x_\ast(\cdot), s_\ast)
            > 0.
        \end{equation*}
        Then, and in view of Proposition \ref{proposition_lower_semicontinuity_of_directional_derivatives}, we can find $\nu > 0$ and $\varepsilon > 0$ such that
        \begin{equation} \label{nu_epsilon}
            \rd^-_0 \varphi_{s_\ast} \bigl( t_\ast, x_\ast(\cdot); \bigl[ B_{c_H}(t_\ast, x_\ast(\cdot)) \bigr]^\nu \bigr)
            + H(t_\ast, x_\ast(\cdot), s_\ast)
            > \varepsilon.
        \end{equation}
        According to condition (i) of Assumption \ref{assumption_H1_H2} and the definition of the set $B_{c_H}(t_\ast, x_\ast(\cdot))$ (see \eqref{B}), let us choose $\eta > 0$ such that
        \begin{equation} \label{eta}
            |H(t, x(\cdot), s_\ast) - H(t_\ast, x_\ast(\cdot), s_\ast)|
            \leq \varepsilon,
            \quad B_{c_H}(t, x(\cdot))
            \subset \bigl[ B_{c_H}(t_\ast, x_\ast(\cdot)) \bigr]^\nu
        \end{equation}
        for all $(t, x(\cdot)) \in O_\eta(t_\ast, x_\ast(\cdot))$ (see \eqref{neighbourhood}).

        Let us consider the functional $\tilde{\varphi} \colon G \to \mathbb{R}$ given by
        \begin{equation} \label{widetilde_varphi}
            \tilde{\varphi}(t, x(\cdot))
            \triangleq \varphi_{s_\ast}(t, x(\cdot)) + (H(t_\ast, x_\ast(\cdot), s_\ast) - \varepsilon) (t - t_\ast),
            \quad \forall (t, x(\cdot)) \in G.
        \end{equation}
        Due to the definitions of the set $\Phi_+$ (see Section \ref{subsection_minimax}) and the metric $\rho_1$ (see \eqref{rho_1}), we obtain that the inclusion $\varphi \in \Phi_+$ yields $\varphi_{s_\ast} \in \Phi_+$ (see \eqref{varphi_s}) and $\tilde{\varphi} \in \Phi_+$.
        In addition, taking \eqref{new_lower_directional_derivative_multivalued} into account and using \eqref{nu_epsilon}, we derive
        \begin{multline*}
            \rd^-_0 \tilde{\varphi} \bigl( t_\ast, x_\ast(\cdot); \bigl[ B_{c_H}(t_\ast, x_\ast(\cdot)) \bigr]^\nu \bigr) \\
            = \rd^-_0 \varphi_{s_\ast} \bigl( t_\ast, x_\ast(\cdot); \bigl[ B_{c_H}(t_\ast, x_\ast(\cdot)) \bigr]^\nu \bigr)
            + H(t_\ast, x_\ast(\cdot), s_\ast) - \varepsilon
            > 0.
        \end{multline*}
        Hence, applying Lemma \ref{lemma_CL}, we conclude that there exist $(t, x(\cdot)) \in O_\eta(t_\ast, x_\ast(\cdot))$ and $(\tilde{p}_0, \tilde{p}) \in D^- \tilde{\varphi}(t, x(\cdot))$ such that
        \begin{equation}\label{using_lemma}
            \tilde{p}_0 + \langle \tilde{p}, l \rangle
            > 0,
            \quad \forall l \in \bigl[ B_{c_H}(t_\ast, x_\ast(\cdot)) \bigr]^\nu.
        \end{equation}

        Set $p_0 \triangleq \tilde{p}_0 - H(t_\ast, x_\ast(\cdot), s_\ast) + \varepsilon$ and $p \triangleq \tilde{p} + s_\ast$.
        Then, by \eqref{subdifferential_via_directional} and \eqref{widetilde_varphi}, we get $(p_0, p) \in D^- \varphi(t, x(\cdot))$.
        Consequently, using condition \eqref{upper_viscosity} as well as condition (ii) of Assumption \ref{assumption_H1_H2} and the choice of $\eta$ (see \eqref{eta}), we derive
        \begin{align*}
            0
            & \geq p_0 + H(t, x(\cdot), p)
            \geq \tilde{p}_0 - H(t, x(\cdot), s_\ast) + H(t, x(\cdot), \tilde{p} + s_\ast) \\
            & \geq \tilde{p}_0 + \min_{l \in B_{c_H}(t, x(\cdot))} \langle \tilde{p}, l \rangle
            \geq \tilde{p}_0 + \min_{l \in [B_{c_H}(t_\ast, x_\ast(\cdot))]^\nu} \langle \tilde{p}, l \rangle
        \end{align*}
        and come to a contradiction with \eqref{using_lemma}.
        The lemma is proved.
    \end{proof}

    Putting together Lemmas \ref{lemma_1}--\ref{lemma_3}, we obtain that, under Assumption \ref{assumption_H1_H2}, conditions \eqref{upper_minimax} and \eqref{upper_viscosity} are equivalent for every functional $\varphi \in \Phi_+$, which is actually the first part of Theorem \ref{theorem_main} in view of Definitions \ref{definition_minimax} and \ref{definition_viscosity}.

\section{Acknowledgments}

    We would like to thank Prof. Andrea Cosso for a discussion on the subject of this paper and for pointing us to the paper \cite{Li_Shi_2000}.

\appendix

\section{Proofs of Section \ref{section_Discussion}}
\label{appendix_proofs}

    \subsection{Proofs of Propositions \ref{proposition_criteria_viscosity} and \ref{proposition_test_functionals}}
    \label{appendix_proofs_1}

        For brevity, we prove both Proposition \ref{proposition_criteria_viscosity} and Proposition \ref{proposition_test_functionals} at once.

        Let $\varphi \colon G \to \mathbb{R}$ and $(t, x(\cdot)) \in G_0$.
        Let us first show that condition \eqref{upper_viscosity} implies condition \eqref{upper_viscosity_derivatives_along_extension}.
        Let us fix $s \in \mathbb{R}^n$ and denote
        \begin{equation*}
            p^\ast_0
            \triangleq \inf_{z(\cdot) \in \Lip(t,x(\cdot))} \partial^- \varphi_s(t, x(\cdot); z(\cdot)).
        \end{equation*}
        If $p^\ast_0 = - \infty$, then the inequality in \eqref{upper_viscosity_derivatives_along_extension} holds automatically.
        So, we can assume that $p^\ast_0 > - \infty$.
        Set $p \triangleq s$.
        Then, for all $p_0 \in \mathbb{R}$ with $p_0 \leq p^\ast_0$, we have $(p_0, p) \in D^- \varphi(t, x(\cdot))$ (see \eqref{subdifferential_via_directional}), and, hence, we get $p_0 \leq - H(t, x(\cdot), p)$ by \eqref{upper_viscosity}.
        Therefore, we conclude that $p^\ast_0 < + \infty$, and, choosing $p_0 \triangleq p^\ast_0$, we obtain the inequality in \eqref{upper_viscosity_derivatives_along_extension}.

        Now, let us prove that \eqref{upper_viscosity_derivatives_along_extension} implies the condition from Proposition \ref{proposition_test_functionals}.
        Let $\psi \colon G \to \mathbb{R}$ be a $ci$-smooth functional such that, for every function $z(\cdot) \in \Lip (t, x(\cdot))$, there exists $\delta_z \in (0, T - t]$ for which \eqref{upper_viscosity_test_functions_condition} holds.
        Let us put $s \triangleq \nabla \psi(t, x(\cdot))$ and fix $\varepsilon > 0$.
        Then, using \eqref{upper_viscosity_derivatives_along_extension} and taking \eqref{varphi_s} and \eqref{derivatives_along_extenstion} into account, we can choose $z(\cdot) \in \Lip(t, x(\cdot))$ satisfying the inequality
        \begin{multline*}
            \liminf_{\tau \downarrow t} \frac{\varphi(\tau, z_\tau(\cdot)) - \varphi(t, x(\cdot))
            - \langle \nabla \psi(t, x(\cdot)), z(\tau) - x(t) \rangle}{\tau - t} \\
            + H \bigl( t, x(\cdot), \nabla \psi(t, x(\cdot)) \bigr)
            \leq \varepsilon.
        \end{multline*}
        Consequently, relying on \eqref{upper_viscosity_test_functions_condition}, we derive
        \begin{multline*}
            \liminf_{\tau \downarrow t} \frac{\psi(\tau, z_\tau(\cdot)) - \psi(t, x(\cdot))
            - \langle \nabla \psi(t, x(\cdot)), z(\tau) - x(t) \rangle}{\tau - t} \\
            + H \bigl( t, x(\cdot), \nabla \psi(t, x(\cdot)) \bigr)
            \leq \varepsilon,
        \end{multline*}
        wherefrom, due to $ci$-differentiability of $\psi$ at $(t, x(\cdot))$ (see \eqref{ci_derivatives}), we get
        \begin{equation*}
            \partial_t \psi(t, x(\cdot)) + H \bigl( t, x(\cdot), \nabla \psi(t, x(\cdot)) \bigr)
            \leq \varepsilon.
        \end{equation*}
        Since this estimate is valid for any $\varepsilon > 0$, we obtain \eqref{upper_viscosity_test_functions_statement}.

        Finally, let us assume that the condition from Proposition \ref{proposition_test_functionals} holds and verify \eqref{upper_viscosity}.
        Take $(p_0, p) \in D^- \varphi(t, x(\cdot))$, fix $\varepsilon > 0$, and consider the functional $\psi \colon G \to \mathbb{R}$ given by
        \begin{equation} \label{psi_test}
            \psi(\tau, y(\cdot))
            \triangleq (p_0 - \varepsilon) (\tau - t) + \langle p, y(\tau) - x(t) \rangle,
            \quad \forall (\tau, y(\cdot)) \in G.
        \end{equation}
        Note that $\psi$ is $ci$-smooth, $\psi(t, x(\cdot)) = 0$, and, in addition, $\partial_t \psi(t, x(\cdot)) = p_0 - \varepsilon$ and $\nabla \psi (t, x(\cdot)) = p$.
        It follows from \eqref{subdifferential} that, for every function $z(\cdot) \in \Lip(t, x(\cdot))$, we can choose $\delta_z \in (0, T - t]$ such that
        \begin{equation*}
            \varphi(\tau, z_\tau(\cdot)) - \varphi(t, x(\cdot)) - \langle p, z(\tau) - x(t) \rangle
            \geq (p_0 - \varepsilon) (\tau - t),
            \quad \forall \tau \in [t, t + \delta_z],
        \end{equation*}
        which yields \eqref{upper_viscosity_test_functions_condition} with the functional $\psi$ from \eqref{psi_test}.
        Hence, based on the assumption made (see \eqref{upper_viscosity_test_functions_statement}), we conclude that $p_0 - \varepsilon + H(t, x(\cdot), p) \leq 0$.
        Since this estimate holds for any $\varepsilon > 0$, we get the inequality in \eqref{upper_viscosity}.

        Thus, Propositions \ref{proposition_criteria_viscosity} and \ref{proposition_test_functionals} are proved.

    \subsection{Proof of Proposition \ref{proposition_finite_derivatives}}
    \label{appendix_proofs_2}
        According to \eqref{derivatives_along_extenstion}, we can find a sequence $\{\tau^{(k)}\}_{k = 1}^\infty \subset (t, T]$ for which, as $k \to \infty$,
        \begin{equation}\label{proposition_finite_derivatives_2}
            \tau^{(k)}
            \to t,
            \quad \frac{\varphi(\tau^{(k)}, z_{\tau^{(k)}}(\cdot)) - \varphi(t, x(\cdot))}{\tau^{(k)} - t}
            \to \partial^- \varphi(t, x(\cdot); z(\cdot)).
        \end{equation}
        For every $k \in \mathbb{N}$, let us denote $l_k \triangleq (z(\tau^{(k)}) - x(t)) / (\tau^{(k)} - t)$.
        Since $z(\cdot) \in \Lip(t, x(\cdot))$, let us choose $\lambda_z > 0$ from the condition $\|z(\tau) - x(t)\| \leq \lambda_z (\tau - t)$ for all $\tau \in [t, T]$.
        In particular, we have $\|l_k\| \leq \lambda_z$ for all $k \in \mathbb{N}$, and, therefore, we can assume that there exists $l \in \mathbb{R}^n$ such that $\|l\| \leq \lambda_z$ and $l_k \to l$ as $k \to \infty$.
        Consider the corresponding functions $z^{[l_k]}(\cdot) \in \Lip(t, x(\cdot))$ for all $k \in \mathbb{N}$ and $z^{[l]}(\cdot) \in \Lip(t, x(\cdot))$, defined by \eqref{z^l}.
        Now, let us take $\alpha \triangleq \|x(\cdot)\|_{[-h, t]} + \lambda_z (T - t) > 0$ and, using the assumption $\varphi \in \Phi_{\Lip}$, choose $\lambda_\varphi > 0$ for which condition \eqref{Phi_Lip} is valid.
        Then, for every $k \in \mathbb{N}$, noting that $(\tau^{(k)}, z_{\tau^{(k)}}(\cdot))$, $(\tau^{(k)}, z^{[l]}_{\tau^{(k)}}(\cdot)) \in G(\alpha)$, we derive
        \begin{align*}
            & |\varphi(\tau^{(k)}, z_{\tau^{(k)}}(\cdot)) - \varphi(\tau^{(k)}, z^{[l]}_{\tau^{(k)}}(\cdot))| \\
            & \quad \leq \lambda_\varphi \biggl( (1 + T - \tau^{(k)}) \|z(\tau^{(k)}) - z^{[l]}(\tau^{(k)})\| + \int_{t}^{\tau^{(k)}} \|z(\xi) - z^{[l]}(\xi)\| \, \rd \xi \biggr) \\
            & \quad \leq \lambda_\varphi \biggl( (1 + T - t) (\tau^{(k)} - t) \|l_k - l\| + 2 \lambda_z \int_{t}^{\tau^{(k)}} (\xi - t) \, \rd \xi \biggr) \\
            & \quad = \lambda_\varphi ((1 + T - t) \|l_k - l\| + \lambda_z (\tau^{(k)} - t)) (\tau^{(k)} - t)
        \end{align*}
        and, consequently,
        \begin{multline*}
            \frac{\varphi(\tau^{(k)}, z^{[l]}_{\tau^{(k)}}(\cdot)) - \varphi(t, x(\cdot))}{\tau^{(k)} - t} \\
            \leq \frac{\varphi(\tau^{(k)}, z_{\tau^{(k)}}(\cdot)) - \varphi(t, x(\cdot))}{\tau^{(k)} - t}
            + \lambda_\varphi ((1 + T - t) \|l_k - l\| + \lambda_z (\tau^{(k)} - t)),
        \end{multline*}
        which, in accordance with \eqref{proposition_finite_derivatives_2}, implies \eqref{proposition_finite_derivatives_0}.
        The proof is complete.

    \subsection{Proofs of Propositions \ref{proposition_value_Phi} and \ref{proposition_existence_Lip}}
    \label{appendix_proofs_3}

        Let us first prove
        \begin{propositionA} \label{value_x_continuity}
            Let Assumption \ref{assumption_optimal_control} hold.
            Then, for any $\alpha > 0$ and any $\varepsilon > 0$, there exists $\delta > 0$ such that $|\varphi_0(t, x(\cdot)) - \varphi_0(t, y(\cdot))| \leq \varepsilon$ for all $(t, x(\cdot))$, $(t, y(\cdot)) \in G(\alpha)$ with
            \begin{equation} \label{value_x_continuity_condition}
                \rho_1 \bigl( (t, x(\cdot)), (t, y(\cdot)) \bigr)
                \leq \delta.
            \end{equation}
        \end{propositionA}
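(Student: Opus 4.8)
The plan is to use the standard trajectory-comparison argument for value functionals. Fix $\alpha > 0$ and $\varepsilon > 0$, fix the initial time $t \in [0, T]$, and take two histories with $(t, x(\cdot)), (t, y(\cdot)) \in G(\alpha)$ and $\rho_1((t, x(\cdot)), (t, y(\cdot))) \leq \delta$, where $\delta > 0$ is to be chosen at the end. For an arbitrary admissible control $u(\cdot) \in \mathcal{U}(t)$, let $z^x(\cdot)$ and $z^y(\cdot)$ denote the corresponding system motions of \eqref{system} issuing from $x(\cdot)$ and $y(\cdot)$. First I would invoke the growth condition (ii) of Assumption \ref{assumption_optimal_control} together with Gronwall's inequality to obtain an a priori bound $\|z^x(\cdot)\|_{[- h, T]}, \|z^y(\cdot)\|_{[- h, T]} \leq R$ with $R = R(\alpha)$ depending only on $\alpha$, $c_{f, \chi}$, and $T$. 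In particular $(\tau, z^x_\tau(\cdot)), (\tau, z^y_\tau(\cdot)) \in G(R)$ for all $\tau \in [t, T]$, so that the local Lipschitz condition (iii) applies with the constant $\lambda = \lambda_{f, \chi}$ associated to $\alpha' = R$.

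The core of the argument is an estimate of the deviation $w(\tau) \triangleq z^x(\tau) - z^y(\tau)$. Writing the integral form of \eqref{system} and subtracting gives
\begin{equation*}
    \|w(\tau)\|
    \leq \|x(t) - y(t)\|
    + \lambda \int_t^\tau \Bigl( \|w(\xi)\| + \|w(\xi - h)\| + \int_{- h}^\xi \|w(\eta)\| \, \rd \eta \Bigr) \, \rd \xi,
    \quad \tau \in [t, T].
\end{equation*}
Here the delay term $\|w(\xi - h)\|$ is the main obstacle: for $\xi - h < t$ it equals the pointwise deviation $\|x(\xi - h) - y(\xi - h)\|$ of the initial histories, which the metric $\rho_1$ controls only through its $L^1$-norm and its endpoint value at $t$, not pointwise. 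The resolution is that this term enters solely under an integral sign, so the change of variables $\eta = \xi - h$ yields $\int_t^\tau \|w(\xi - h)\| \, \rd \xi = \int_{t - h}^{\tau - h} \|w(\eta)\| \, \rd \eta$, whose part over $[t - h, t]$ is a piece of $\int_{- h}^t \|x(\eta) - y(\eta)\| \, \rd \eta$ and hence bounded by $\rho_1((t, x(\cdot)), (t, y(\cdot)))$. The inner integral $\int_{- h}^\xi \|w(\eta)\| \, \rd \eta$ splits likewise into a $\rho_1$-controlled history part and a part over $[t, \xi]$. After these manipulations the estimate takes the Gronwall form $\|w(\tau)\| \leq C_1 \rho_1((t, x(\cdot)), (t, y(\cdot))) + C_2 \int_t^\tau \|w(\eta)\| \, \rd \eta$ with constants depending only on $\alpha$, $T$, and $h$, whence $\sup_{\tau \in [t, T]} \|w(\tau)\| \leq C \delta$.

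It then remains to transfer this trajectory bound to the cost functional. From \eqref{cost_functional} I would split $J(t, x(\cdot), u(\cdot)) - J(t, y(\cdot), u(\cdot))$ into the terminal difference $\sigma(z^x(\cdot)) - \sigma(z^y(\cdot))$ and the running-cost difference $\int_t^T (\chi(\tau, z^x_\tau(\cdot), u(\tau)) - \chi(\tau, z^y_\tau(\cdot), u(\tau))) \, \rd \tau$. The running term is bounded by condition (iii) and the deviation estimate above, integrating the delay term exactly as before, by a quantity $C' \delta$. For the terminal term I note that $\rho_1((T, z^x(\cdot)), (T, z^y(\cdot))) = \|w(T)\| + \int_{- h}^T \|w(\eta)\| \, \rd \eta \leq C'' \delta$, so condition (iv) of Assumption \ref{assumption_optimal_control}, applied on $G(R)$ with tolerance $\varepsilon / 2$, gives $|\sigma(z^x(\cdot)) - \sigma(z^y(\cdot))| \leq \varepsilon / 2$ once $\delta$ is taken small enough that $C'' \delta \leq \delta_\sigma$ and simultaneously $C' \delta \leq \varepsilon / 2$. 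Combining the two bounds yields $|J(t, x(\cdot), u(\cdot)) - J(t, y(\cdot), u(\cdot))| \leq \varepsilon$ uniformly in $u(\cdot) \in \mathcal{U}(t)$, and passing to the infimum over admissible controls in \eqref{value} gives $|\varphi_0(t, x(\cdot)) - \varphi_0(t, y(\cdot))| \leq \varepsilon$, as required. The main difficulty throughout is the careful bookkeeping of the delay term, making sure that only $\rho_1$-controlled (i.e. $L^1$ and endpoint) quantities of the initial histories ever appear.
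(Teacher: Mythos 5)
Your proof is correct and follows essentially the same route as the paper's: an a priori bound on the motions, a $\rho_1$-Lipschitz estimate for the trajectory deviation and the running-cost difference under a common control, and the uniform continuity of $\sigma$ (condition (iv)) applied to the terminal term, with the delay term controlled exactly because it enters only under integrals. The only differences are cosmetic: the paper cites the two trajectory estimates from \cite{Plaksin_2020_JOTA} and \cite{Plaksin_2021_SIAM} rather than deriving them via Gronwall as you do, and it passes to the value functional through a near-optimal control ($\varepsilon/3$-argument) instead of your uniform-in-$u(\cdot)$ bound followed by taking the infimum, which is equivalent.
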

        \begin{proof}
            Let $\alpha > 0$ be fixed.
            Due to conditions (i)--(iii) of Assumption \ref{assumption_optimal_control}, the following two statements can be verified by the scheme from, e.g., \cite[Propositions 3.1 and 3.2]{Plaksin_2020_JOTA} (see also \cite[Lemmas 4.1 and 4.2]{Plaksin_2021_SIAM}):

            \smallskip

            \noindent (i)
                There exists $\alpha_\ast \geq \alpha$ such that, for every $(t, x(\cdot)) \in G(\alpha)$ and every $u(\cdot) \in \mathcal{U}(t)$, the corresponding motion $z(\cdot) \triangleq z(\cdot; t, x(\cdot), u(\cdot))$ of system \eqref{system}, \eqref{initial_condition} satisfies the inclusion $(\tau, z_\tau(\cdot)) \in G(\alpha_\ast)$ for all $\tau \in [t, T]$.

            \smallskip

            \noindent (ii)
                There exists $\lambda_\ast > 0$ such that, for any $(t, x(\cdot))$, $(t, y(\cdot)) \in G(\alpha)$ and any $u(\cdot) \in \mathcal{U}(t)$, the corresponding system motions $z(\cdot) \triangleq z(\cdot; t, x(\cdot), u(\cdot))$ and $w(\cdot) \triangleq z(\cdot; t, y(\cdot), u(\cdot))$ satisfy the estimate
                \begin{multline*}
                    \rho_1 \bigl( (T, z(\cdot)), (T, w(\cdot)) \bigr) +
                    \biggl| \int_t^T \bigl( \chi(\xi, z_\xi(\cdot), u(\xi)) - \chi(\xi, w_\xi(\cdot), u(\xi)) \bigr) \, \rd \xi \biggr| \\
                    \leq \lambda_\ast \rho_1 \bigl( (t, x(\cdot)), (t, y(\cdot)) \bigr).
                \end{multline*}

            Let $\varepsilon > 0$ be fixed.
            Using condition (iv) of Assumption \ref{assumption_optimal_control}, let us take $\delta_\sigma > 0$ such that the inequality $|\sigma(z(\cdot)) - \sigma(w(\cdot))| \leq \varepsilon / 3$ is valid for all $(T, z(\cdot))$, $(T, w(\cdot)) \in G(\alpha_\ast)$ with $\rho_1 ((T, z(\cdot)), (T, w(\cdot))) \leq \delta_\sigma$.
            Let us choose $\delta > 0$ from the conditions
            \begin{equation} \label{value_x_continuity:delta}
                \delta
                \leq \delta_\sigma / \lambda_\ast,
                \quad \delta
                \leq \varepsilon / (3 \lambda_\ast).
            \end{equation}

            Now, let $(t, x(\cdot))$, $(t, y(\cdot)) \in G(\alpha)$ and let \eqref{value_x_continuity_condition} be fulfilled.
            Let us show that
            \begin{equation} \label{value_x_continuity:statement}
                \varphi_0(t, y(\cdot)) - \varphi_0(t, x(\cdot))
                \leq \varepsilon.
            \end{equation}
            By definition \eqref{value} of the value functional $\varphi_0$, there exists $u(\cdot) \in \mathcal{U}(t)$ such that, for the system motion $z(\cdot) \triangleq z(\cdot; t, x(\cdot), u(\cdot))$, it holds that
            \begin{equation} \label{value_x_continuity:y}
                \varphi_0(t, x(\cdot))
                \geq \sigma(z(\cdot)) + \int_t^T \chi(\xi, z_\xi(\cdot), u(\xi)) \, \rd \xi - \varepsilon / 3.
            \end{equation}
            In addition, for the system motion $w(\cdot) \triangleq z(\cdot; t, y(\cdot), u(\cdot))$, we have
            \begin{equation} \label{value_x_continuity:y'}
                \varphi_0(t, y(\cdot))
                \leq \sigma(w(\cdot)) + \int_t^T \chi(\xi, w_\xi(\cdot), u(\xi)) \, \rd \xi.
            \end{equation}
            According to the choice of $\lambda_\ast$ and inequalities \eqref{value_x_continuity:delta}, we derive
            \begin{equation} \label{value_x_continuity:first}
                \rho_1 \bigl( (T, z(\cdot)), (T, w(\cdot)) \bigr)
                \leq \lambda_\ast \delta
                \leq \delta_\sigma
            \end{equation}
            and
            \begin{equation} \label{value_x_continuity:differences_chi}
                \biggl| \int_t^T \bigl( \chi(\xi, z_\xi(\cdot), u(\xi)) - \chi(\xi, w_\xi(\cdot), u(\xi)) \bigr) \, \rd \xi \biggr|
                \leq \lambda_\ast \delta
                \leq \varepsilon / 3.
            \end{equation}
            Since $(T, z(\cdot))$, $(T, w(\cdot)) \in G(\alpha_\ast)$, it follows from \eqref{value_x_continuity:first} and the choice of $\delta_\sigma$ that
            \begin{equation} \label{value_x_continuity:differences_sigma}
                |\sigma(z(\cdot)) - \sigma(w(\cdot))|
                \leq \varepsilon / 3.
            \end{equation}
            Thus, putting together \eqref{value_x_continuity:y}, \eqref{value_x_continuity:y'}, \eqref{value_x_continuity:differences_chi}, and \eqref{value_x_continuity:differences_sigma}, we obtain \eqref{value_x_continuity:statement}.
            The inequality $\varphi_0(t, x(\cdot)) - \varphi_0(t, y(\cdot)) \leq \varepsilon$ can be verified in a similar way.
        \end{proof}

        \begin{proof}[Proof of Proposition \ref{proposition_value_Phi}]
            Let $\alpha > 0$, and let a sequence $\{(t^{(k)}, x^{(k)}(\cdot))\}_{k = 1}^\infty \subset G(\alpha)$ and a point $(t, x(\cdot)) \in G$ be such that $\rho_1 ((t, x(\cdot)), (t^{(k)}, x^{(k)}(\cdot))) \to 0$ as $k \to \infty$.
            Note that $(t, x(\cdot)) \in G(\alpha)$ and $\rho_\infty ((t, x(\cdot)), (t^{(k)}, x_{t^{(k)}}(\cdot \wedge t^{(k)}))) \to 0$ as $k \to \infty$.
            Then, taking \eqref{rho_1_and_rho_infty} into account and applying Proposition A.1, we obtain
            \begin{equation}\label{value_is_viscosity_solution:2}
                |\varphi_0(t^{(k)}, x^{(k)}(\cdot)) - \varphi_0(t^{(k)}, x_{t^{(k)}}(\cdot \wedge t^{(k)}))|
                \to 0
                \text{ as } k \to \infty.
            \end{equation}
            Moreover, since the value functional $\varphi_0$ is continuous, we have
            \begin{equation}\label{value_is_viscosity_solution:1}
                \lim_{k \to \infty} \varphi_0(t^{(k)}, x_{t^{(k)}}(\cdot \wedge t^{(k)}))
                = \varphi_0(t, x(\cdot)).
            \end{equation}
            From \eqref{value_is_viscosity_solution:2} and \eqref{value_is_viscosity_solution:1}, we derive
            \begin{equation*}
                \lim_{k \to \infty} \varphi_0(t^{(k)}, x^{(k)}(\cdot))
                = \varphi_0(t, x(\cdot)),
            \end{equation*}
            which yields both \eqref{Phi_+} and \eqref{Phi_-}, and, thus, proves the inclusion $\varphi_0 \in \Phi_+ \cap \Phi_-$.
        \end{proof}

        Proposition \ref{proposition_existence_Lip} can be proved in the same way as Proposition A.1, but relying on Assumption \ref{assumption_sigma_Lipschitz_continuous} instead of condition (iv) of Assumption \ref{assumption_optimal_control}.

\biboptions{sort&compress}

\end{document}